\theoremstyle{definition}
\newtheorem{thm}{Theorem}[section]
\newtheorem*{thm*}{Theorem}
\newtheorem{defn}[thm]{Definition}
\newtheorem{lemma}[thm]{Lemma}
\newtheorem{cor}[thm]{Corollary}
\newcommand{\E}{\underline{E}_{n+1}^{\{1,3\}}}
\newcommand{\Ek}{\underline{E}_{k+1}^{\{1,3\}}}
\newcommand{\EE}{\underline{E}_{2^\omega}^{\{1,3\}}}
\newcommand{\F}{\mathcal{F}}
\newcommand{\sub}{\mathrm{Sb}(E)}
\newcommand{\Sub}{\underline{\mathrm{Sb}}(E)}
\newcommand{\RR}{\underline{\mathrm{Re}}}
\newcommand{\rr}{\mathrm{Re}}
\newcommand{\Id}{\mathrm{Id}}
\newcommand{\meet}{\wedge}
\newcommand{\conv}[1]{\breve{#1}}
\newcommand{\hh}{\textsf{\textbf{H}}}
\renewcommand{\ss}{\textsf{\textbf{S}}}
\newcommand{\sss}{\textsf{\textbf{S}}'}
\newcommand{\pp}{\textsf{\textbf{P}}}
\newcommand{\ii}{\textsf{\textbf{I}}}
\newcommand{\K}{\mathsf{K}}
\newcommand{\A}{\underline{A}}
\newcommand{\B}{\underline{B}}
\newcommand{\Q}{\underline{Q}}
\newcommand{\Fr}{\underline{\mathrm{Fr}}_\omega^\rho}
\newcommand{\Fra}{\underline{\mathrm{Fr}}_A^\rho}
\newcommand{\ds}{\displaystyle}
\newcommand{\ess}{\underline{S}} %%% \S is taken
\newcommand{\iso}{\cong}
\newcommand{\hg}{h^{-1}| \, g}
\newcommand{\1}{1\!\text{'}}
\newcommand{\0}{0\text{'}}
\newcommand{\C}{\mathcal{C}}
\newcommand{\Pt}{\text{Pt}_E}
\begin{document}
%auto-ignore
% Template Titlepage File
\title{On sets of first-order formulas axiomatizing representable relation algebras}
\author{Jeremy F. Alm}
\degree{MASTER OF SCIENCE} \major{Mathematics} \level{master's}
\mprof{Roger Maddux} 
% \members{Maria Axenovich \\ Paul Sacks \\
% Jonathan D. H. Smith \\ William Robinson}  
\members{Clifford Bergman \\ Howard Levine}
%\notice
% Add these additional lines for a Doctoral Dissertation
% \degree{DOCTOR OF PHILOSOPHY} \level{doctoral}
%format{dissertation}
%\committee{4}
%\members{Maria Axenovich \\ Paul Sacks \\ Jonathan D.
%H. Smith \\ William Robinson}
% Add these additional lines for a Creative Component
% - also comment out the \maketitle command
%\format{Creative Component}
%\submit{my graduate committee}
%\makenosigtitle
\date{2004}
\maketitle

\tableofcontents \addtocontents{toc}{\def\protect\@chapapp{}}
\cleardoublepage
%\listoftables
%\cleardoublepage
%\addcontentsline{toc}{chapter}{LIST OF FIGURES}
%\listoffigures
% Comment out the next line if NOT using chaptertitle
%\addtocontents{toc}{\def\protect\@chapapp{CHAPTER\ }}
%Optional thesis abstract
%\include{abstract}
\newpage
\pagenumbering{arabic}
%auto-ignore
% Introduction
\specialchapt{Preface}

\begin{center}
\footnotesize{\emph{The purpose of computing is insight, not numbers.} \\
\hspace{3in} --Richard Hamming \\

\bigskip

\emph{The pure mathematician knows that pure mathematics has an \\
end in itself which is more allied with philosophy.}\\
\hspace{1in} --Philip Jourdain, in the introduction to  Georg
Cantor's \\
\hspace{2in} \emph{Contributions to the Founding of the Theory of
Transfinite Numbers} }
\end{center}

The idea of a representation of an algebra is of great mathematical
interest, and indeed of philosophical interest, at least to
mathematicians. (Whether philosophers would exhibit an interest in
this more philosophical brand of mathematics I cannot say; whether
the public at large would take such an interest we can all say with
certainty.) Cayley's theorem is perhaps the most famous of the
representation theorems. It was, in a way, a great success: every
group was found actually to be isomorphic to a set of permutations
under the operation of functional composition.  So this first-order
formalization, the abstract algebraization of the notion of a set of
permutations, admits the isomorphic closure of the class of all such
sets of permutations.  That is, the sets of permutations could be
characterized formally, \emph{up to isomorphism}. And this is the
best that can be done with first-order languages, since they cannot
distinguish among isomorphic ``models."  It is only by our use of a
stronger language, our ``ordinary talk about mathematics", that we
distinguish among isomorphic non-identical models.  One wonders
whether there could (or should) be any formal logical system,
necessarily stronger than first-order predicate logic, that could
capture this distinction. Going further, De Morgan hoped for an
algebraic system that captured all the ``forms of thought." If this
seems almost na\"{i}ve, it is only because from our vantage point in
the history of mathematics there is much more to be seen than there
was in 1860. The whole development of mathematical logic and the
formalizations of set theory leave the author with a strong
impression of the indispensability of the full arsenal of our
mathematical language. It would seem that any ``model of thought"
designed for viewing the system from without, like first-order
languages, will always and necessarily be in some way inadequate.

Lest we be thought to despair: first-order logic provides for some
fascinating mathematics.  Indeed, it is the purpose of this thesis
to elucidate one small corner of that fascinating world.  We will
concern ourselves no longer with groups but with boolean algebras
with additional operations, those that generalize the composition
and conversion of binary relations.  Alfred Tarski laid out
algebraic axioms that hold in any field of binary relations, hoping
to find a first-order characterization of fields of binary relations
in the same way that sets of permutations were characterized by the
group axioms. It turned out that Tarski's axioms were insufficient,
as Roger Lyndon found a \emph{relation algebra} (an abstract algebra
satisfying Tarski's axioms) that was isomorphic to no field of
binary relations.  A natural question to ask at this point was, Did
Tarski ``forget" one?  Or two?  Could the definition of a relation
algebra be strengthened by adding an axiom or two (or several) so
that the models thereof would necessarily be isomorphic to fields of
binary relations?  Donald Monk answered this question negatively: no
finite axiom set would suffice. (Tarski had shown previously that
such an axiom set did indeed exist, but his proof did not speak to
the \emph{size} of such as set.)  So the algebras of ``real" binary
relations could be characterized, up to isomorphism, by algebraic
axioms, but only by infinitely many. This provides an interesting
contrast: our linguistic description of a field of binary relations
being fairly simple; a first-order characterization being
necessarily infinite, and being only a characterization up to
isomorphism, at that.  Thus the discrepancy between what we can
express using the full arsenal of our language and what we can
formalize in first-order logic is before us again.

It was stated above that the Cayley representation theorem was a
great success ``in a way."  There are two reasons for this qualified
statement.  Upon one of them we have already expounded. The other is
that the failure of a such a representation theorem to exist for
relation algebras, although it could be considered a disappointment,
admits so much interesting mathematics---mathematics that didn't
need to be developed for groups.  This ``failure" has inspired fifty
years of interesting mathematical research, some of which is
contained in these pages. Had Tarski's axioms ``succeeded" in the
way that the group axioms did, this thesis would not have been
written.  Of course, we must not go so far as to be \emph{glad} for
this ``failure"; we must take the world as it is, and not let our
romantic conceptions of what ``would be more interesting or less
interesting" skew our view of it.  But we can relish what we get,
and expect more and more interesting mathematics, for as long as
people are inclined to its study.

%auto-ignore
% Chapter 0 of the Thesis Template File
\chapter{Introduction}
\section{General algebra}
Let $I$ be a non-empty set, and let $\rho: I \longrightarrow\omega$
be a function. An \emph{algebra} $\A$ is a non-empty set $A$
together with functions $f_i : A^{\rho(i)} \longrightarrow A$, $i\in
I$. $\rho:I \longrightarrow \omega$ is the \emph{type} of $\A$.
Algebras are \emph{similar} if they have the same type.  Rings with
identity and fields are similar, for example, but rings and groups
are not (the operations do not ``match up"). So for a group
$\underline{G}$ we write $\underline{G}=\langle G, *, {}^{-1}, e
\rangle$; for a ring $\underline{R}$, write $\underline{R}=\langle
R, +, -, \cdot, 0, 1 \rangle$.  $A\subseteq B$ will denote ordinary
set inclusion, while $\A\subseteq\B$ will denote the subalgebra
relation.  For an algebra $\A$, and $X\subseteq A$, let
$\text{Sg}^{\A} (X)$ denote the subalgebra in $\A$ generated by $X$.
For a homomorphism between similar algebras we will always write
$h:\A \longrightarrow \B$ (as opposed to $h:A\longrightarrow B$).
While $h:A\longrightarrow B$ could mean any function between the
underlying sets of the algebras, $h:\A\longrightarrow \B$ will
\emph{always} denote a homomorphism. $|$ will stand for composition
of binary relations. (See Def. 2.1.1) For example,
$\A\iso|\subseteq\B$ means that $\A$ is isomorphic to a subalgebra
of $\B$, so in particular there is some $\underline{C}\subseteq\B$
with $\underline{C}\iso\A$.

We will often work with classes of similar algebras.  $\mathsf{V}$
will denote the class of all sets.  $\mathsf{Id}$ will denote the
universal identity relation, $\{\langle x,x \rangle :
x\in\mathsf{V}\}$.

For a class $\K$ of similar algebras, let
\begin{align*}
\ii\K &=\{\A : \A \text{ is isomorphic to some member of }\K\} \\
 \hh\K &=\{\A : \A \text{ is a homomorphic image of
some member of
} \K\} \\
\ss\K &= \{\A : \A \text{ is isomorphic to a subalgebra of some
member of }\K\} \\
\ss'\K &=\{\A : \A \text{ is a subalgebra of some member of } \K\}
\\
\pp\K &= \{\A : \A \text{ is isomorphic to a product of members of
}\K\}
\end{align*}

For a class $\K$, the following identities and inclusions hold:
\begin{align*}
\ii\ii\K &= \ii\K \\
\hh\hh\K &= \hh\K \\
\ss\ss\K &= \ss\K \\
\ss'\ss'\K &= \ss'\K \\
\ii\ss'\K &= \ss\K \\
\pp\pp\K &= \pp\K \\
\ss\hh\K &\subseteq \hh\ss\K \\
\pp\ss\K &\subseteq \ss\pp\K \\
\pp\hh\K &\subseteq \hh\pp\K \\
\end{align*}

The last three lines are not equalities since equality can fail to
hold for various classes $\K$.  Applying the operators $\hh,\ss,\pp$
to a class $\K$, $\hh\ss\pp\K$ yields the largest class.  For more
on class operators see chapter 0 of \cite{CA}.

An \emph{equational class} is a class $\K = \{ \A : \A \models
\Sigma \}$, where $\Sigma$ is a set of equations and $\models$
denotes satisfaction of formulas.  Birkhoff's theorem says that $\K$
is an equational class iff $\K=\hh\K=\ss\K=\pp\K$.  See the appendix
for more on Birkhoff's theorem.

We have a general correspondence between homomorphisms
$h:\A\longrightarrow \B$ and congruences $\mathcal{C}\subseteq
A\times A$.  Given $h:\A\longrightarrow \B$, let $\mathcal{C} =
h|h^{-1}\subseteq A\times A$, where $|$ denotes composition of
binary relations.  (See definition 2.1.1)  Then $\mathcal{C}$ is a
congruence relation. Conversely, given a congruence relation
$\mathcal{C}\subseteq A\times A$, let $h:\A\longrightarrow
\A/\mathcal{C}$ given by $a\in A \longmapsto a/\mathcal{C}$, where
$\A/\mathcal{C} = \{a/\mathcal{C} :  a \in A \}$ and $a/\mathcal{C}$
is the equivalence class of $a$.

For more on general algebra see \cite{BS}.

\section{Boolean algebra}

Boolean algebras are generalizations of algebras of sets under the
operations of union (generalized by +), intersection (generalized by
$\cdot$), complementation (generalized by $-$), and the constants
$\emptyset$ and $U$, the universe (generalized by 0 and 1).
 $\textsf{BA}$ is the class of all boolean algebras.  We write $\B
 = \langle B,+,\cdot, - , 0,1 \rangle$ for $\B\in\textsf{BA}$.
 $\B$ satisfies all of the following:

\pagebreak[3]
\begin{align*}
x+(y+z) &= (x+y)+z   &&\text{(assoc. of +)}\\
x\cdot (y\cdot z) &= (x\cdot y)\cdot z &&\text{(assoc. of $\cdot$)}\\
x+y &= y+x  &&\text{(comm. of +)}\\
x\cdot y &= y\cdot x &&\text{(comm. of $\cdot$)}\\
x\cdot (y+z) &= x\cdot y + x\cdot z &&\text{(dist. of
$\cdot$)}\\
x+(y\cdot z) &= (x+y)\cdot(x+z) &&\text{(dist. of +)}\\
x+(x\cdot y) &= x &&\text{(abs. of +)}\\
x\cdot(x+y) &= x &&\text{(abs. of $\cdot$)}\\
x+ \, -x &= 1  &&\text{(comp. of +)}\\
x\cdot \, -x &= 0  &&\text{(comp. of $\cdot$)}
\end{align*}
The order of operations is $-$, $\cdot$, $+$.   Each of these
axioms is easily seen to hold for algebras of sets, where $+$ is
interpreted as union, etc. Sometimes $+$ is called \emph{join},
and $\cdot$ \emph{meet}.

The two-element boolean algebra is called \emph{trivial}.  The
one-element boolean algebra is called \emph{degenerate}.  Every
non-degenerate boolean algebra has a homomorphism onto the trivial
algebra.

There is an alternate (but equivalent) definition, in which boolean
algebras are given fewer fundamental operations.  (See \cite{Madd}.)
Let $\B = \langle B,+,- \rangle$, and $\B$ satisfies

\begin{align*}
x+(y+z) &= (x+y)+z   &\text{(assoc. of +)}\\
x+y &= y+x  &\text{(comm. of +)}\\
x &= \overline{\bar{x} + \bar{y}} + \overline{\bar{x} + y}
&\text{(Huntington's axiom)}
\end{align*}
Now we can define $x\cdot y = \overline{\bar{x} + \bar{y}}$.  The
constants 0 and 1 can be defined by  $1 = x+ -x$ and $0 = x\cdot -x$
for any $x$.  (One can prove that $x+-x=y+-y$ for any $x,y$, so the
definitions of 0 and 1 makes sense.)  This first definition of
\textsf{BA}s given is a little more natural, while the second is
simpler.  We adopt the second. Here we have written $\bar{x}$
instead of $-x$. We will do this when it is convenient.

We define a partial order $\leq$ where $x\leq y$ iff $x+y=y$.
(Think of $X\subseteq Y$ iff $ X\cup Y = Y$ for sets.)
 $+, \cdot$ are monotone, i.e. $x\leq y \Rightarrow x+z\leq y+z$.

We can also define the \emph{symmetric difference}, $x\vartriangle y
= x\cdot\bar{y} + \bar{x}\cdot y$. One can prove $ x\vartriangle y =
0 \iff x=y$.

An \emph{atom} is a minimal non-zero element in the partial
ordering.  An algebra is \emph{atomic} if every non-zero element has
an atom below it.  An algebra is said to be \emph{complete} if
arbitrary meets and joins exist, i.e. given an arbitrary $X\subseteq
B$, $\inf X $  and  $\sup X$ (with respect to the partial ordering)
exist. We write $\Sigma X = \sup X$ and $\prod X = \inf X$.  In a
 atomic boolean algebra, we can write every element as the
join of the atoms below it:
\[
x=\sum_{\substack{a\in\text{At }\B
\\ a \leq x}} a
\]
where $\text{At }\B$ is the set of all atoms of $\B$.

A homomorphism $h:\B \longrightarrow \B'$ is a map that preserves
$+$ and $-$.  A \emph{(boolean-algebraic) ideal} in an algebra $\B$
is a set $I\subseteq B$ such that $x,y \in I \Rightarrow x+y \in I$
and $y\in I, \ x\leq y \Rightarrow x\in I$.\footnote{A
\emph{set-theoretic ideal} is a set $I\subseteq \mathcal{P}(U)$ such
that $X,Y \in I \Rightarrow X\cup Y \in I$ and $Y\in I, \ X\subseteq
Y \Rightarrow X\in I$, where $\mathcal{P}(U)$ is the power set of
$U$.  Boolean-algebraic ideals are the abstract analogue of
set-theoretic ideals.  Despite the difference in cosmetics, they are
really ``the same."}  For a homomorphism $h$, let ker$\, h$ denote
the pre-image of 0 under $h$.  Then every kernel is an ideal, and
every ideal is the kernel of some homomorphism.  Given an ideal $I$,
$\mathcal{C} = \{\langle x,y \rangle : x\vartriangle y \in I \}$ is
a congruence relation on $\B$. Similarly, given a congruence
$\mathcal{C}$, the equivalence class of zero is an ideal.

For more on boolean algebra, see \cite{BA} and chapter 2 of
\cite{HH}.

\section{Ultraproducts}

A \emph{(set-theoretic) filter} $\mathcal{F}$ on a set $U$ is
$\mathcal{F}\subseteq \mathcal{P}(U)$ such that $X\in\mathcal{F},
X\subseteq Y \Rightarrow Y\in\mathcal{F}$ and $X,Y\in\mathcal{F}
\Rightarrow X\cap Y \in\mathcal{F}$.  An \emph{ultrafilter} is a
maximal proper filter.  $\mathcal{F}$ is an ultrafilter iff for
all $X\in\mathcal{P}(U)$, $X\in\mathcal{F}$ or $U\setminus X
\in\mathcal{F}$ but not both.

Let $\mathcal{F}$ be an ultrafilter on an indexing set $I$.  Let
$\prod_{i\in I} A_i$ be a product of sets, and $f,g \in \prod_{i\in
I} A_i$.  We define an equivalence relation $\sim_\mathcal{F}$ by
$f/\mathcal{F} = g/\mathcal{F}$) iff $\{i \in I : f(i)= g(i) \} \in
\mathcal{F}$. We will write the equivalence class of $f$ as $f/
\mathcal{F}$ rather than $f/ \sim_\mathcal{F}$.   Then

\[
\prod_{i\in I} A_i \Big / \mathcal{F} = \{f/\mathcal{F} : f \in
\prod A_i \}
\]

is called an \emph{ultraproduct}. Considering an ultrafilter to be
the collection of ``big" subsets of $I$, then $f\sim_\mathcal{F} g$
iff $f$ and $g$ agree ``almost everywhere," or on a ``big set."

If the sets $A_i$ are algebras instead of just sets (and then we
write $\A_i$), then we define the operations in the ultraproduct in
the usual way on representatives from the equivalence classes:
$(f/\mathcal{F})
* (g/\mathcal{F}) := (f*g)/\mathcal{F}$ for a binary operation
$*$ and where $(f*g)$ denotes the ``pointwise'' product.  Here is an
important result, known as Los' Lemma:

Let $\varphi$ be a first-order sentence.\footnote{For a definition
see \cite{CK}.  An example would be the group-theoretic sentence
$(\forall x )(\exists y)( x\cdot y = 1)$} Then
\[
\prod_{i\in I} \A_i \Big / \mathcal{F} \models \varphi \iff \{
i\in I : \A_i \models\varphi \} \in \mathcal{F}
\]
In particular, ultraproducts preserve satisfaction of sentences.

For more on the ultraproduct construction, see \cite{CK}.

%auto-ignore
% Chapter 2 of the Thesis Template File
\chapter{Relation algebras}

\renewcommand{\theenumi}{\roman{enumi}}

\section{Definition of RA}

In order to motivate the definition of a relation algebra, we
discuss those structures of which they are the abstract analogue.

\subsection{Algebras of relations}

Let Sb($X$) denote the power set of $X$, and let Re($X$) denote
the power set of $X\times X$.

\begin{defn} An \emph{algebra of binary relations} (or \emph{proper relation algebra}) is an algebra $\langle A,
\cup,\, \bar{\ }, \, |, \, {}^{-1}, \, \Id_E \rangle $, where $A
\subseteq \sub $ for a nonempty equivalence relation $E$, and
$\Id_E=\mathsf{Id}\cap E$. The operation $|$ , called \emph{relative
multiplication} or \emph{composition}, is given by $ R|S = \{\langle
x,z \rangle : \exists y \ \langle x,y \rangle \in R \meet \langle
y,z \rangle \in S \}$, and the operation ${}^{-1}$ is given by
$R^{-1} = \{ \langle x,y \rangle :
\langle y,x \rangle \in R \}$.\\

The set of all such algebras is denoted by $\textsf{PRA}$ for proper
relation algebra, and $\textsf{PRA} = \ss' \{\Sub : E= E|E^{-1} \}$.
Here and everywhere, $\Sub$ denotes the algebra
over the set $\sub$.\\
 \end{defn}
\begin{thm} A set  $E\subseteq U\times U$ (for some $U$) is an equivalence relation iff
$E=E|E^{-1}$.\end{thm}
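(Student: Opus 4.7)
The plan is to establish both directions by unpacking the definitions of $|$ and $^{-1}$, reading ``equivalence relation'' in the only sensible way here as reflexive, symmetric, and transitive on the field $\{x : \exists y(xEy \meet yEx)\}$ of $E$ (the condition $E = E|E^{-1}$ puts no pair $\langle u,u\rangle$ into $E$ for $u$ outside the field, so insisting on reflexivity on all of $U$ would make the theorem false).

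For the forward direction, assume $E$ is an equivalence relation. To show $E \subseteq E|E^{-1}$, given $\langle x,z \rangle \in E$ take $y := z$; reflexivity on the field gives $\langle z,z \rangle \in E$, and symmetry gives $\langle z,z \rangle \in E^{-1}$, so $\langle x,z \rangle \in E|E^{-1}$ by the definition of composition. For the reverse inclusion, suppose $\langle x,z \rangle \in E|E^{-1}$ and unpack to obtain a witness $y$ with $\langle x,y \rangle \in E$ and $\langle z,y \rangle \in E$; symmetry converts the latter to $\langle y,z \rangle \in E$, and then transitivity yields $\langle x,z \rangle \in E$.

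For the converse, I would extract symmetry for free from the general identity $(R|S)^{-1} = S^{-1}|R^{-1}$: specializing to $R = E$ and $S = E^{-1}$ gives $(E|E^{-1})^{-1} = E|E^{-1}$, so the hypothesis $E = E|E^{-1}$ forces $E^{-1} = E$, i.e., $E$ is symmetric. Substituting $E^{-1} = E$ back into the hypothesis collapses it to $E = E|E$, which is precisely transitivity: if $xEy$ and $yEz$, then $\langle x,z \rangle \in E|E = E$. Finally, reflexivity on the field follows in one more step: for any $x$ in the field there is some $y$ with $xEy$, and combining symmetry ($yEx$) with transitivity yields $xEx$.

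The only real ``trick'' is noticing that the converse-of-composition identity lets one peel off symmetry at once from the single equation $E = E|E^{-1}$, after which the hypothesis reduces to the familiar $E = E|E$ and the remaining work is routine unpacking of quantifiers; no relation-algebraic machinery is needed, since the identities involved are elementary facts about binary relations.
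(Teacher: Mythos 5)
Your proof is correct and follows essentially the same route as the paper's: both directions hinge on the same observations, namely that reflexivity over the field gives $E\subseteq E|E^{-1}$, and that applying the converse-of-composition identity to the hypothesis $E=E|E^{-1}$ immediately yields symmetry, after which the equation collapses to $E=E|E$ (transitivity). Your treatment is slightly more explicit about unpacking the definitions and about why reflexivity must be taken over the field rather than all of $U$, but the underlying argument is identical.
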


\begin{proof}
By definition, $E$ is an equivalence relation precisely when $E$
satisfies $E^{-1}=E$ (symmetry) and $E|E\subseteq E$ (transitivity).
So to prove the ``only if" direction, let $E$ be an equivalence
relation.  Then $E=E|E^{-1}\subseteq E$.  Also, if $xEy$ then
$xExEy$, since $E$ is reflexive over its field, and so we have
$E\subseteq E|E=E|E^{-1}$.  Thus $E=E|E^{-1}$ as desired.  For the
``if" direction, let $E=E|E^{-1}$. Then
$E^{-1}=(E|E^{-1})^{-1}=E|E^{-1}=E$, so $E$ is symmetric. Also,
$E|E=E|E^{-1}=E$, so $E$ is transitive.
\end{proof}

 Algebras of
the form $\RR (U)=\langle \text{Re}(U), \cup,\, \bar{\ }, \, |, \,
{}^{-1}, \, \Id_{U\times U} \rangle $ and their subalgebras are
called \emph{square} \textsf{PRA}s, since the boolean unit has the
form $U\times U$.

\subsection{Examples}

The following Hasse diagram shows the boolean structure of a
particular proper subalgebra of $\RR(\{x,y\})$.  This algebra is
\emph{square} (the boolean unit is of the form $U\times U$) but
not \emph{full} (it is not a power set algebra).

\bigskip

\begin{graph}(10,10)(-7,-5)
\roundnode{1}(0,5) \roundnode{div}(-3,0) \roundnode{id}(3,0)
\roundnode{0}(0,-5) \edge{1}{id} \edge{1}{div} \edge{0}{id}
\edge{0}{div} \nodetext{1}{$\{\langle x,x \rangle, \langle x,y
\rangle, \langle y,x \rangle, \langle y,y \rangle \}$}
\nodetext{id}{$\{\langle x,x \rangle,  \langle y,y \rangle \}$}
\nodetext{div}{$\{\langle x,y \rangle, \langle y,x \rangle \}$}
\nodetext{0}{$\varnothing$}
\end{graph}

\bigskip

\bigskip

Another example is $\Sub$, where $E=\{\langle x,x \rangle, \langle
y,y \rangle, \langle z,z \rangle \}$.  In this instance, the boolean
unit is also the relational identity.  This algebra is \emph{full}
but not \emph{square}.  See the following Hasse diagram:

\bigskip

%\drawgraphsfalse
\begin{graph}(12,12)(-7,-6)
\roundnode{1}(0,6) \roundnode{2}(-3,2) \roundnode{3}(0,2)
\roundnode{4}(3,2)  \roundnode{8}(-3,-2) \roundnode{9}(0,-2)
\roundnode{10}(3,-2) \roundnode{0}(0,-6) \edge{1}{2} \edge{1}{3}
\edge{1}{4} \edge{2}{8} \edge{2}{9} \edge{3}{8} \edge{3}{10}
\edge{4}{9} \edge{4}{10} \edge{0}{8} \edge{0}{9} \edge{0}{10}
\nodetext{1}{$\{\langle x,x \rangle, \langle y,y \rangle, \langle
z,z \rangle \}$} \nodetext{2}{$\{\langle x,x \rangle, \langle y,y
\rangle \}$} \nodetext{3}{$\{\langle x,x \rangle, \langle z,z
\rangle \}$} \nodetext{4}{$\{\langle y,y \rangle, \langle z,z
\rangle \}$} \nodetext{8}{$\{\langle x,x \rangle\}$}
\nodetext{9}{$\{\langle y,y \rangle \}$} \nodetext{10}{$\{\langle
z,z \rangle \}$} \nodetext{0}{$\varnothing$}
\end{graph}

\bigskip

 Any $\RR (U)$ can be written $\Sub$, where
$E=U\times U$.  It is an interesting theorem that any subalgebra
of an $\Sub$ is a subalgebra of a direct product of algebras of
the form $\RR (U)$.  Thus we have the following decomposition
theorem.

\begin{thm} $\ss\{\Sub : E=E|E^{-1} \} = \ss\pp\{\RR (U) : U\in \mathsf{V} \}$. \end{thm}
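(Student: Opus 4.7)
The plan is to prove both inclusions by way of a single structural observation: when $E$ is an equivalence relation on its field $V$, the algebra $\Sub$ decomposes as a direct product of square algebras $\RR(U_i)$ indexed by the equivalence classes $U_i$ of $E$.

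For the inclusion $\ss\{\Sub : E = E|E^{-1}\} \subseteq \ss\pp\{\RR(U) : U \in \mathsf{V}\}$, start with $\A \iso| \subseteq \Sub$ for some $E$ with $E = E|E^{-1}$. By Theorem 2.1.2, $E$ is an equivalence relation on its field $V$, and so $E = \bigsqcup_{i \in I}(U_i \times U_i)$ where $\{U_i\}_{i \in I}$ are the equivalence classes. Define
\[
\varphi : \Sub \longrightarrow \prod_{i \in I} \RR(U_i), \qquad \varphi(R) = \bigl(R \cap (U_i \times U_i)\bigr)_{i \in I}.
\]
I would then verify that $\varphi$ is a bijective homomorphism. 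Bijectivity is immediate because every subset of $E$ is the unique disjoint union of its intersections with the $U_i \times U_i$. Preservation of $\cup$, $\bar{\ }$, ${}^{-1}$, and $\Id_E$ is routine since each fibre $U_i \times U_i$ is closed under these operations. It follows that $\A$ embeds into a product of $\RR(U_i)$'s, placing $\A$ in $\ss\pp\{\RR(U) : U \in \mathsf{V}\}$.

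For the reverse inclusion, observe first that each $\RR(U) = \Sub$ with $E = U \times U$, an equivalence relation, so $\{\RR(U) : U \in \mathsf{V}\} \subseteq \{\Sub : E = E|E^{-1}\}$. Now to absorb an arbitrary product, given $\{\RR(U_i)\}_{i \in I}$, replace the $U_i$ by pairwise disjoint copies $U_i'$ (a harmless cosmetic change), put $V = \bigsqcup_i U_i'$ and $E = \bigsqcup_i (U_i' \times U_i') \subseteq V \times V$. Then $E$ is an equivalence relation, and the map $\varphi$ above witnesses
\[
\underline{\mathrm{Sb}}(E) \iso \prod_{i \in I} \RR(U_i') \iso \prod_{i \in I} \RR(U_i).
\]
Hence $\pp\{\RR(U) : U \in \mathsf{V}\} \subseteq \ii\{\Sub : E = E|E^{-1}\}$, and applying $\ss$ (which already absorbs $\ii$, since $\ss = \ii\ss'$) yields the desired containment.

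The main obstacle is verifying that $\varphi$ preserves relative multiplication. The containment $\varphi(R) \mid \varphi(S) \subseteq \varphi(R \mid S)$ (componentwise) is immediate. The reverse direction is the crux: if $\langle x,z \rangle \in (R \mid S) \cap (U_i \times U_i)$ with witness $y$ such that $\langle x,y \rangle \in R$ and $\langle y,z \rangle \in S$, then $\langle x,y \rangle, \langle y,z \rangle \in E$ forces $y$ to lie in the same equivalence class $U_i$ as both $x$ and $z$. Therefore $\langle x,y \rangle \in R \cap (U_i \times U_i)$ and $\langle y,z \rangle \in S \cap (U_i \times U_i)$, giving $\langle x,z \rangle \in \varphi(R)_i \mid \varphi(S)_i$. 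Everything else in the argument is bookkeeping around this partition structure.
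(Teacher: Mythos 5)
Your proof is correct and takes essentially the same approach as the paper, which also reduces the problem to showing $\ii\{\Sub : E=E|E^{-1}\} = \pp\{\RR(U) : U \in \mathsf{V}\}$ via the map $R \mapsto \langle R \cap (U_\alpha \times U_\alpha) : \alpha \in J\rangle$ and its inverse. You supply more detail than the paper does (notably the verification that composition is preserved because any witness $y$ must lie in the same equivalence class, and the disjointification of the $U_i$ for the reverse inclusion), but the underlying decomposition is identical.
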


\begin{proof}  We simply show $\ii\{\Sub : E=E|E^{-1} \} = \pp\{\RR (U) : U\in \mathsf{V} \}$.  Let $E=\bigcup_{\alpha\in J} (U_\alpha\times U_\alpha)$ where the
$U_\alpha \,$s are the disjoint equivalence classes of $E$.  Then
the maps

\begin{align*}
R\in \sub &\longmapsto \langle R\cap (U_\alpha \times U_\alpha) :
\alpha \in J
\rangle \\
\langle R_\alpha : \alpha \in J \rangle \in \prod_{\alpha\in J}
\rr (U_\alpha) &\longmapsto
\bigcup_{\alpha\in J} R_\alpha, \ \ \text{where the $R_\alpha \,$s are disjoint}\\
\end{align*}

establish the desired correspondence.
\end{proof}

Thus every proper relation algebra has a decomposition into a
subalgebra of a product of square algebras.

\subsection{Abstract relation algebras}

\begin{defn} A \emph{relation algebra} is an algebra $\A = \langle
A, +, -, \, ;, \, \conv{}, \, \1 \rangle$ that satisfies

\begin{align}
(x+y)+z &= x+(y+z) \\
x+y &= y+x \\
x &= \overline{\bar{x}+y} + \overline{\bar{x}+\bar{y}}\\
x;(y;z) &= (x;y);z \\
(x+y);z &= x;z + y;z \\
x;\1 &= x \\
\conv{\conv{x}} &= x \\
(x+y)\conv{} &= \conv{x} + \conv{y} \\
(x;y)\conv{} &= \conv{y};\conv{x} \\
\bar{y} + \conv{x};\overline{x;y} &= \bar{y}
\end{align}

The first three axioms say that $\A$ is a boolean algebra with
additional operations.  The remaining axioms are relational
identities that hold in every \textsf{PRA}, translated into the
abstract algebraic language. The class of all relation algebras is
denoted by \textsf{RA}.
\end{defn}

We can define a partial order $\leq$ on an \textsf{RA} by $x\leq y$
iff $x+y = y$ (iff $x\cdot y=x$).  Also, $x<y$ will mean $x\leq y$
and $x\neq y$.  An element $a$ is called an \emph{atom} if $a> 0$
and $(\forall x)(x< a \Rightarrow x=0)$.  An \textsf{RA} is called
\emph{atomic} if every nonzero element has an atom below it.  An
\textsf{RA} is called \emph{symmetric} if conversion is the identity
function $(x=\conv{x})$.  An \textsf{RA} is called \emph{integral}
if $ x=0$ or $y=0$ whenever $x;y=0$.  This last condition is
equivalent to the condition that $\1$ be an atom.  (See Th. 2.2.9)

\subsection{An example}

Consider the following \textsf{RA} with boolean structure as
follows:

\bigskip

\begin{graph}(12,12)(-7,-6)
\roundnode{1}(0,6) \roundnode{2}(-3,2) \roundnode{3}(0,2)
\roundnode{4}(3,2)  \roundnode{8}(-3,-2) \roundnode{9}(0,-2)
\roundnode{10}(3,-2) \roundnode{0}(0,-6) \edge{1}{2} \edge{1}{3}
\edge{1}{4} \edge{2}{8} \edge{2}{9} \edge{3}{8} \edge{3}{10}
\edge{4}{9} \edge{4}{10} \edge{0}{8} \edge{0}{9} \edge{0}{10}
\nodetext{1}{1} \nodetext{2}{$a+b$} \nodetext{3}{$a+c$}
\nodetext{4}{$b+c$} \nodetext{8}{$a$} \nodetext{9}{$b$}
\nodetext{10}{$c$} \nodetext{0}{0}
\end{graph}

\bigskip

All elements of this algebra are self-converse $(\conv{x}=x)$.
Relative multiplication is given by $x;y=x\cdot y$.  If such an
equational definition of ; is not available, composition for
finite \textsf{RA}s can be specified by a multiplication table on
the atoms.  For this example we have

\[
\begin{tabular}{r|lll}
; &   $a$ & $b$ & $c$ \\
\hline
 $a$  & $a$ & 0 & $0$ \\
 $b$  & 0 & $b$ & 0 \\
 $c$  & 0 & 0 & $c$ \\
\end{tabular}
\]

It is sufficient to specify $;$ on the atoms because in a finite
relation algebra every non-zero element is the join of all of the
(finitely many) atoms below it, and $;$ distributes over $+$.  (Of
course, each entry in the table should be the join of some atoms.)
For example, if $x=a+b$ and $y=b+c$, then to compute $x;y$ we do the
following:  $x;y=(a+b);(b+c)= a;b + a;c + b;b + b;c$.  Now each of
these terms in the sum can be read off the multiplication table for
atoms.

It is easy to see that this abstract relation algebra is isomorphic
to the second algebra given in 2.1.2, which is a \textsf{PRA}.

%%%%%%%%%%%%%%%%%%%%%%%%%%%%%%%%%%%%%%%%%%%%%%%%%%%%%%%%%%%%%%%%%%%%%%%%%%%%%%%%%%%%
\section{Arithmetic in RA}
We derive some useful results that hold in \textsf{RA}.  First we
show that left-distributivity of ; over $+$ follows from the
axioms.  Note that only right-distributivity is assumed
explicitly.

\begin{thm} $x;(y+z) = x;y + x;z$. \end{thm}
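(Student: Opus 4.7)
The plan is to reduce left-distributivity to the right-distributive axiom (5) by conjugating with conversion, exploiting axioms (7), (8), and (9). Left distributivity is the ``mirror image'' of right distributivity across the converse operation, and the converse axioms give us exactly the tools to flip a product and then flip back.

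Concretely, I would carry out the following chain of equalities. First, apply involution of conversion (axiom 7) to rewrite
\[
x;(y+z) = \conv{\conv{x;(y+z)}}.
\]
Next, use the anti-distributive law for conversion over composition (axiom 9) to obtain
\[
\conv{x;(y+z)} = \conv{(y+z)};\conv{x},
\]
and then use the distributive law of conversion over $+$ (axiom 8) to replace $\conv{(y+z)}$ by $\conv{y}+\conv{z}$. Now apply the assumed right-distributivity (axiom 5) to $(\conv{y}+\conv{z});\conv{x}$, yielding $\conv{y};\conv{x} + \conv{z};\conv{x}$. Finally, take the outer converse: apply axiom 8 to distribute it across the sum, then axiom 9 to each summand to swap the order and convert $\conv{x}, \conv{y}, \conv{z}$ back to $x,y,z$ via axiom 7. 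The result is $x;y + x;z$.

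I do not expect any real obstacle: every step is a direct application of one of the stated axioms, and the only conceptual move is recognizing that conversion is precisely the formal device that trades ``right'' for ``left.'' If anything deserves care, it is simply writing the chain cleanly so that each appeal to (7), (8), (9), and (5) is transparent, and making sure the two outer applications of $\conv{\,\cdot\,}$ cancel via (7) after everything is simplified.
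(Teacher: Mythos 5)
Your proposal is correct and is essentially identical to the paper's proof: both conjugate by conversion, apply axioms (1.9) and (1.8) to turn $x;(y+z)$ into $(\conv{y}+\conv{z});\conv{x}$, invoke right-distributivity (1.5), and then undo the conversion via (1.8), (1.9), and (1.7). No differences worth noting.
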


We use $(a;b)\,\conv{} = \conv{b};\conv{a}$ to ``turn around" the
composition, and use right-distributivity.

\begin{proof}
\begin{align*}
x;(y+z) &= \big([x;(y+z)]\,\conv{}\, \big)\, \conv{} \qquad \text{by (1.7)} \\
        &= \big((y+z)\,\conv{};\conv{x}\big)\conv{} \qquad \text{by (1.9)} \\
        &= \big((\conv{y}+\conv{z});\conv{x}\big)\,\conv{} \qquad \text{by (1.8)} \\
        &= \big(\conv{y};\conv{x} + \conv{z};\conv{x}\big)\conv{} \qquad \text{by (1.5)} \\
        &= \big((x;y)\,\conv{} + (x;z)\,\conv{}\,\big)\conv{} \qquad\text{by (1.9)} \\
        &= \big([x;y + x;z]\,\conv{}\,\big)\,\conv{} \qquad\text{by (1.8)} \\
        &= x;y + x;z \qquad \text{by (1.7)}
\end{align*}\qedhere

\end{proof}

\begin{thm}
  The operation $\conv{}$ is an automorphism of the boolean reduct
  of any $\A\in\textsf{RA}$. In particular,
  $\conv{\bar{x}}=\bar{\conv{x}}$ (or
  $-\conv{x}=(\bar{x})\,\conv{}\,)$.
\end{thm}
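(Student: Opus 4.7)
The plan is to rely on the two axioms that constrain conversion directly: axiom (1.7), which makes $\conv{}$ an involution (hence a bijection with itself as inverse), and axiom (1.8), which makes $\conv{}$ a homomorphism with respect to $+$. Together these already force $\conv{}$ to be an order automorphism of the boolean reduct, and from this the full conclusion --- automorphism of the boolean reduct, and in particular $\conv{\bar{x}} = \bar{\conv{x}}$ --- will follow by standard boolean-lattice reasoning.

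First I would deduce monotonicity: by (1.8), $x+y=y$ iff $\conv{x}+\conv{y}=\conv{y}$, so $x\leq y$ iff $\conv{x}\leq\conv{y}$. Next I would establish $\conv{1}=1$ and $\conv{0}=0$ by exploiting surjectivity of $\conv{}$. For any $w\in A$, rewrite $w=\conv{\conv{w}}$ using (1.7); then
\begin{align*}
\conv{1}+w &= \conv{1}+\conv{\conv{w}} = \conv{1+\conv{w}} = \conv{1}, \\
\conv{0}+w &= \conv{0}+\conv{\conv{w}} = \conv{0+\conv{w}} = \conv{\conv{w}} = w,
\end{align*}
so $w\leq\conv{1}$ and $\conv{0}\leq w$ for every $w$, hence $\conv{1}=1$ and $\conv{0}=0$.

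Since $\conv{}$ is an order isomorphism, it carries greatest lower bounds to greatest lower bounds, so in particular $\conv{x\cdot y} = \conv{x}\cdot\conv{y}$. Setting $y=\bar{x}$ and combining with (1.8) yields
\begin{align*}
\conv{x}+\conv{\bar{x}} &= \conv{x+\bar{x}} = \conv{1} = 1, \\
\conv{x}\cdot\conv{\bar{x}} &= \conv{x\cdot\bar{x}} = \conv{0} = 0.
\end{align*}
Uniqueness of complements in a boolean algebra then gives $\conv{\bar{x}}=\bar{\conv{x}}$, which together with (1.7) and (1.8) shows $\conv{}$ is a boolean automorphism.

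The one delicate step is securing meet-preservation, since meet is not a fundamental operation in the Huntington presentation adopted here: one cannot simply write $\conv{x\cdot y} = \conv{\overline{\bar{x}+\bar{y}}} = \overline{\bar{\conv{x}}+\bar{\conv{y}}} = \conv{x}\cdot\conv{y}$ without already assuming the very complementation-preservation we are trying to establish. The order-theoretic detour above sidesteps this circularity by characterizing meet purely as the greatest lower bound, a property that any order automorphism transports automatically.
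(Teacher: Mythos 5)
Your proof is correct, and it opens the same way the paper does: both arguments use axiom (1.7) to get bijectivity and axiom (1.8) to get monotonicity ($x\leq y$ iff $\conv{x}\leq\conv{y}$), and both then pin down $\conv{1}=1$ and $\conv{0}=0$ (the paper does this by applying monotonicity to $1\,\conv{}\leq 1$ to get $1=(1\,\conv{}\,)\,\conv{}\leq 1\,\conv{}$; your surjectivity computation $\conv{1}+w=\conv{1+\conv{w}}=\conv{1}$ is an equally valid alternative). Where you genuinely diverge is the last step. The paper never establishes meet-preservation en route: it derives $\conv{\bar{x}}=\bar{\conv{x}}$ from two applications of the one-sided boolean fact that $a+b=1$ implies $\bar{a}\leq b$, namely $\conv{x}+\conv{\bar{x}}=1$ gives $\bar{\conv{x}}\leq\conv{\bar{x}}$, and $x+(\bar{\conv{x}})\,\conv{}=1$ gives $\bar{x}\leq(\bar{\conv{x}})\,\conv{}$, whence monotonicity closes the sandwich. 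You instead first prove $\conv{x\cdot y}=\conv{x}\cdot\conv{y}$ by observing that an order automorphism preserves greatest lower bounds, and then invoke uniqueness of complements. Your route costs one extra lemma but yields meet-preservation as an explicit intermediate fact rather than as a corollary of complement-preservation at the end; the paper's route is slightly leaner but leaves $\cdot$-preservation implicit (it follows since $\cdot$ is definable from $+$ and $-$). Your closing remark about the circularity lurking in the Huntington presentation --- that one cannot push $\conv{}$ through $x\cdot y=\overline{\bar{x}+\bar{y}}$ without already knowing $\conv{}$ commutes with complementation --- is exactly the right thing to worry about, and your order-theoretic detour handles it cleanly.
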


\begin{proof}
The operation $\conv{}$ is bijective, since $\conv{\conv{x}} = x$.
$(x+y)\,\conv{} = \conv{x} + \conv{y}$ is axiom (1.8).
\[
x\leq y \iff x+y = y \iff \conv{x} + \conv{y} = (x+y)\,\conv{} =
\conv{y}  \iff \conv{x} \leq \conv{y}
\]
so conversion is order-preserving (or ``monotone").

Now since $\conv{}$ preserves order, we have $1\conv{}\leq 1$ and $1
= (1\conv{}\,)\,\conv{} \leq 1\conv{}$, so $1\conv{}=1$.  Also, we
have $0\,\conv{}\geq 0$ and $0 = (0\,\conv{}\,)\,\conv{} \geq
0\,\conv{}$, so $0\,\conv{} = 0$.  Hence $\conv{}$ preserves 0 and
1.

To prove $\conv{\bar{x}}=\bar{\conv{x}}$, first note that $x +
\bar{x} =1$, so $\conv{x} + \conv{\bar{x}} = (x+\bar{x})\,\conv{} =
1\conv{}=1$.  Therefore $\bar{\conv{x}} \leq \conv{\bar{x}}$.
Similarly, $\conv{x} + \bar{\conv{x}} = 1$, so by a parallel
argument $\bar{x} \leq(\bar{\conv{x}})\,\conv{}$.  By monotonicity
of conversion, we have $\conv{\bar{x}} \leq(\bar{\conv{x}})\,\conv{}
\ \conv{} = \bar{\conv{x}}$. Therefore
$\conv{\bar{x}}=\bar{\conv{x}}$.

Hence conversion is an automorphism of the boolean reduct.
\qedhere
\end{proof}

\begin{thm}
  If $x\leq y$ then  $x;z \leq y;z$ and
  $z;x \leq z;y$ \qquad (monotonicity of ; )
\end{thm}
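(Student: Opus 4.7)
The plan is to reduce monotonicity of $;$ to the distributive laws. Recall that by the definition of $\leq$ on a relation algebra, the assumption $x \leq y$ is equivalent to $x + y = y$. The goal $x;z \leq y;z$ will then be equivalent to $x;z + y;z = y;z$, and analogously on the right.

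For the first inequality, I would apply axiom (1.5), right-distributivity of $;$ over $+$, to compute
\[
y;z = (x+y);z = x;z + y;z,
\]
where the first equality uses $x+y=y$. This is exactly the statement $x;z \leq y;z$. For the second inequality, I would use left-distributivity, Theorem 2.2.1, which was just established, in place of axiom (1.5):
\[
z;y = z;(x+y) = z;x + z;y,
\]
which gives $z;x \leq z;y$.

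There is no real obstacle here; the two halves of the argument are symmetric and each is essentially one line once the appropriate distributive law is invoked. The only thing worth being careful about is that axiom (1.5) gives right-distributivity directly, whereas left-distributivity is not an axiom and must be cited from Theorem 2.2.1. That is why Theorem 2.2.1 was proved immediately before this result.
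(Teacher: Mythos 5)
Your proof is correct and follows essentially the same route as the paper's: both arguments rewrite $y;z$ as $(x+y);z$ using $x+y=y$ and then expand via right-distributivity (axiom (1.5)) for one side and the derived left-distributivity (Theorem 2.2.1) for the other. Your remark about why left-distributivity must be cited separately is exactly the point the paper is making by placing Theorem 2.2.1 first.
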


\begin{proof}
  Assume $x\leq y$.  Then $x;z \leq x;z + y;z = (x+y);z =
  y;z$.  Also, $z;x \leq z;x + z;y = z;(x+y)=z;y$.  \qedhere
\end{proof}

The following theorem is known as the \emph{Peircean Law}, after C.
S. Peirce (pronounced ``purse").

\begin{thm}
  $x;y\cdot\conv{z} = 0 \iff
y;z\cdot\conv{x} = 0$ \label{PL}
\end{thm}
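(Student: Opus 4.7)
The plan is to reduce the biconditional to a single ``rotation'' implication,
\[
  x;y\cdot\conv{z}=0 \ \Longrightarrow\ z;x\cdot\conv{y}=0,
\]
and then harvest both directions by iterated cyclic substitution in its statement. Two chained applications (first as stated, then with $(x,y,z)\mapsto(z,x,y)$) yield the forward direction $x;y\cdot\conv{z}=0\Rightarrow y;z\cdot\conv{x}=0$; one application with $(x,y,z)\mapsto(y,z,x)$ applied to $y;z\cdot\conv{x}=0$ yields the reverse. So the whole theorem collapses to proving the single rotation lemma above.

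To prove the rotation lemma I would first restate axiom (1.10) in its order form as $\conv{x};\overline{x;y}\le\bar{y}$. Assuming the hypothesis $x;y\cdot\conv{z}=0$, I rewrite it as $\conv{z}\le\overline{x;y}$ and multiply on the left by $\conv{x}$ using monotonicity of $;$ (Theorem 2.2.3), obtaining
\[
  \conv{x};\conv{z}\ \le\ \conv{x};\overline{x;y}\ \le\ \bar{y}.
\]
Next I invoke axiom (1.9) to rewrite the left-hand side as $(z;x)\,\conv{}$, and then apply conversion to both sides. Since conversion is an order-preserving automorphism of the boolean reduct (Theorem 2.2.2) and an involution by axiom (1.7), this yields $z;x\le\bar{\conv{y}}$, equivalently $z;x\cdot\conv{y}=0$, as required.

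No individual manipulation is hard; the one conceptual point worth flagging is recognising the 3-cycle symmetry of the relation ``$x;y\cdot\conv{z}=0$'' in $x,y,z$, so that a single rotation implication suffices rather than a symmetric pair. The only bookkeeping care needed is the interaction between $\conv{}$ and complementation, but that is already secured by Theorem 2.2.2, which identifies $\conv{\bar{y}}$ with $\bar{\conv{y}}$. Once the rotation lemma is in hand, the Peircean Law follows by pure relabelling, without any further computation in the algebra.
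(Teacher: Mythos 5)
Your proof is correct and is essentially the same argument as the paper's: both rest on axiom (1.10) in its order form, monotonicity of $;$, axiom (1.9), and the fact that conversion is an order-preserving boolean automorphism, and both exploit the $3$-cycle symmetry of the relation $x;y\cdot\conv{z}=0$ to get the biconditional from a single implication. The only difference is cosmetic: you establish the rotation $(x,y,z)\mapsto(z,x,y)$ (so the forward direction needs two applications), whereas the paper directly proves $(x,y,z)\mapsto(y,z,x)$ and relabels for the converse.
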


\begin{proof}
By (1.10), we have $\conv{a};\overline{a;b}\leq\bar{b}$ for any $a$
and $b$.

Suppose that $x;y\cdot \conv{z} = 0$.  This is equivalent to
$\overline{x;y}\geq \conv{z}$.  Then

$(\overline{x;y})\,\conv{}\geq z$ by Th 2.2.2, $ -(x;y)\,\conv{}
\geq z$ by Th 2.2.2 again, and $-(\conv{y};\conv{x})\geq z$ by
(1.9).  Finally we have $y;\overline{\conv{y};\conv{x}}\geq y;z$ by
monotonicity of ;.

%Hence $-(\conv{y};\conv{x})\geq z$.   Then let $a=\conv{y}$,
%$b=\conv{x}$ in (1.10), and we have
%$y;\overline{\conv{y};\conv{x}}\leq \bar{\conv{x}}$ by 1.7,
%equivalently $y;\overline{\conv{y};\conv{x}}\cdot \conv{x} = 0$.
%Then by monotonicity of ; we have $\overline{\conv{y};\conv{x}}\geq
%z \Rightarrow y;\overline{\conv{y};\conv{x}}\geq y;z$.

(1.10) tells us that $y;\overline{\conv{y};\conv{x}}\leq -\conv{x}$.
Combining the last two inequalities, we get $y;z \leq
y;\overline{\conv{y};\conv{x}}\leq -\conv{x}$, and hence $y;z\cdot
\conv{x}=0$.

 The converse follows by an alternate
assignment of the roles of $x,y,z$.  \qedhere
\end{proof}

\begin{thm} $ \1\,\conv{} = \1, \ \ \1;x=x, \ \ 0;x=0=x;0$

  Note that neither of the first two were assumed explicitly. ($\1$ was
  assumed to be a \emph{right} identity.)
\end{thm}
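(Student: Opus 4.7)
The plan is to prove the three identities in the order stated, each feeding the next. The workhorses are the right-identity axiom (1.6), the converse axioms (1.7)--(1.9), and, for the zero identities, the Peircean Law (Theorem 2.2.4) together with the facts $\conv{0}=0$ and $\conv{1}=1$ recorded inside the proof of Theorem 2.2.2.

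For $\conv{\1}=\1$ the key move is to take the converse of the equation $\conv{\1};\1=\conv{\1}$, which is just (1.6) applied to $\conv{\1}$. By (1.9) and (1.7) the left-hand side becomes $\conv{\1};\conv{\conv{\1}}=\conv{\1};\1$, while the right-hand side becomes $\1$; a second application of (1.6) then yields $\conv{\1}=\1$. With that in hand, for $\1;x=x$ I would push $x$ through two layers of conversion:
\[
x=\conv{\conv{x}}=\conv{\conv{x};\1}=\conv{\1};\conv{\conv{x}}=\1;x,
\]
using (1.7), then (1.6) applied to $\conv{x}$, then (1.9), then $\conv{\1}=\1$.

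For $0;x=0$ I would apply the Peircean Law, substituting $0$ for its $x$, $x$ for its $y$, and $1$ for its $z$: the right-hand side of the biconditional reduces to $x;1\cdot\conv{0}=x;1\cdot 0=0$, automatic in any boolean algebra, so the left-hand side $0;x\cdot\conv{1}=0;x$ must vanish as well. For $x;0=0$ I would take converses and reduce to the case just handled:
\[
x;0=\conv{\conv{x;0}}=\conv{\conv{0};\conv{x}}=\conv{0;\conv{x}}=\conv{0}=0.
\]
The only genuinely non-mechanical step in the whole proof is the opening move for $\conv{\1}=\1$: recognizing that converting the right-identity equation $\conv{\1};\1=\conv{\1}$ is exactly what turns (1.6) into the statement we want. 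Everything else is bookkeeping with the converse axioms and one appeal to the Peircean Law.
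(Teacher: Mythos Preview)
Your proof is correct and follows essentially the same route as the paper: the chain for $\conv{\1}=\1$ and for $\1;x=x$ is identical to the paper's up to the direction in which the equalities are read, and your Peircean-Law argument for $0;x=0$ matches the paper's almost verbatim. The only cosmetic difference is that for $x;0=0$ the paper says ``similar'' (i.e.\ another Peircean-Law instance), whereas you reduce it to $0;\conv{x}=0$ via conversion; both are equally short and valid.
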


\begin{proof}
$\1\,\conv{}=\1\,\conv{};\1= \1\,\conv{};(\1\,\conv{}\,)\,\conv{} =
(\1\,\conv{};\1)\,\conv{}  = \1\,\conv{}\ \conv{} = \1$.  Also,
$\1;x = (\conv{x};\1\,\conv{}\,)\,\conv{} = (\conv{x};\1)\,\conv{} =
\conv{\conv{x}}=x$. \\

To prove $0;x=0$, note that by the previous theorem we have
$y;x\cdot \conv{z}=0 \iff x;z\cdot \conv{y}=0$.  Now let $x$ be
arbitrary, let $y=0=\conv{0}$, and let $z=1=\conv{1}$.  Then we get
$0;x\cdot 1 = 0 \iff x;1 \cdot 0 = 0$.  Now the right side of this
biconditional is always true; therefore the left is also.  Thus $0;x
= 0;x \cdot 1 = 0$, as desired.  The derivation of $x;0=0$ is
similar.

\qedhere
\end{proof}

\begin{thm} A symmetric algebra is commutative.  \end{thm}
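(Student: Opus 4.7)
The plan is to use axiom (1.9), which states $(x;y)\conv{} = \conv{y};\conv{x}$, together with the defining property of symmetry. This is a one-line argument, so the only real task is to identify the right axiom; there is no obstacle to speak of.

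First I would recall that in a symmetric relation algebra we have $\conv{w} = w$ for every element $w$. In particular, $\conv{x} = x$, $\conv{y} = y$, and $(x;y)\conv{} = x;y$, since $x;y$ is itself an element of the algebra.

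Then I would simply apply axiom (1.9) and substitute:
\begin{align*}
x;y &= (x;y)\conv{} \qquad \text{(symmetry applied to $x;y$)} \\
    &= \conv{y};\conv{x} \qquad \text{by (1.9)} \\
    &= y;x \qquad \text{(symmetry applied to $x$ and $y$).}
\end{align*}
Hence $x;y = y;x$ for all $x,y$, so the algebra is commutative. The only thing to flag is that symmetry is being invoked three times, once on the composite $x;y$ and once each on $x$ and $y$; since the hypothesis $\conv{w}=w$ holds for \emph{every} element of the algebra, this causes no trouble.
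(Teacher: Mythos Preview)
Your proof is correct and is essentially the same as the paper's: both use the symmetry hypothesis together with axiom~(1.9) to obtain $x;y = (x;y)\,\conv{} = \conv{y};\conv{x} = y;x$. The paper inserts an extra step via (1.7), writing $x;y = (x;y)\,\conv{}\ \conv{}$ first before dropping one converse by symmetry, but your direct application of symmetry to $x;y$ is cleaner and equally valid.
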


\begin{proof}
  Suppose $\conv{x}=x$ for all $x$.  Then $x;y =
  (x;y)\conv{}\ \conv{} =
  (x;y)\,\conv{} \stackrel{(1.9)}{=}  \conv{y};\conv{x} =  y;x$. \qedhere
\end{proof}

\begin{thm}
  $ x\leq x;\conv{x};x$
\end{thm}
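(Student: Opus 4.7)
The plan is to prove the equivalent equation $x \cdot \overline{x;\conv{x};x} = 0$ by setting $z := x \cdot \overline{x;\conv{x};x}$ and showing that $z = 0$. The argument will combine the Peircean Law (Theorem 2.2.4), monotonicity of conversion and composition (Theorems 2.2.2 and 2.2.3), and axiom (1.10); the crux will be to reduce the task to showing that $z;\conv{z} = 0$ forces $z = 0$.

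First, the inclusion $z \leq \overline{x;\conv{x};x}$ says exactly that $(x;\conv{x});x \cdot z = 0$. Applying the Peircean Law to this equation---with the roles of ``$x$,'' ``$y$,'' ``$\conv{z}$'' in Theorem 2.2.4 played by $x;\conv{x}$, $x$, and $z$ respectively---yields
\[x;\conv{z} \cdot (x;\conv{x})\,\conv{} = 0,\]
and since $(x;\conv{x})\,\conv{} = x;\conv{x}$, this reads $x;\conv{z} \cdot x;\conv{x} = 0$. On the other hand, $z \leq x$ gives $\conv{z} \leq \conv{x}$ by Theorem 2.2.2 and therefore $x;\conv{z} \leq x;\conv{x}$ by monotonicity of composition. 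An element both below $x;\conv{x}$ and disjoint from $x;\conv{x}$ must vanish, so $x;\conv{z} = 0$; one more application of monotonicity then yields $z;\conv{z} \leq x;\conv{z} = 0$.

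The hard step is deducing $z = 0$ from $z;\conv{z} = 0$. For this I would invoke axiom (1.10) with $z$ in place of ``$x$'' and $\conv{z}$ in place of ``$y$,'' obtaining $\conv{z};\overline{z;\conv{z}} \leq \overline{\conv{z}}$. Since $z;\conv{z} = 0$, the complement $\overline{z;\conv{z}}$ equals $1$, so the inequality collapses to $\conv{z};1 \leq \overline{\conv{z}}$. Combining with $\conv{z} = \conv{z};\1 \leq \conv{z};1$ (from axiom (1.6) and monotonicity), we get $\conv{z} \leq \overline{\conv{z}}$, which forces $\conv{z} = 0$ and hence $z = 0$, completing the reduction and therefore the theorem.
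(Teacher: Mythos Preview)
Your argument is correct, but it follows a genuinely different path from the paper's.

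The paper first establishes the general identity
\[
x \cdot y;z \;=\; x \cdot y;(z \cdot \conv{y};x),
\]
by splitting $z$ as $z\cdot\conv{y};x + z\cdot\overline{\conv{y};x}$ and killing the second summand with axiom~(1.10); it then specializes to $y=x$, $z=\1$ to obtain $x = x\cdot x;\conv{x};x \leq x;\conv{x};x$ in one line. Your route instead names $z = x\cdot\overline{x;\conv{x};x}$, uses the Peircean Law to turn $x;\conv{x};x\cdot z = 0$ into $x;\conv{z}\cdot x;\conv{x}=0$, combines this with $x;\conv{z}\leq x;\conv{x}$ to get $z;\conv{z}=0$, and finishes by proving the auxiliary implication $z;\conv{z}=0\Rightarrow z=0$ from (1.10). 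Both approaches ultimately rest on (1.10) and monotonicity, but they package the work differently: the paper isolates a reusable ``modular-type'' identity (and indeed its form is close to what is needed later for $x;y=x\cdot y$ when $x,y\leq\1$), whereas your argument isolates the equally reusable fact that $z;\conv{z}=0$ forces $z=0$. Your proof is a bit longer but perfectly sound, and the intermediate fact you extract is itself worth knowing.
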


\begin{proof}
%$x;\conv{x}=0 \Longrightarrow x;\conv{x}\cdot \1 = 0
%\stackrel{(\ref{PL})}{\Longrightarrow} x=\1;x\cdot x = 0$.  So
%$x;\conv{x} \neq 0$ for $x\neq 0$.

%Suppose we have $a\neq 0$.  Let $0 < x \leq a$.  Then
%$x;\conv{x}\neq 0$.  By monotonicity, $x;\conv{x} \leq a;\conv{x}$
%and $x;\conv{x}\leq a;\conv{a}$.  Then
%$(a;\conv{x})\cdot(a;\conv{a})\geq x;\conv{x}\neq 0$.  By
%(\ref{PL}), $a;\conv{a};a\cdot x \neq 0$ $(\star)$.  Suppose $a\nleq
%a;\conv{a};a.$  Let $x=a \cdot\overline{a;\conv{a};a} \neq 0$.  But
%then $x\cdot a;\conv{a};a=0$, which denies $(\star)$.  So it must be
%that $a\leq a;\conv{a};a$. \qedhere

\begin{align*}
  x\cdot y;z &= x \cdot y;(z\cdot 1)\\
             &= x \cdot y;(z\cdot (\conv{y};x
             +\overline{\conv{y};x}))\\
             &= x \cdot y;(z\cdot \conv{y};x)
             + x \cdot y;(\overline{\conv{y};x}\cdot z))\\
             &= x \cdot y;(z\cdot \conv{y};x) + 0
\end{align*}
To justify the last step, $x \cdot y;(\overline{\conv{y};x}\cdot
z))=0$, note that by (1.10) we have $y;\overline{\conv{y};x}\leq
\bar{x}$ and hence $x \cdot y;(\overline{\conv{y};x}\cdot z))\leq
x\cdot\bar{x}=0$.

Now we use the above, reassigning the roles of $y$ and $z$: let
$y=x$, $z=\1$.  Then
\begin{align*}
  x &=x\cdot x;\1\\
    &=x\cdot x;(\1\cdot \conv{x};x) &&\text{by the above}\\
    &=x\cdot x;\conv{x};x\\
    &\leq x;\conv{x};x
\end{align*}
\end{proof}

\begin{thm}
  If $x,y\leq \1$, then $\conv{x}=x$ and $x;y=x\cdot y$.
\end{thm}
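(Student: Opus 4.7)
The plan is to use Theorem 2.2.7 ($x\leq x;\conv{x};x$) together with the identities from Theorem 2.2.5 ($\1;a = a = a;\1$, $\1\conv{} = \1$) and monotonicity of $;$ (Theorem 2.2.3). I will first handle the converse identity, then prove the two inequalities $x;y\leq x\cdot y$ and $x\cdot y\leq x;y$ separately.

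For $\conv{x} = x$ when $x\leq \1$: by Theorem 2.2.7, $x\leq x;\conv{x};x$. Since $x\leq \1$, monotonicity of $;$ together with $\1;a = a;\1 = a$ gives $x;\conv{x};x \leq \1;\conv{x};\1 = \conv{x}$, so $x\leq \conv{x}$. Because conversion is monotone and $\1\conv{} = \1$ (Theorem 2.2.2 and 2.2.5), $\conv{x}\leq \1$ as well, and the same argument applied with $\conv{x}$ in place of $x$ yields $\conv{x}\leq \conv{\conv{x}} = x$. Hence $\conv{x} = x$.

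For $x;y \leq x\cdot y$ when $x,y\leq \1$: by monotonicity of $;$, $x;y \leq x;\1 = x$ and $x;y \leq \1;y = y$, so $x;y \leq x\cdot y$.

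For the reverse inequality $x\cdot y \leq x;y$, I apply Theorem 2.2.7 to $x\cdot y$: since $x\cdot y \leq x \leq \1$, the first part of the theorem (already proved above) gives $\conv{(x\cdot y)} = x\cdot y$, so
\[
x\cdot y \leq (x\cdot y);\conv{(x\cdot y)};(x\cdot y) = (x\cdot y);(x\cdot y);(x\cdot y).
\]
Applying monotonicity of $;$ with $x\cdot y \leq x$, $x\cdot y \leq y$, and $x\cdot y\leq \1$, the right-hand side is bounded above by $x;y;\1 = x;y$. This gives $x\cdot y \leq x;y$, completing the proof.

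The only potentially tricky step is recognizing that Theorem 2.2.7 must be invoked twice — once on $x$ itself (to get $\conv{x} = x$) and then again on the element $x\cdot y$ (to get the nontrivial direction $x\cdot y \leq x;y$) — and that the first invocation feeds into the second by ensuring $\conv{(x\cdot y)} = x\cdot y$. Beyond that, everything reduces to monotonicity and the identity laws for $\1$.
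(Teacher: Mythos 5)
Your proof is correct and follows essentially the same route as the paper: both parts rest on Theorem 2.2.7 ($x\leq x;\conv{x};x$) together with monotonicity of $;$ and the identity laws for $\1$, with $x\cdot y\leq x;y$ obtained by applying 2.2.7 to the element $x\cdot y$. The only cosmetic differences are that the paper deduces $\conv{x}\leq x$ by applying conversion directly to $x\leq\conv{x}$ rather than rerunning the argument on $\conv{x}$, and it bounds the middle factor $(x\cdot y)\,\conv{}$ by $\1$ instead of first replacing it with $x\cdot y$.
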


\begin{proof}
By the previous theorem and monotonicity, $x \leq x;\conv{x};x \leq
\1;\conv{x};\1 = \conv{x}$.  So $x\leq\conv{x}$.  Then
$\conv{x}\leq\conv{\conv{x}}=x$, and $x=\conv{x}$.

For the second result, $x;y\leq \1;y = y$ and $x;y\leq x;\1=x$ by
monotonicity, so $x;y\leq x\cdot y$.  Also, by monotonicity and
the previous theorem $x\cdot y \leq (x\cdot y);(x\cdot
y)\,\conv{};(x\cdot y) \leq x;(x\cdot y)\,\conv{};y \leq
x;(\1;\1)\,\conv{};y = x;y$.  Hence $x;y=x\cdot y$. \qedhere
\end{proof}

\begin{thm}
  Let $\A\in\textsf{RA}$ be non-degenerate.  Then $\A$ is integral
  iff $\1\in \text{At}\, \A$.
\end{thm}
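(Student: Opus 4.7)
The plan is to prove both directions using results already established in this section, most crucially the Peircean Law (Th.~2.2.4), Th.~2.2.5 (identity laws and $\conv{\1} = \1$), and Th.~2.2.8 (sub-identity elements behave like a Boolean algebra under~$;$).

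For the ``only if'' direction, assume $\A$ is integral and non-degenerate. First I would rule out $\1 = 0$: if $\1 = 0$, then by Th.~2.2.5 every $x$ satisfies $x = \1;x = 0;x = 0$, contradicting non-degeneracy, so $\1 > 0$. Next, suppose for contradiction that there is some $x$ with $0 < x < \1$. Then $\bar{x}\cdot \1 > 0$, since $\bar x \cdot \1 = 0$ would force $\1 \leq x$ and hence $x = \1$. Both $x$ and $\bar x \cdot \1$ lie below $\1$, so Th.~2.2.8 applies and gives $x;(\bar x \cdot \1) = x \cdot \bar x \cdot \1 = 0$. But both factors are nonzero, contradicting integrality; hence nothing lies strictly between $0$ and $\1$, and $\1$ is an atom.

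For the ``if'' direction, assume $\1$ is an atom and suppose $x,y \neq 0$; the goal is $x;y \neq 0$. The key intermediate claim is that $\1 \leq \conv{x};x$ whenever $x \neq 0$. To prove it, consider $\conv{x};x \cdot \1$. Applying the Peircean Law with the triple $(\conv{x}, x, \1)$, together with $\conv{\1} = \1$ from Th.~2.2.5, yields
\[
\conv{x};x \cdot \1 = 0 \iff x;\1 \cdot \conv{\conv{x}} = 0,
\]
and the right-hand side simplifies via axioms (1.6) and (1.7) to $x\cdot x = x = 0$. Contrapositively, $x \neq 0$ forces $\conv{x};x \cdot \1 \neq 0$; being a nonzero element below $\1$, it must equal the atom $\1$, so $\1 \leq \conv{x};x$.

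With this in hand the conclusion is nearly immediate: using Th.~2.2.5, monotonicity (Th.~2.2.3), and associativity, $y = \1;y \leq (\conv{x};x);y = \conv{x};(x;y)$. If $x;y = 0$, then $\conv{x};(x;y) = \conv{x};0 = 0$ by Th.~2.2.5, forcing $y = 0$ and contradicting our assumption. The main obstacle I anticipate is spotting the right specialization of the Peircean Law: taking $z = \1$ collapses $\conv{x};x \cdot \1 = 0$ down to $x = 0$, which is precisely what allows $\1$ being an atom to promote the nonzero sub-identity $\conv{x};x\cdot\1$ all the way up to~$\1$. Once that observation is in place, the atom-versus-sub-identity dichotomy and the identity laws take care of the rest.
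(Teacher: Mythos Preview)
Your proof is correct and follows essentially the same approach as the paper: the forward direction is the same contrapositive via Th.~2.2.8, and the converse likewise pivots on the Peircean Law to obtain $\1 \leq \conv{x};x$ from $x \neq 0$. Your finish in the ``if'' direction is in fact slightly cleaner---the paper invokes the Peircean Law a second time (via $y \leq \conv{x};1$) to conclude $x;y \neq 0$, whereas you argue directly that $y \leq \conv{x};(x;y)$ forces $x;y \neq 0$.
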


\begin{proof}
First, show $\1 \notin \text{At } \A \Longrightarrow \A$ not
integral.

By hypothesis, $\exists x \ 0<x<\1$.  Let $y=\bar{x}\cdot \1$.
Note that $y\neq 0$.  Then $x;y=x\cdot y = x\cdot \bar{x} \cdot \1
= 0$.  So $\A$ is not integral.

Conversely, suppose $\1 \in \text{At } \A$.  We want $x;y\neq 0$
for $x\neq 0 \neq y$.

First we have $0\neq \conv{x}=\1;\conv{x}\cdot\conv{x}$.  By
(\ref{PL}), $\conv{x};x\cdot \1 \neq 0$.  Since $\1$ is an atom, we
have $\conv{x};x\geq \1$.  Therefore by monotonicity we have
$\1;y\leq (\conv{x};x);y\leq (\conv{x};1);1=\conv{x};1$.  So $y =
\1;y\leq \conv{x};1$, and $\conv{x};1\cdot y \neq 0$.  Then by
(\ref{PL}), $x;y\cdot 1 \neq 0$, and $x;y\neq 0$. \qedhere
\end{proof}

\begin{thm}
  $(1;x;1)\,\conv{}=1;x;1$
\end{thm}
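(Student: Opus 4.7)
The strategy is to split the proof into two steps: first, use the conversion axioms to rewrite $(1;x;1)\conv{}$ in a standard form; second, show that form equals $1;x;1$ using Theorem 2.2.7.

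\textbf{Step 1: Simplify the left side.} I would peel off the conversions using axiom (1.9) repeatedly, together with $\conv{1}=1$ (which follows by the same argument Theorem 2.2.2 uses to show $1\conv{}=1$: since $\conv{}$ is order-preserving, $\conv{1}\leq 1$ and $1=\conv{\conv{1}}\leq\conv{1}$). Combined with associativity of $;$ and axiom (1.7), this gives
\[
(1;x;1)\conv{} \;=\; \conv{1};\conv{x};\conv{1} \;=\; 1;\conv{x};1.
\]
This is a short, entirely mechanical computation.

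\textbf{Step 2: Show $1;x;1 = 1;\conv{x};1$.} This is the step where the real content lies, and it is the one I expect to be the main obstacle: naively there is no reason $x$ and $\conv{x}$ should produce equal products when sandwiched between $1$'s, so I need to exploit Theorem 2.2.7, which says $x\leq x;\conv{x};x$. Applying monotonicity of $;$ (Theorem 2.2.3) twice, and then associativity,
\[
1;x;1 \;\leq\; 1;(x;\conv{x};x);1 \;=\; (1;x);\conv{x};(x;1).
\]
Since $1$ is the boolean top we have $1;x\leq 1$ and $x;1\leq 1$, so monotonicity of $;$ yields $(1;x);\conv{x};(x;1)\leq 1;\conv{x};1$. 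Hence $1;x;1\leq 1;\conv{x};1$. Applying the same inequality with $\conv{x}$ in place of $x$ and using $\conv{\conv{x}}=x$ gives the reverse inequality $1;\conv{x};1\leq 1;x;1$, so the two are equal.

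\textbf{Conclusion.} Chaining Steps 1 and 2 gives $(1;x;1)\conv{} = 1;\conv{x};1 = 1;x;1$. The only nontrivial input is Theorem 2.2.7; everything else is axiomatic bookkeeping plus monotonicity. I do not anticipate any case analysis, and no appeal to the Peircean Law or to integrality is required.
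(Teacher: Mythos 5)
Your proposal is correct and takes essentially the same route as the paper's proof: both reduce the claim to $1;x;1 = 1;\conv{x};1$, establish the inequality $1;x;1 \leq 1;\conv{x};1$ from Theorem 2.2.7 together with monotonicity of $;$, and then obtain the reverse inequality by substituting $\conv{x}$ for $x$. The only difference is cosmetic (the paper first derives $x \leq 1;\conv{x};1$ and then sandwiches, while you distribute the outer $1$'s first), and you make explicit the conversion bookkeeping $(1;x;1)\,\conv{} = 1;\conv{x};1$ that the paper leaves implicit.
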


\begin{proof}

   $x \leq x;\conv{x};x \leq 1;\conv{x};1 $ by 2.2.7 and
   monotonicity of composition.  So $x\leq 1;\conv{x};1$.  Again by monotonicity , $1;x;1\leq
1;1;\conv{x};1;1=1;\conv{x};1$.  Hence $1;x;1\leq 1;\conv{x};1$
$(\spadesuit)$. By letting $\conv{x}$ take the role of $x$ in
$(\spadesuit)$, we get $1;\conv{x};1\leq 1;\conv{\conv{x}};1=1;x;1$.
Hence $1;\conv{x};1 =1;x;1$, and $(1;x;1)\,\conv{}=1;\conv{x};1
=1;x;1$ as desired.
\end{proof}

It is useful to refer to $1;x;1$ as the \emph{closure} of $x$,
especially in a proper relation algebra, where
$E|R|E=\bigcup\{U_\alpha \times U_\alpha : U_\alpha\text{ is an
equivalence class of }E, \  (U_\alpha \times U_\alpha)\cap R
\neq\emptyset\}$.

\begin{defn}
  Let $\A\in\textsf{RA}$, and $I\subseteq A$.  $I$ is said to be a
  \emph{relational ideal} if
  \begin{enumerate}
\item $y\in I, \ x \leq y \ \Rightarrow x\in I$

\item $x,y\in I \Rightarrow x+y\in I$

\item $x\in I \Rightarrow 1;x, \, x;1, \, \conv{x} \in I$

\end{enumerate}
\end{defn}

Note that an ideal $I$ is proper iff $1\notin I$, since $I$ is
``closed going down" (see i. above).  It is a straightforward
exercise to show that iii. above is equivalent to $x\in I
\Rightarrow 1;x;1\in I$.

\begin{defn}
  An algebra $\A$ is \emph{simple} if the only homomorphisms from $\A$ onto similar algebras are either isomorphisms or else
  are mappings from $\A$ to the degenerate (1-element) algebra.
\end{defn}

This next theorem provides a useful characterization of the simple
relation algebras.

\begin{thm}
Let $\A\in\textsf{RA}$.  Then $\A$ is simple iff for all $x\neq 0$,
$1;x;1=1$.
\end{thm}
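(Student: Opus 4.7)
The plan is to exploit the correspondence between relational ideals and congruences (analogous to the boolean-algebraic case recalled in Chapter 1). For the ($\Rightarrow$) direction, fix $x \neq 0$ and consider the principal down-set $I_x := \{y \in A : y \leq 1;x;1\}$. I would first verify that $I_x$ satisfies the three conditions of Definition 2.2.11: downward closure is immediate, closure under $+$ holds because $1;x;1$ upper-bounds every element, closure under $\conv{}$ is exactly Theorem 2.2.10, and closure under $1;\cdot$ and $\cdot;1$ follows from the easy equality $1;1 = 1$ (which holds since $1 = 1;\1 \leq 1;1 \leq 1$). The ideal is non-trivial because Theorem 2.2.7 combined with monotonicity gives $x \leq x;\conv{x};x \leq 1;\conv{x};1 = 1;x;1$, so $x \in I_x$. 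Since $x \neq 0$, the quotient homomorphism $\A \to \A/I_x$ is not injective, so simplicity of $\A$ forces the quotient to be degenerate, i.e., $I_x = A$; in particular $1 \in I_x$, and hence $1;x;1 = 1$.

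For the ($\Leftarrow$) direction, let $h : \A \to \B$ be any surjective homomorphism onto a non-degenerate $\B$; I claim $h$ must be an isomorphism. Its kernel $K = h^{-1}(0)$ is a relational ideal (a routine check against the conditions of Def. 2.2.11). If some nonzero $x$ lay in $K$, then by the closure condition (equivalently, $x \in K \Rightarrow 1;x;1 \in K$, as noted after Def. 2.2.11) together with the hypothesis $1;x;1 = 1$, we would have $1 \in K$, so $h(1) = 0$. But then $h(y) = h(y \cdot 1) = h(y) \cdot h(1) = 0$ for every $y$, making $\B$ degenerate, contrary to assumption. Hence $K = \{0\}$ and $h$ is injective.

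The main obstacle is the appeal in the forward direction to the correspondence between relational ideals and congruences on an \textsf{RA}: the text up to this point only records this correspondence explicitly for boolean algebras, while for relational ideals it gives only the definition. Closing this gap is routine---one checks that the boolean congruence associated with the underlying boolean-algebraic ideal respects $;$ and $\conv{}$ precisely when condition (iii) of Def. 2.2.11 holds---but it is the one non-cosmetic ingredient the proof relies on beyond the arithmetic already developed earlier in this section.
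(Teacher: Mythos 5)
Your proof is correct and follows essentially the same route as the paper's: one direction rests on the principal relational ideal $\{z : z\leq 1;x;1\}$ being proper and nontrivial, the other on the kernel of a homomorphism being a relational ideal that absorbs $1;x;1$; you merely argue both directions directly where the paper argues by contrapositive. The ideal--congruence correspondence you flag as the one remaining gap is likewise left implicit in the paper (it is only noted in the remark preceding its proof), so you have if anything been more explicit about the same dependency.
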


Note that if $\A$ is simple and $g$ is a homomorphism with domain
$\A$, then $\text{ker } g = \{x\in A: g(x)=g(0)\}$ is either $\{0\}$
or $A$. Thus the only relational ideals on $\A$ are $\{0\}$ and $A$.

\begin{proof}
We prove both directions by contrapositive.  Suppose $\A$ is not
simple.  Then there is a relational ideal $I$ on $A$ such that
$\{0\}\subsetneq I\subsetneq A$.  Thus there is some $x\in I$,
$x\neq 0$ and $1;x;1\in I$.  But $I$ is proper, so $1\notin I$, and
$1;x;1<1$.

Conversely, suppose that there is some $x\neq 0$ so that $1;x;1<1$.
Then $I=\{z : z\leq 1;x;1 \}$ is a relational ideal, and
$\{0\}\subsetneq I\subsetneq A$.
\end{proof}

%%%%%%%%%%%%%%%%%%%%%%%%%%%%%%%%%%%%%%%%%%%%%%%%%%%%%%%%%%%%%%%%%%%%%%%%%%%%%%%%%%%%
\section{Representable relation algebras}

\textsf{RA}s are algebraic generalizations of \textsf{PRA}s.  It
is natural to ask whether every \textsf{RA} is isomorphic to some
\textsf{PRA}.

\begin{defn}  A relation algebra is said to be
\emph{representable} if it is isomorphic to some proper relation
algebra.  The class of all representable relation algebras is
denoted by \textsf{RRA}. \end{defn}

So we have $\textsf{RRA}= \ii\textsf{PRA} = \ii\sss\{\Sub:
E=E|E^{-1}\} = \ss \{\Sub:
E=E|E^{-1}\}$.\\

\begin{thm}[Lyndon, 1950]  \textsf{RRA}$\neq$\textsf{RA}. \end{thm}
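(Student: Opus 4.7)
The plan is to exhibit a finite integral relation algebra $\A\in\textsf{RA}$ that cannot be isomorphic to any proper relation algebra. Following Lyndon's original approach, one constructs $\A$ from a finite combinatorial/geometric structure (classically a projective plane of appropriate order) in which the atoms of $\A$ correspond to ``types'' of ordered pairs from the geometry and the composition on atoms encodes the incidence relation. The boolean reduct is taken to be the full power set of the atom set, so axioms (1.1)--(1.3) are automatic, while $;$ and $\conv{}$ are extended from atoms to all of $A$ by distributivity of $;$ over $+$ and by setting $\conv{x}=\sum\{\conv{a} : a \text{ an atom}, \ a\leq x\}$.

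The first task is to verify axioms (1.4)--(1.10). Under the distributive extension, each axiom reduces to a finite system of identities on atoms (really on pairs and triples of atoms), which one can check mechanically from the multiplication table. Axioms (1.6)--(1.9) amount to bookkeeping given the symmetric construction; associativity (1.4) and the Peirce/Lyndon identity (1.10) are the substantive cases, and they hold precisely because the encoded incidence data has been arranged to be \emph{locally consistent} on every such triple of atoms.

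The heart of the theorem is nonrepresentability. Assume for contradiction that $\A\iso\A'\in\textsf{PRA}$ on some base set $U$, so each atom $a$ of $\A$ corresponds to a nonempty $R_a\subseteq U\times U$ with $R_a \mid R_b = \bigcup_{c\leq a;b} R_c$ for every pair of atoms. Starting from a witness pair $\langle x_0,y_0\rangle$ in some $R_a$, I would apply the composition law iteratively to force the existence of points $x_1,\ldots,x_n$ bearing prescribed atomic relations along a short cycle. The composition table of $\A$ is engineered so that, when the cycle closes up, two distinct paths between the same endpoints force two \emph{incompatible} atoms onto that pair. In Lyndon's original construction this clash is precisely the failure of a Desarguesian-type configuration in the geometry underlying $\A$.

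The main obstacle is this final step: one must identify a concrete finite cycle whose forced system of atomic incidences has no joint realization in any $U$. This reflects Lyndon's deeper discovery that representability imposes infinitely many ``cycle equations'' of unbounded variable-complexity, none of which is derivable from axioms (1.1)--(1.10); producing even one such failure, via the projective-plane construction, suffices to prove $\textsf{RRA}\neq\textsf{RA}$.
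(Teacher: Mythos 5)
Your overall strategy --- exhibit a finite relation algebra, assume it has a representation, and chase points until the composition table forces an impossible configuration --- is exactly the shape of the paper's argument. But as written your proposal has a genuine gap, and you name it yourself: ``The main obstacle is this final step: one must identify a concrete finite cycle whose forced system of atomic incidences has no joint realization in any $U$.'' That step is not an obstacle to be deferred; it \emph{is} the theorem. Until you write down a specific multiplication table and a specific chain of forced points ending in a clash, you have only described what a proof would look like. The same is true of your axiom verification: you assert that (1.4) and (1.10) ``hold precisely because the encoded incidence data has been arranged to be locally consistent,'' but no incidence data has been exhibited, so nothing has been checked. A referee could not reconstruct a non-representable algebra from what you have written.

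For comparison, the paper closes this gap with MacKenzie's four-atom algebra (atoms $\1, a, \conv{a}, b$ with $b=\conv{b}$), which is considerably smaller and more concrete than Lyndon's original geometric construction. The contradiction is short and explicit: starting from $\langle x,y\rangle\in b$, the entry $b\leq a;\conv{a}$ forces witnesses $w,v$; then $\langle w,v\rangle\in a;a=a$; then $a\leq b;b$ forces a new point $z$ with $\langle w,z\rangle,\langle z,v\rangle\in b$; finally $\langle x,z\rangle,\langle y,z\rangle\in \conv{a};b\cdot a;b=b$, so $x,y,z$ form a monochromatic $b$-triangle, contradicting $b;b=\bar{b}$ (which says $b\cdot b;b=0$). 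If you want to salvage your write-up, the minimal fix is to commit to one concrete algebra --- MacKenzie's is the easiest --- write out its table, and carry out the point-chase to its explicit contradiction rather than gesturing at the existence of one.
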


\begin{proof}  We exhibit a non-representable relation algebra.
Lyndon found a large non-representable relation algebra.  The
following algebra, which is the smallest, is due to MacKenzie.

Let $\A$ be an algebra with four atoms $\1, a, \conv{a}, b$
($b=\conv{b}$).  The multiplication table for diversity atoms is as
follows:

\bigskip

\begin{tabular}{r|lll}
; &   $a$ & $\conv{a}$ & $b$ \\
\hline
 $a$  & $a$ & 1 & $a+b$ \\
 $\conv{a}$  & 1 & $\conv{a}$ & $\conv{a}+b$ \\
 $b$  & $a+b$ & $\conv{a}+b$ & $\bar{b}$ \\
\end{tabular}

\bigskip

We will show that this cannot be the multiplication table for a
proper relation algebra.  Suppose that $\1, a, \conv{a}, b$ are
real relations, and that $\1$ is an identity relation.  All these
atoms are non-zero, so they all contain a pair.  Let $\langle x,y
\rangle \in b$.

\begin{graph}(6,4)(-6,-2)
\grapharrowlength{.2} \roundnode{x}(-2,0) \roundnode{y}(0,0)
\edge{x}{y} \autonodetext{x}[w]{$x$} \autonodetext{y}[e]{$y$}
\bowtext{x}{y}{0.12}{$b$}
\end{graph}

$b\leq a;\conv{a}=\conv{a};a$, so there exist $w,v$ so that

\bigskip

\bigskip

\begin{graph}(6,4)(-6,-2)
\grapharrowlength{.2} \roundnode{x}(-2,0) \roundnode{y}(0,0)
\edge{x}{y} \autonodetext{x}[w]{$x$} \autonodetext{y}[se]{$y$}
\bowtext{x}{y}{0.1}{$b$} \roundnode{w}(0,2) \roundnode{v}(0,-2)
\diredge{w}{x} \diredge{w}{y} \diredge{x}{v} \diredge{y}{v}
\autonodetext{w}[n]{$w$} \autonodetext{v}[s]{$v$}
\bowtext{w}{x}{-0.1}{$a$} \bowtext{w}{y}{0.1}{$a$}
\bowtext{x}{v}{-0.1}{$a$} \bowtext{y}{v}{0.1}{$a$}
\end{graph}

\bigskip

Then $\langle w,v \rangle \in a;a=a$, so we can add the edge

\bigskip

\begin{graph}(6,4)(-6,-1.5)
\grapharrowlength{.2} \roundnode{x}(-2,0) \roundnode{y}(0,0)
\edge{x}{y} \autonodetext{x}[w]{$x$} \autonodetext{y}[se]{$y$}
\bowtext{x}{y}{0.1}{$b$} \roundnode{w}(0,2) \roundnode{v}(0,-2)
\diredge{w}{x} \diredge{w}{y} \diredge{x}{v} \diredge{y}{v}
\autonodetext{w}[n]{$w$} \autonodetext{v}[s]{$v$}
\bowtext{w}{x}{-0.1}{$a$} \bowtext{w}{y}{0.1}{$a$}
\bowtext{x}{v}{-0.1}{$a$} \bowtext{y}{v}{0.1}{$a$}
\dirbow{w}{v}{0.2} \bowtext{w}{v}{0.25}{$a$}
\end{graph}

\bigskip

\bigskip

Now $a\leq b;b$, so there exists $z$ which is distinct from $v,w,x$
such that

\bigskip

\bigskip

\begin{graph}(6,4)(-6,-2)
\grapharrowlength{.2} \roundnode{x}(-2,0) \roundnode{y}(0,0)
\edge{x}{y} \autonodetext{x}[w]{$x$} \autonodetext{y}[se]{$y$}
\bowtext{x}{y}{0.1}{$b$} \roundnode{w}(0,2) \roundnode{v}(0,-2)
\diredge{w}{x} \diredge{w}{y} \diredge{x}{v} \diredge{y}{v}
\autonodetext{w}[n]{$w$} \autonodetext{v}[s]{$v$}
\bowtext{w}{x}{-0.1}{$a$} \bowtext{w}{y}{0.1}{$a$}
\bowtext{x}{v}{-0.1}{$a$} \bowtext{y}{v}{0.1}{$a$}
\dirbow{w}{v}{0.2} \bowtext{w}{v}{0.25}{$a$} \roundnode{z}(3,0)
\edge{w}{z} \edge{v}{z} \bowtext{w}{z}{0.07}{$b$}
\bowtext{z}{v}{0.07}{$b$} \autonodetext{z}[e]{$z$}
\end{graph}

\bigskip

\bigskip

Note that $z\neq v,w,x$ since $\langle z,x\rangle \in b;a\leq \0$
and $\langle z,v\rangle, \langle z,w\rangle  \in b\leq \0$.  Now
since  $\langle z,x \rangle \in b;a$ and $\A$ is a finite algebra,
 there is an atom that contains $\langle z,x \rangle$;\footnote{If $\A$ were not finite this might not be the
case!} likewise for $\langle z,y \rangle$. Hence we have the
following edges that need labels:

\bigskip

\bigskip

\begin{graph}(7,7)(-6,-4)
\grapharrowlength{.2} \roundnode{w}(0,3) \roundnode{x}(-2,0)
\roundnode{y}(0,0) \roundnode{v}(0,-3) \roundnode{z}(5,0)
\edge{w}{z} \edge{v}{z} \edge{x}{y}  \diredge{w}{x} \edge{w}{z}
\diredge{w}{y} \diredge{x}{v} \diredge{y}{v} \dirbow{w}{v}{0.2}
\autonodetext{w}[n]{$w$} \autonodetext{x}[w]{$x$}
\autonodetext{y}[e]{$y$} \autonodetext{z}[e]{$z$}
\autonodetext{v}[s]{$v$} \bowtext{w}{x}{-0.05}{$a$}
\bowtext{w}{y}{0.06}{$a$} \bowtext{x}{v}{-0.05}{$a$}
\bowtext{y}{v}{0.06}{$a$} \bowtext{w}{v}{0.25}{$a$}
\bowtext{w}{z}{0.05}{$b$} \bowtext{z}{v}{0.05}{$b$}
\bowtext{x}{y}{0.1}{$b$} \bow{z}{x}{.55}[\graphlinedash{3 1}]
\bow{z}{y}{-.12}[\graphlinedash{3 1}] \bowtext{z}{x}{.6}{$\alpha$}
\bowtext{z}{y}{-.17}{$\beta$}
\end{graph}

\bigskip

\bigskip

%Now we isolate part of the picture:

%\bigskip

%\bigskip

%\begin{graph}(8,4)(-7,-2)
%\grapharrowlength{.2} \roundnode{x}(-2,2) \roundnode{y}(2,2)
%\roundnode{w}(-4,0) \roundnode{z}(0,0) \roundnode{w2}(4,0)
%\autonodetext{x}[nw]{$x$} \autonodetext{y}[ne]{$y$}
%\autonodetext{w}[sw]{$w$} \autonodetext{z}[s]{$z$}
%\autonodetext{w2}[se]{$w$} \edge{x}{y} \edge{w}{z} \edge{z}{w2}
%\diredge{w}{x} \diredge{w2}{y} \edge{z}{x}[\graphlinedash{3 1}]
%\edge{z}{y}[\graphlinedash{3 1}] \bowtext{w}{x}{0.07}{$a$}
%\bowtext{w2}{y}{-0.07}{$a$} \bowtext{x}{y}{0.08}{$b$}
%\bowtext{w}{z}{-0.07}{$b$} \bowtext{z}{w2}{-0.07}{$b$}
%\bowtext{z}{x}{0.08}{$\alpha$} \bowtext{z}{y}{-0.1}{$\beta$}
%\freetext(5,1){(1)}
%\end{graph}

%\bigskip

%\begin{graph}(8,4)(-7,-2)
%\grapharrowlength{.2} \roundnode{x}(-2,2) \roundnode{y}(2,2)
%\roundnode{v}(-4,0) \roundnode{z}(0,0) \roundnode{v2}(4,0)
%\autonodetext{x}[nw]{$x$} \autonodetext{y}[ne]{$y$}
%\autonodetext{v}[sw]{$v$} \autonodetext{z}[s]{$z$}
%\autonodetext{v2}[se]{$v$} \edge{x}{y} \edge{v}{z} \edge{z}{v2}
%\diredge{x}{v} \diredge{y}{v2} \edge{z}{x}[\graphlinedash{3 1}]
%\edge{z}{y}[\graphlinedash{3 1}] \bowtext{v}{x}{0.07}{$a$}
%\bowtext{v2}{y}{-0.07}{$a$} \bowtext{x}{y}{0.08}{$b$}
%\bowtext{v}{z}{-0.07}{$b$} \bowtext{z}{v2}{-0.07}{$b$}
%\bowtext{z}{x}{0.08}{$\alpha$} \bowtext{z}{y}{-0.1}{$\beta$}
%\freetext(5,1){(2)}
%\end{graph}

%\bigskip

%\bigskip

Now $\langle x,z\rangle, \, \langle y,z\rangle \in \conv{a};b\cdot
a;b=b$. Therefore the edges marked $\alpha, \beta$ can be labeled
$b$.  But then $x, y,$ and $z$ form a ``monochromatic triangle":

\bigskip

\begin{graph}(4,4)(-7,-2)
\grapharrowlength{.2} \roundnode{x}(-1,1) \roundnode{y}(1,1)
\roundnode{z}(0,-1) \edge{x}{y} \edge{x}{z} \edge{y}{z}
\bowtext{x}{y}{0.1}{$b$} \bowtext{y}{z}{0.09}{$b$}
\bowtext{z}{x}{0.09}{$b$} \autonodetext{x}[nw]{$x$}
\autonodetext{y}[ne]{$y$} \autonodetext{z}[s]{$z$}
\end{graph}

But then $b\cdot b;b \neq 0$, which is incompatible with the
multiplication table, which says that $b;b=\bar{b}$.  Hence $\A$ is
not representable.
\end{proof}

%auto-ignore
% Chapter 2 of the Thesis Template File
%   which includes bibliographic references.
\chapter{RRA is an equational class}

In this chapter we will show that \textsf{RRA} is closed under
$\hh$, $\ss$, and $\pp$.  Thus by Birkhoff's theorem the set of
equations true in \textsf{RRA} axiomatizes the class.

This theorem has an interesting history.  In \cite{Lyn50} Roger
Lyndon published a proof that \textsf{RRA} was not axiomatizable by
quantifier-free formulas---namely equations,
quasi-equations\footnote{A \emph{quasi-equation} is a
quantifier-free formula of the form $\varepsilon_0 \meet
\varepsilon_1 \meet \ldots \meet \varepsilon_{n-1} \Longrightarrow
\varepsilon_n$, where $\varepsilon_0, \ldots, \varepsilon_n$ are all
equations.}, and the like.  Five years later Tarski published a
result that showed that \textsf{RRA} was axiomatizable by equations
in \cite{Tar55}!  It turned out that Lyndon had made a mistake and
that one of the algebras that he used in his proof which was thought
not to be representable was in fact representable.

\section{Closure under subalgebras and products}

\begin{thm}
$\textsf{RRA}=\ss\textsf{RRA}$.
\end{thm}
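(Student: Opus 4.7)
The plan is to derive this immediately from the characterization of $\textsf{RRA}$ established just before the theorem, namely $\textsf{RRA} = \ss\{\Sub : E = E|E^{-1}\}$, together with the idempotence identity $\ss\ss\K = \ss\K$ listed in the introduction. In other words, once we recognize $\textsf{RRA}$ itself as the image of a class under $\ss$, closure under $\ss$ is automatic. The inclusion $\textsf{RRA} \subseteq \ss\textsf{RRA}$ is trivial, since every algebra is (isomorphic to) a subalgebra of itself, so the content is entirely in the reverse inclusion.

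For $\ss\textsf{RRA} \subseteq \textsf{RRA}$, I would unfold the definitions directly. Suppose $\A \in \ss\textsf{RRA}$. Then there is a $\B \in \textsf{RRA}$ and an isomorphism $\A \cong \underline{C}$ for some $\underline{C} \subseteq \B$. Because $\B \in \textsf{RRA}$, there is a nonempty equivalence relation $E$ (equivalently $E = E|E^{-1}$, by Theorem 2.1.2) and an isomorphism $\B \cong \underline{D}$ for some $\underline{D} \subseteq \Sub$. Carrying $\underline{C}$ across this isomorphism produces a subalgebra of $\underline{D}$, and hence a subalgebra of $\Sub$, to which $\A$ is isomorphic. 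Therefore $\A \in \ss\{\Sub : E = E|E^{-1}\} = \textsf{RRA}$.

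Symbolically, this is just the chain
\[
\ss\textsf{RRA} \;=\; \ss\ss\{\Sub : E = E|E^{-1}\} \;=\; \ss\{\Sub : E = E|E^{-1}\} \;=\; \textsf{RRA},
\]
where the middle equality is the idempotence of $\ss$. I do not anticipate a genuine obstacle here; the only thing that needs any care is bookkeeping the composition of isomorphisms so that one really ends up with a subalgebra of some $\Sub$ with $E = E|E^{-1}$, rather than merely a subalgebra of a subalgebra of one. Since $\ss'\ss' = \ss'$ and $\ii\ss' = \ss$, this tracking is routine.
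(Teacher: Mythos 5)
Your proposal is correct and is essentially identical to the paper's own proof, which consists of exactly the chain $\ss\textsf{RRA}=\ss\ss\{\Sub : E=E|E^{-1}\}=\ss\{\Sub : E=E|E^{-1}\}=\textsf{RRA}$ using idempotence of $\ss$. The extra bookkeeping you describe (composing the isomorphisms, and the identities $\ss'\ss'=\ss'$ and $\ii\ss'=\ss$) is just the justification of that idempotence, which the paper takes as already established in the introduction.
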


\begin{proof}
$\ss\textsf{RRA}=\ss\ss\{\Sub : E=E|E^{-1} \}=\ss\{\Sub :
E=E|E^{-1} \}=\textsf{RRA}$.
\end{proof}

\begin{thm}
$\textsf{RRA}=\pp\textsf{RRA}$.
\end{thm}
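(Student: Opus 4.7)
The plan is to reduce closure under products to a disjoint-union construction on the concrete side. The inclusion $\textsf{RRA}\subseteq\pp\textsf{RRA}$ is immediate, since $\textsf{RRA}=\ii\textsf{RRA}$ and every algebra is (isomorphic to) the one-factor product of itself. For the nontrivial direction I would use the identity $\textsf{RRA}=\ss\{\Sub : E=E|E^{-1}\}$ established at the end of Section 2.3, together with the general inclusion $\pp\ss\K\subseteq\ss\pp\K$ from Section 1.1, to obtain
\[
\pp\textsf{RRA}=\pp\ss\{\Sub : E=E|E^{-1}\}\subseteq\ss\pp\{\Sub : E=E|E^{-1}\}.
\]
It therefore suffices to prove the key reduction $\pp\{\Sub : E=E|E^{-1}\}\subseteq\ii\{\Sub : E=E|E^{-1}\}$; once we have this, $\pp\textsf{RRA}\subseteq\ss\ii\{\Sub : E=E|E^{-1}\}=\ss\{\Sub : E=E|E^{-1}\}=\textsf{RRA}$.

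To prove the reduction, suppose we are given equivalence relations $E_i$ on base sets $U_i$ for $i\in I$. I would first pass to pairwise disjoint isomorphic copies, replacing each $U_i$ by a disjoint set $U_i'$ and transporting $E_i$ to an equivalence relation $E_i'\subseteq U_i'\times U_i'$. Setting $U=\bigcup_{i\in I}U_i'$ and $\widehat{E}=\bigcup_{i\in I}E_i'$, the relation $\widehat{E}$ is an equivalence relation on $U$, with no pair crossing components. I would then define the candidate isomorphism
\[
\varphi:\underline{\mathrm{Sb}}(\widehat{E})\longrightarrow\prod_{i\in I}\underline{\mathrm{Sb}}(E_i'),\qquad\varphi(R)=\langle R\cap E_i':i\in I\rangle,
\]
with inverse $\langle R_i:i\in I\rangle\longmapsto\bigcup_{i\in I}R_i$, and verify that $\varphi$ preserves each relation-algebraic operation.

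The Boolean checks are routine, the only subtlety being that complementation in $\underline{\mathrm{Sb}}(\widehat{E})$ is taken relative to $\widehat{E}$, so that $\widehat{E}\setminus R=\bigcup_{i\in I}(E_i'\setminus R_i)$ still splits componentwise; converse and the diagonal $\Id_{\widehat{E}}=\bigcup_i\Id_{E_i'}$ split the same way. The step that requires a moment's thought is the relative product: here I would exploit the disjointness of the $U_i'$ to argue that, for $R,S\subseteq\widehat{E}$, any intermediate vertex witnessing a pair in $R|S$ lies in a unique $U_i'$, so the pair must arise from a pair in $R\cap E_i'$ followed by a pair in $S\cap E_i'$ for that same $i$. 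This kills all cross terms and yields componentwise composition. This is the only mildly delicate point of the argument; once it is in place, everything else is bookkeeping, and $\varphi$ furnishes the desired isomorphism.
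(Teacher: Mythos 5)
Your proof is correct, but it takes a genuinely different route from the paper's. The paper also starts from $\pp\ss\subseteq\ss\pp$, but it first rewrites $\textsf{RRA}$ as $\ss\pp\{\RR(U)\}$ via the decomposition theorem (Th.~2.1.2), so that the extra product operator is absorbed by the identity $\pp\pp=\pp$: namely $\pp\ss\pp\{\RR(U)\}\subseteq\ss\pp\pp\{\RR(U)\}=\ss\pp\{\RR(U)\}=\textsf{RRA}$. You instead keep the class in the form $\ss\{\Sub : E=E|E^{-1}\}$ and supply the new reduction $\pp\{\Sub : E=E|E^{-1}\}\subseteq\ii\{\Sub : E=E|E^{-1}\}$ by gluing disjoint copies of the $E_i$ into a single equivalence relation $\widehat{E}$; your verification is sound, and you correctly identify the one nontrivial point, that disjointness of the components forces any intermediate vertex in a relative product to stay inside a single block, killing all cross terms. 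This is the same phenomenon that drives the proof of Th.~2.1.2, run in the gluing rather than the splitting direction. What your approach buys is self-containedness and a concrete picture of what a product of full proper relation algebras \emph{is}; what it costs is that your key lemma is already an immediate consequence of machinery the paper has on hand, since Th.~2.1.2 gives $\ii\{\Sub : E=E|E^{-1}\}=\pp\{\RR(U)\}$ and hence $\pp\{\Sub : E=E|E^{-1}\}=\pp\pp\{\RR(U)\}=\pp\{\RR(U)\}=\ii\{\Sub : E=E|E^{-1}\}$ with no new construction. Either way the conclusion $\pp\textsf{RRA}\subseteq\ss\{\Sub : E=E|E^{-1}\}=\textsf{RRA}$ follows, and the reverse inclusion is trivial as you note.
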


\begin{proof}
Recall Th 2.1.2, which says that $\ss\{\Sub : E=E|E^{-1} \} =
\ss\pp\{\RR (U)\}$.

Then $\pp\textsf{RRA}=\pp\ss\pp\{\RR(U)\}
\stackrel{(\star)}{\subseteq}
\ss\pp\pp\{\RR(U)\}=\ss\pp\{\RR(U)\}=\textsf{RRA}$, where ($\star$)
holds since $\pp\ss\subseteq\ss\pp$ in general.
\end{proof}

\section{Closure under homomorphic images}

Closure under $\hh$ turns out to be as challenging as closure under
$\ss$ and $\pp$ was easy.  The proof that we give here is due to
Roger Maddux, and was presented by him in a universal algebra course
at Iowa State. This proof also appears in \cite{Madd}.  We have a
series of lemmas, theorems, and definitions. Our first goal is to
demonstrate that $\hh\ss\{\Sub : E=E|E^{-1} \}=\ss\hh\{\Sub :
E=E|E^{-1} \}$.

\begin{defn} We say that an algebra $\A'$ is \emph{congruence
extensile}, or that it has the \emph{congruence extension property},
if for all $\A\subseteq\A'$ and all congruences $\C\subseteq A\times
A$, $\C$ extends to a congruence $\C'\subseteq A'\times A'$ so that
$\C = \C' \cap (A\times A)$.
\end{defn}

\begin{lemma}
Suppose $\A'$ has the congruence extension property.  Then
$\hh\ss\{\A'\} = \ss\hh\{\A'\}$.
\end{lemma}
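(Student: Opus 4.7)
The plan is to prove the two inclusions separately, with only the nontrivial direction requiring the congruence extension hypothesis.

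One inclusion, $\ss\hh\{\A'\} \subseteq \hh\ss\{\A'\}$, holds with no assumptions on $\A'$; it is one of the generic operator inclusions listed in the excerpt. So everything reduces to establishing $\hh\ss\{\A'\} \subseteq \ss\hh\{\A'\}$.

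To that end, I would take an arbitrary $\B \in \hh\ss\{\A'\}$. Unfolding definitions (and using $\ii\ss = \ss$), we may assume $\B \iso \A/\C$ for some subalgebra $\A \subseteq \A'$ and some congruence $\C \subseteq A \times A$. Now I invoke the congruence extension property: choose a congruence $\C' \subseteq A' \times A'$ with $\C = \C' \cap (A\times A)$. Consider the map
\[
\varphi : \A/\C \longrightarrow \A'/\C', \qquad a/\C \longmapsto a/\C' \quad (a\in A).
\]
The plan is to verify that $\varphi$ is an embedding, from which $\B \iso \A/\C \iso |\,\subseteq \A'/\C' \in \hh\{\A'\}$ gives $\B \in \ss\hh\{\A'\}$ as required.

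The three things to check are routine but each uses one half of the hypothesis $\C = \C' \cap (A\times A)$. Well-definedness of $\varphi$ uses the inclusion $\C \subseteq \C'$: if $a/\C = b/\C$ then $(a,b)\in \C \subseteq \C'$, hence $a/\C' = b/\C'$. Injectivity uses the reverse inclusion $\C' \cap (A\times A)\subseteq \C$: if $a/\C' = b/\C'$ with $a,b \in A$, then $(a,b)\in \C'\cap (A\times A) = \C$, so $a/\C = b/\C$. That $\varphi$ preserves the operations is immediate from the fact that both quotient maps $A \to A/\C$ and $A \hookrightarrow A' \to A'/\C'$ are homomorphisms and they agree pointwise on $A$ modulo $\varphi$.

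There is no real obstacle here; the only subtlety is keeping straight the difference between $\A/\C$ (a quotient of the subalgebra) and the subalgebra of $\A'/\C'$ consisting of $\C'$-classes that meet $A$. The condition $\C = \C' \cap (A\times A)$ is exactly what is needed to identify these two algebras, and it is precisely what the congruence extension property supplies.
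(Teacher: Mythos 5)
Your proof is correct and follows essentially the same route as the paper: both reduce to the inclusion $\hh\ss\{\A'\}\subseteq\ss\hh\{\A'\}$, extend the congruence $\C$ on the subalgebra $\A$ to a congruence $\C'$ on $\A'$, and identify $\A/\C$ with a subalgebra of $\A'/\C'$ via $a/\C\mapsto a/\C'$, using $\C=\C'\cap(A\times A)$ for well-definedness and injectivity. The paper phrases this as restricting the quotient map $h':\A'\twoheadrightarrow\A'/\C'$ to $A$, but the content is identical, and your explicit verification of the embedding is if anything slightly more careful.
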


\begin{proof}
We know in general that $\ss\hh\subseteq\hh\ss$.  So suppose that
$\B \in \hh\ss\{\A'\}$.  So there is some $\A \subseteq \A'$, and
$h:\A \twoheadrightarrow \B$.\footnote{Recall that $h:\A
\longrightarrow \B$ (as opposed to $h:A\longrightarrow B$) will
always denote a \emph{homomorphism}.}  Let $\C=h|h^{-1}$. Extend
$\C$ to $\C'\subseteq A'\times A'$. $\C'$ induces a homomorphism
$h':\A'\twoheadrightarrow \A' / \C'$. Then $h'\cap (A\times
\textsf{V}):\A\twoheadrightarrow\A / \C$ is a homomorphism, since
$\C=\C'\cap(A\times A)$.  So $\A / \C \in \ss\hh\{\A'\}$.  But
$\B\iso \A / \C$, so $\B \in \ss\hh\{\A'\}$.
\end{proof}

\begin{lemma}
Let $\A'\in\textsf{RA}$.  Then $\A'$ has the congruence extension
property.
\end{lemma}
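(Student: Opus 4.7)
My plan is to reduce the congruence extension problem to an ideal extension problem, using the standard correspondence between congruences on a relation algebra and relational ideals (the relation-algebraic analogue of the boolean case recalled in Section 1.2: given a congruence $\C$, the ideal is $\{x : \langle x,0 \rangle \in \C\}$; given a relational ideal $I$, the congruence is $\{\langle x,y \rangle : x \vartriangle y \in I\}$). Granting this correspondence, it suffices to show that any relational ideal $I$ on a subalgebra $\A \subseteq \A'$ extends to a relational ideal $I'$ on $\A'$ with $I' \cap A = I$.

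\textbf{Step 1: Construction.} Given $I$ on $\A$, set
\[
I' = \{y \in A' : y \leq x \text{ for some } x \in I \},
\]
the downward closure of $I$ in $\A'$.

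\textbf{Step 2: Verify $I'$ is a relational ideal on $\A'$.} Downward closure is automatic from the definition. For closure under $+$, if $y_1 \leq x_1$ and $y_2 \leq x_2$ with $x_1, x_2 \in I$, then $y_1 + y_2 \leq x_1 + x_2 \in I$ (since $I$ is an ideal on $\A$), so $y_1 + y_2 \in I'$. For closure under $1;x$, $x;1$, and $\conv{x}$: if $y \leq x$ with $x \in I$, then by monotonicity of composition (Th.\ 2.2.3) and of conversion (Th.\ 2.2.2), we have $1;y \leq 1;x$, $y;1 \leq x;1$, and $\conv{y} \leq \conv{x}$; since $I$ is a relational ideal on $\A$, the elements $1;x$, $x;1$, $\conv{x}$ all lie in $I$, so the corresponding elements for $y$ lie in $I'$. (Note $0, 1 \in A$ because $\A$ is a nonempty subalgebra, and $\1$ is a fundamental constant.)

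\textbf{Step 3: Verify $I' \cap A = I$.} The inclusion $I \subseteq I' \cap A$ is immediate since $y \leq y$. Conversely, if $y \in A$ and $y \leq x$ for some $x \in I$, then $y \in I$ by downward closure of $I$ inside $\A$.

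\textbf{Step 4: Transfer back to congruences.} Let $\C'$ be the congruence on $\A'$ corresponding to $I'$. For $x, y \in A$, $\langle x, y \rangle \in \C' \cap (A \times A)$ iff $x \vartriangle y \in I'$; since $x \vartriangle y \in A$, Step 3 gives $x \vartriangle y \in I$, equivalently $\langle x, y \rangle \in \C$.

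The main obstacle I expect is simply checking closure of $I'$ under the relational operations cleanly; the crucial input is the monotonicity of $;$ and $\conv{}$, which lets us bound each operation applied to $y \in I'$ by the same operation applied to the ``witness'' $x \in I$. The correspondence between congruences and relational ideals is the conceptual engine that makes the extension reduce to a simple downward-closure argument, which is why \textsf{RA} has the congruence extension property whereas a general variety need not.
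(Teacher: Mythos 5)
Your proof is correct and follows essentially the same route as the paper: pass from the congruence to its zero-class ideal, take the downward closure in $\A'$, check it is a relational ideal restricting to $I$ on $A$, and convert back to a congruence via $x \vartriangle y$. The only difference is presentational: the paper verifies inline that the zero-class of a congruence on $\A$ is a relational ideal (conditions i.--iii.), whereas you take that half of the congruence--ideal correspondence as given.
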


\begin{proof}
Let $\A\subseteq \A'$.  Let $\C\subseteq A\times A$ be a congruence.
Let $I$ be the equivalence class of the boolean zero.  $I$ is a
relational ideal, i.e. satisfies i.--iii. below:

\renewcommand{\theenumi}{\roman{enumi}}

\begin{enumerate}
\item $y\in I, \ x \leq y \ \Rightarrow x\in I$

\item $x,y\in I \Rightarrow x+y\in I$

\item $x\in I \Rightarrow 1;x, \, x;1, \, \conv{x} \in I$

\end{enumerate}

To prove i., let $y\C 0$ (i.e. $y\in I$) and $x\leq y$.  Then
$x\cdot y = x$.  Since $\C$ is a congruence and $y\C 0$, we have
$x\cdot y \C x \cdot 0$.  Hence $x\C 0$.

%Then $x+y=y$, and $(x+y)\C 0$ ($\spadesuit$). Since $y\C 0$ and $\C$
%is a congruence, $\bar{y}\C 1$ ($\clubsuit$).  Again since $\C$ is a
%congruence, $[(x+y)\cdot \bar{y}]\C [0\cdot 1]$ by $(\spadesuit)$
%and $(\clubsuit)$, and $(x+y)\cdot \bar{y}=x$ so $x\C 0$, i.e. $x\in
%I$.

To prove ii., let $x\C 0$ and $y\C 0$.  Then $(x+y)\C (0+0)$.  To
prove iii., let $x\C 0$.  Note that $1\C 1$ since $\C$ is reflexive.
Then $(x;1)\C (0;1)$ and $0;1=0$, so $(x;1)\C 0$, and similarly for
$1;x$.  Also, since $x\C 0$, we have $\conv{x}\C \conv{0}$.  But
$\conv{0}=0$, so $\conv{x}\C 0$.  So $I$ is a relational ideal.

Now let $J=\{x\in A' : x\leq y\in I\}\subseteq A'$.  Now $J$ is a
relational ideal on $\A'$.  $J$ induces a congruence on $A'$, $\C' =
\{\langle x,y\rangle : x\vartriangle y \in J \}$.\footnote{Recall
that $\vartriangle$ denotes symmetric difference: $x\vartriangle y
:= x\cdot\bar{y} + \bar{x}\cdot y$} Now $\C=\C' \cap (A\times A)$:
the inclusion $\subseteq$ is clear. If $\langle x,y \rangle \in \C'
\cap (A\times A)$, then $ x,y  \in J$ and $x\vartriangle y \in J$.
But $J\cap A = I$, so $x\vartriangle y \in I$, and $\langle x,y
\rangle \in \C$. This concludes the proof.
\end{proof}

\begin{thm} $\hh\ss\{\Sub : E=E|E^{-1} \}=\ss\hh\{\Sub : E=E|E^{-1} \}$. \end{thm}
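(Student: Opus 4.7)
The plan is to reduce the class-level identity to the singleton case already handled by Lemmas 3.2.2 and 3.2.3, and then glue everything back together.

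First, I would dispense with the easy direction: $\ss\hh\K \subseteq \hh\ss\K$ holds for any class $\K$ of similar algebras, and this was recorded among the general inclusions in Section 1.1. So only the inclusion $\hh\ss\{\Sub : E = E|E^{-1}\} \subseteq \ss\hh\{\Sub : E = E|E^{-1}\}$ requires work.

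Next I would make the standard observation that the operators $\hh\ss$ and $\ss\hh$ "commute with unions over the class," in the sense that for any class $\K$,
\[
\hh\ss\K \;=\; \bigcup_{\A' \in \K}\hh\ss\{\A'\}, \qquad \ss\hh\K \;=\; \bigcup_{\A' \in \K}\ss\hh\{\A'\}.
\]
This is immediate from the definitions: a subalgebra of a member of $\K$ is a subalgebra of some single $\A' \in \K$, and a homomorphic image of a member of $\K$ is a homomorphic image of some single $\A' \in \K$. Consequently, to prove the desired equality it suffices to show $\hh\ss\{\A'\} = \ss\hh\{\A'\}$ for each fixed $\A' = \Sub$ with $E = E|E^{-1}$.

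Fix such an $\A' = \Sub$. Since $\A'$ is a proper relation algebra, it satisfies the \textsf{RA} axioms and hence $\A' \in \textsf{RA}$. By Lemma 3.2.3, $\A'$ has the congruence extension property. By Lemma 3.2.2, $\hh\ss\{\A'\} = \ss\hh\{\A'\}$. Taking the union of these singleton equalities over all $E$ with $E=E|E^{-1}$ yields $\hh\ss\{\Sub : E=E|E^{-1}\} = \ss\hh\{\Sub : E=E|E^{-1}\}$, as desired.

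There is no real obstacle here; the two lemmas do all the work, and the only thing to check is the routine bookkeeping that applying $\hh\ss$ and $\ss\hh$ to a class is the same as taking the union of the results on each singleton. The conceptual content sits entirely in the congruence extension property for \textsf{RA} (Lemma 3.2.3), which has already been established via the ideal construction $J = \{x \in A' : x \leq y \in I\}$.
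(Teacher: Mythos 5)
Your proof is correct and follows exactly the paper's route: the paper's own proof is the one-line ``$\Sub\in\textsf{RA}$; apply the previous two lemmas,'' which tacitly relies on the same singleton-by-singleton reduction you spell out. Your explicit verification that $\hh\ss$ and $\ss\hh$ distribute over the union of singletons is just the bookkeeping the paper leaves implicit, so nothing is missing and nothing is genuinely different.
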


\begin{proof} $\Sub \in \textsf{RA}$.  Apply previous two
lemmas.\end{proof}

%%%%%%%%%%%%%%%%%%%%%%%%%%%%%%%%%%%%%%%%%%%%%%%%%%%%%%%%%%%%%%%%%%%%%%%%%%%%%%%%%%%%%%%%%%%%%%%%%%%%%%%%%%%%

Next we wish to show $\hh\{\Sub : E=E|E^{-1} \}\subseteq
\textsf{RRA}$.

\begin{defn} Let $E\neq\emptyset$ be an equivalence relation.  We
define the \emph{points of} $E$,

\[
\Pt:=\{p\subseteq E : E|p|E = E, \ p|E|p\subseteq \Id_E \}
\]
where $\Id_E = \textsf{Id}\cap E$.
\end{defn}

In the following lemma we have properties of the points of $E$
that we will need.  Note that i. provides a characterization of
the points.

\renewcommand{\theenumi}{\roman{enumi}}  %%%%%%%%%%%%%%%%%%%  Why does this need to be repeated here???
\renewcommand{\labelenumii}{\theenumii.}

\begin{lemma}
\begin{enumerate}
\item $p\in\Pt$ iff for all equivalence classes $U$ of $E$,
$\exists u\in U$ $ p\cap U^2 = \{\langle u,u \rangle\}$.

\item $p\in\Pt \Rightarrow p=p^{-1}\subseteq\Id_E$.

\item $R,S\subseteq E, \, p,q\in\Pt$; then
\begin{enumerate}
\item $E|p|R|q|E \, \cap \, E|p|S|q|E = E|p|(R\cap S)|q|E$

\item $E|R|p|E \, \cap \, E|p|S|E = E|R|p|S|E$

\item $E|p|R|q|E = E|q|R^{-1}|p|E$

\end{enumerate}

\item $\forall R\subseteq E \ \exists p,q\in \Pt \ E|R|E =
E|p|R|q|E$

\item $\forall R,S\subseteq E \ \exists p\in \Pt \ E|R|S|E =
E|R|p|S|E$

\end{enumerate}

\end{lemma}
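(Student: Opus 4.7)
The plan is to prove (i) first, yielding a clean ``one diagonal per class'' description of points, from which the remaining parts follow by careful but essentially routine calculations. For the forward direction of (i), given $p \in \Pt$, I would first show that every $\langle u, v\rangle \in p$ is diagonal: since $p \subseteq E$ and $E$ is symmetric, $\langle u,v\rangle \in p|E|p$ via the witnesses $\langle u,v\rangle \in p$, $\langle v,u\rangle \in E$, $\langle u,v\rangle \in p$, and $p|E|p \subseteq \Id_E$ then forces $u = v$. Existence of a point in each class $U$ follows from $E|p|E = E$ applied to any pair in $U^2 \subseteq E$; uniqueness follows by a second application of $p|E|p \subseteq \Id_E$ to two candidate diagonal pairs in $U$. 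The reverse direction is a direct check that a diagonal-per-class set satisfies the two defining conditions. Part (ii) is immediate from (i).

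For (iii), the key step is to derive a concrete description of the composites. Writing $u_U^p$ for the unique element of $U$ with $\langle u_U^p, u_U^p\rangle \in p$, and similarly $u_U^q$, and using that $R, S \subseteq E$ (so every pair in $R$ or $S$ lies inside a single equivalence class), I would show that $(x, y) \in E|p|R|q|E$ iff $[x] = [y] = U$ for some class $U$ and $\langle u_U^p, u_U^q\rangle \in R$; analogous descriptions hold for the other composites appearing in (a)--(c). Then (a) reduces to the triviality that $\langle u_U^p, u_U^q\rangle$ lies in both $R$ and $S$ iff it lies in $R \cap S$. For (b), both sides describe the pairs $(x, y)$ with $[x] = [y] = U$ satisfying both $\exists a \in U \colon a R u_U^p$ and $\exists c \in U \colon u_U^p S c$. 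For (c), the condition $[x] = [y]$ forces $u^p$ and $u^q$ to be drawn from the same class on both sides, and $\langle u_U^q, u_U^p\rangle \in R^{-1}$ is equivalent to $\langle u_U^p, u_U^q\rangle \in R$.

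For (iv) and (v), I would construct the required points by choosing representatives tailored to $R$ (and $S$). For (iv), for each class $U$ with $R \cap U^2 \neq \emptyset$ pick $(a, b) \in R \cap U^2$ and declare $a$ the representative in $p$ and $b$ the representative in $q$; on remaining classes pick arbitrarily. The description from (iii) then matches the straightforward description of $E|R|E$ as the union of $U \times U$ over classes $U$ meeting $R$. For (v), for each class $U$ with $(R|S) \cap U^2 \neq \emptyset$, pick $b \in U$ completing a composition $a R b S c$ inside $U$ and declare $b$ the representative in $p$; elsewhere pick arbitrarily. A direct comparison then finishes the proof. The main obstacle will be part (i): once the diagonal-per-class characterization is in hand, and combined with the hypothesis $R, S \subseteq E$, the remaining parts reduce to manageable combinatorial checks on representatives; within (i) itself, the subtle step is the correct exploitation of $p|E|p \subseteq \Id_E$ to force all pairs in $p$ to be diagonal.
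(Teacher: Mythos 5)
Your proposal is correct and follows essentially the same route as the paper: part (i) is proved by the same two applications of $p|E|p\subseteq\Id_E$ (the paper's ``$upvEupv$'' argument), and parts (iii)--(v) are then derived from the resulting one-diagonal-representative-per-class picture, which is exactly what the paper's diagram-chasing encodes. Your explicit normal form for $E|p|R|q|E$ in terms of the representatives $u_U^p, u_U^q$ is just a cleaner packaging of the same computation, and your constructions for (iv) and (v) match the paper's choice of representatives verbatim.
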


\begin{proof} For this proof we will abbreviate ``$\langle u,v
\rangle \in p$" by ``$upv$". So the string ``$upvEupv$" indicates
that the following holds:

\begin{graph}(9,1)(-7,0)
\grapharrowlength{.2} \roundnode{u}(-4,0) \roundnode{v}(-2,0)
\roundnode{u2}(0,0) \roundnode{v2}(2,0) \diredge{u}{v}
\diredge{u2}{v2} \edge{v}{u2} \autonodetext{u}[s]{$u$}
\autonodetext{v}[s]{$v$} \autonodetext{u2}[s]{$u$}
\autonodetext{v2}[s]{$v$} \bowtext{u}{v}{0.1}{$p$}
\bowtext{v}{u2}{0.1}{$E$} \bowtext{u2}{v2}{0.1}{$p$}
\end{graph}

\bigskip

This will reduce somewhat the number of graphs that need to be
drawn.

\begin{enumerate}
\item $(\Rightarrow)$:  Assume $p\in\Pt$.  Let $U$ be a (nonempty)
equivalence class of $E$.  $E|p|E=E$ implies that $p\cap U^2$ is
nonempty.  So choose $\langle u,v \rangle \in p\cap U^2$.  Then
$upvEupv$ since $\langle u,v \rangle \in p$ and $u E v$.  But
$p|E|p\subseteq \Id_E$ by hypothesis, so $\langle u,v \rangle\in\Id$
and $u=v$.  Therefore $p\cap U^2 \subseteq \Id_E$.  \\
Now suppose that $\langle u,u \rangle , \, \langle v,v \rangle \in
p\cap U^2$.  Then $upuEvpv$, so $u(p|E|p)v$, but
$p|E|p\subseteq\Id$, so $u=v$.

$(\Leftarrow)$:  Suppose for all equivalence classes $U$, $ \exists
u \ p\cap U^2 = \{\langle u,u\rangle\}$.  Show $E|p|E=E$:  The
inclusion $\subseteq$ always holds.  To show $\supseteq$, let
$\langle x,y \rangle \in E$. Then there is some equivalence class
$U$ so that $x,y\in U$. We also have $p\cap U^2 = \{\langle u,u
\rangle \}$. Then
$xEupuEy$, and so $\langle x,y \rangle \in E|p|E$.  \\
Show $p|E|p\subseteq \Id_E$: Let $\langle x,y\rangle \in p|E|p$.
Then $\langle x,x \rangle, \langle y,y \rangle \in p$ and $xpxEypy$.
But since $xEy$, $x$ and $y$ are in the same equivalence class $U$.
So then $\langle x,y\rangle\in p\cap U^2$, which implies $x=y$.
Therefore $p|E|p\subseteq \Id_E$.

\bigskip
\hrule
\bigskip

\item follows from i.

\bigskip

\item \begin{enumerate}

\item We want $E|p|R|q|E \, \cap \, E|p|S|q|E = E|p|(R\cap
S)|q|E$.  So let $\langle u,v \rangle \in E|p|R|q|E \, \cap \,
E|p|S|q|E$.  Then there exists points (1)--(8) such
that\footnote{Edges labeled $p$ are now drawn undirected in light
of ii.}

\bigskip

\bigskip

\begin{graph}(12,5)(-7,-3)
\grapharrowlength{.2} \roundnode{u}(-6,0) \roundnode{1}(-4,1)
\roundnode{2}(-2,1) \roundnode{5}(0,1) \roundnode{6}(2,1)
\roundnode{v}(4,0) \roundnode{3}(-4,-1) \roundnode{4}(-2,-1)
\roundnode{7}(0,-1) \roundnode{8}(2,-1) \autonodetext{u}[w]{$u$}
\autonodetext{v}[e]{$v$} \autonodetext{1}[n]{(1)}
\autonodetext{2}[n]{(2)} \autonodetext{5}[n]{(5)}
\autonodetext{6}[n]{(6)} \autonodetext{3}[s]{(3)}
\autonodetext{4}[s]{(4)} \autonodetext{7}[s]{(7)}
\autonodetext{8}[s]{(8)} \edge{u}{1} \bowtext{u}{1}{0.13}{$E$}
\edge{u}{3} \bowtext{u}{3}{-.13}{$E$} \edge{1}{2}
\bowtext{1}{2}{.13}{$p$} \edge{3}{4} \bowtext{3}{4}{-.13}{$p$}
\diredge{2}{5} \bowtext{2}{5}{.13}{$R$} \diredge{4}{7}
\bowtext{4}{7}{-.13}{$S$} \edge{5}{6} \bowtext{5}{6}{.13}{$q$}
\edge{7}{8} \bowtext{7}{8}{-.13}{$q$} \edge{6}{v}
\bowtext{6}{v}{.13}{$E$} \edge{8}{v} \bowtext{8}{v}{-.13}{$E$}
\put(-3,0){\oval(3,4)} \put(1,0){\oval(3,4)}
\end{graph}

\bigskip

\bigskip

By i., (1),(2),(3),(4) are all the same point.  Likewise,
(5),(6),(7),(8) are all the same point.  So then $\langle
u,v\rangle\in E|p|(R\cap S)|q|E$.  \\

The proof of $\supseteq$ is trivial by $|$-monotonicity.

\bigskip
\hrule
\bigskip

\item The proof of $E|R|p|E \, \cap \, E|p|S|E = E|R|p|S|E$ is
similar to the previous.

\bigskip
\hrule
\bigskip

\item $E|p|R|q|E = E|q|R^{-1}|p|E$:

Let $\langle u,v\rangle \in E|p|R|q|E$:

\smallskip

\begin{graph}(12,3)(-7,-1.5)
\grapharrowlength{.2} \roundnode{u}(-6,0) \roundnode{1}(-4,0)
\roundnode{2}(-2,0) \roundnode{3}(0,0) \roundnode{4}(2,0)
\roundnode{v}(4,0) \autonodetext{u}[w]{$u$}
\autonodetext{v}[e]{$v$} \edge{u}{1} \edge{1}{2} \diredge{2}{3}
\edge{3}{4} \edge{4}{v} \bowtext{u}{1}{-.13}{$E$}
\bowtext{1}{2}{-.13}{$p$} \bowtext{2}{3}{-.13}{$R$}
\bowtext{3}{4}{-.13}{$q$} \bowtext{4}{v}{-.13}{$E$}
\end{graph}

\bigskip

\bigskip

This gives

\bigskip

\bigskip

\begin{graph}(8,3)(-7,-1.5)
\grapharrowlength{.2}  \roundnode{u}(-4,0) \roundnode{1}(-2,0)
\roundnode{2}(0,0) \roundnode{v}(2,0) \autonodetext{u}[w]{$u$}
\autonodetext{v}[e]{$v$} \autonodetext{1}[s]{$x$}
\autonodetext{2}[s]{$y$} \edge{u}{1} \diredge{1}{2} \edge{2}{v}
\bowtext{u}{1}{-.13}{$E$} \bowtext{1}{2}{-.13}{$R$}
\loopedge{1}{25}(.1,.4) \loopedge{2}{25}(.1,.4)
\bowtext{u}{2}{.22}{$p$} \bowtext{1}{v}{.22}{$q$}
\bowtext{2}{v}{-.13}{$E$}
\end{graph}

\bigskip

\bigskip

Now $uEy$ and $vEx$, so we can write $uEyqyR^{-1}xpxEv$, and
$\langle u,v\rangle\in E|q|R^{-1}|P|E$.  The inclusion $\supseteq$
is similar.

%But since $u,v,(1),(2)$ are all in the same equivalence class, (1)
%and (2) are the same point, and so we have

%\bigskip

%\begin{graph}(6,3)(-7,-1.5)
%\grapharrowlength{.2}  \roundnode{u}(-2,0) \roundnode{1}(0,0)
% \roundnode{v}(2,0) \autonodetext{u}[w]{$u$}
%\autonodetext{v}[e]{$v$} \edge{u}{1} \edge{1}{v}
%\bowtext{u}{1}{-.13}{$E$} \loopedge{1}{20}(.12,.4)
%\loopedge{1}{20}(-.12,.4) \loopedge{1}{20}(0,-.4)
%\bowtext{u}{v}{.22}{$p \ \ q$} \bowtext{u}{v}{-.24}{$R$}
%\bowtext{1}{v}{-.13}{$E$}
%\end{graph}

%\bigskip

%\bigskip

So then $E|p|R|q|E = E|q|R^{-1}|p|E$.
\end{enumerate}

\bigskip
\hrule
\bigskip

\item Let $\{U_\alpha\}_{\alpha \in I}$ be the equivalence classes
of $E$.  Let $R\subseteq E$.  Let $R_\alpha = R\cap U_\alpha^2$. For
all non-empty $R_\alpha$, pick $\langle u_\alpha, v_\alpha \rangle
\in R_\alpha$. For $R_\alpha$ empty, let $\langle u_\alpha,v_\alpha
\rangle\in U_\alpha$.  Let $p:= \{\langle u_\alpha, u_\alpha \rangle
: \alpha \in I \} \ q:= \{\langle v_\alpha, v_\alpha \rangle :
\alpha \in I \}$. Then $E|R|E=E|p|R|q|E$.
\bigskip
\hrule
\bigskip

\item Let $R,S \subseteq E$.  Let $\{U_\alpha\}_{\alpha \in I}$ be
as above.  When $(R|S)\cap U_\alpha^2 \neq  \emptyset$, pick
$\langle x,y \rangle \in (R|S)\cap U_\alpha^2$.  For every such
alpha, $\exists u_\alpha\in U_\alpha$, $xRu_\alpha S y$.  Let $p:=
\{\langle u_\alpha, u_\alpha \rangle : \alpha \in I \}$.   Then
$E|R|S|E=E|R|p|S|E$.

\end{enumerate}
\end{proof}

%%%%%%%%%%%%%%%%%%%%%%%%%%%%%%%%%%%%%%%%%%%%%%%%%%%%%%%%%%%%%%%%%%%%%%%%%%%%%%%%%%%%%%%

\begin{lemma}
Let $\B = \langle B,+, \, -, \, ;, \, \conv{}, \, \1 \rangle$ be a
nondegenerate algebra of relational type.  Let $h:\Sub
\twoheadrightarrow \B$ be a homomorphism onto $\B$ with maximal
kernel. Define $\sigma : \Sub \longrightarrow \RR(\Pt)$ by

\[
\sigma(R) = \{\langle p,q \rangle \in \Pt \times \Pt :
h(E)=h(E|p|R|q|E)\}
\]

Then
\begin{enumerate}
\item $\sigma(\emptyset)=\emptyset$

\item $\sigma(E) = \Pt \times \Pt$

\item $R\subseteq S \Rightarrow \sigma(R)\subseteq \sigma(S)$

\item $\sigma (R\cup S)=\sigma(R)\cup \sigma(S)$

\item $\sigma (E\setminus R)\cap \sigma(R) = \emptyset$

\item $\sigma(E\setminus R) \cup \sigma(R) = \Pt \times \Pt$, and
consequently $\sigma(E\setminus R) = \sigma(E)\setminus \sigma(R)$

\item $\sigma (R|S) = \sigma(R)|\sigma(S)$

\item $\sigma(R^{-1})= \sigma(R)^{-1}$

\item $\sigma(\Id_E) \supseteq \textsf{Id}\cap (\Pt \times \Pt)$
\end{enumerate}

i.--ix. say that $\sigma$ is almost a homomorphism; it would be if
equality held in ix.  We will call a function that satisfies
i.--ix. a \emph{near--homomorphism}.  \\

Note: the maximality of the kernel of $h$ is needed only for iv.
and vi.
\end{lemma}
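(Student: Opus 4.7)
The strategy is to isolate one structural consequence of the maximality of $\ker h$ and use it to unlock the two places---items (iv) and (vi)---where it is actually needed, and otherwise to reduce each item to Lemma 3.2.5 and to basic properties of points. The key observation is a dichotomy: for every $A \subseteq E$ satisfying $E|A|E = A$ (an ``$E$-saturated'' set), $h(A) \in \{0, h(E)\}$. Indeed, maximality of $\ker h$ makes $\B$ simple as a relation algebra, so $1_\B;y;1_\B = 1_\B$ for every nonzero $y \in \B$; applied to $y = h(A)$ and using $h(E|A|E) = h(A)$, this forces $h(A) = h(E)$ whenever $h(A) \neq 0$. Every set of the form $A = E|p|R|q|E$ is $E$-saturated (since $E|E = E$), so the dichotomy applies throughout.

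Items (i), (ii), (iii), (v), (viii), and (ix) are essentially one-liners: (i) uses $E|p|\emptyset|q|E = \emptyset$ and nondegeneracy of $\B$; (ii) uses $E|p|E = E = E|q|E$ from the definition of a point; (iii) is monotonicity of $|$ and of $h$; (v) follows from Lemma 3.2.5(iii)(a), since $E|p|R|q|E \cap E|p|(E \setminus R)|q|E = E|p|\emptyset|q|E = \emptyset$, so a common witness would give $h(E) \cdot h(E) = 0$ in $\B$, contradicting nondegeneracy; (viii) is Lemma 3.2.5(iii)(c) with $R$ replaced by $R^{-1}$; and (ix) uses Lemma 3.2.5(i) to get $p|p = p$, reducing $E|p|\Id_E|p|E$ to $E|p|E = E$.

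For (iv), the $\supseteq$ direction is immediate from (iii). For $\subseteq$, set $A := E|p|R|q|E$ and $B := E|p|S|q|E$; distributivity of $|$ over $\cup$ makes $A \cup B = E|p|(R \cup S)|q|E$, so the hypothesis becomes $h(A) + h(B) = h(E) \neq 0$. The dichotomy gives $h(A), h(B) \in \{0, h(E)\}$, and since their join is nonzero, at least one equals $h(E)$. Item (vi) then follows from applying (iv) to the pair $R$ and $E \setminus R$, whose union is $E$, combined with (ii); the ``consequently'' clause $\sigma(E \setminus R) = \sigma(E) \setminus \sigma(R)$ then drops out once (v) is invoked.

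The most intricate item is (vii). For $\supseteq$, if $(p,r) \in \sigma(R)$ and $(r,q) \in \sigma(S)$, the point condition $r|E|r \subseteq \Id_E$ gives $(E|p|R|r|E)|(E|r|S|q|E) = E|p|R|(r|E|r)|S|q|E \subseteq E|p|(R|S)|q|E$; applying $h$ to the leftmost expression yields $h(E);h(E) = h(E)$, and this propagates to the right. For $\subseteq$, apply Lemma 3.2.5(v) to $p|R$ and $S|q$ to insert a point $r \in \Pt$ with $E|p|(R|S)|q|E = E|p|R|r|S|q|E$; then Lemma 3.2.5(iii)(b) rewrites this as the meet $E|p|R|r|E \cap E|r|S|q|E$, whose $h$-image being $h(E)$ forces each factor's $h$-image to equal $h(E)$ individually, producing the required witnesses $(p,r) \in \sigma(R)$ and $(r,q) \in \sigma(S)$. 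The main obstacle across the whole lemma is the bookkeeping of points and positions in (vii); everything else is either the simplicity-based dichotomy or a direct appeal to Lemma 3.2.5.
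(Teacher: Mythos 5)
Your proposal is correct and follows essentially the same route as the paper: every item reduces to the lemma on points (the paper's Lemma 3.2.6 --- your citations to ``3.2.5'' are off by one), and items (iv) and (vi) rest on exactly the paper's observation that maximality of $\ker h$ makes $\B$ simple, so that $h$ sends any set of the form $E|X|E$ to either $0$ or $h(E)$. The only cosmetic difference is in (vii), where you extract $h(E|p|R|r|E)=h(E)=h(E|r|S|q|E)$ simultaneously from the meet identity, whereas the paper obtains the two factors one at a time via inclusions.
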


\begin{proof}

\begin{enumerate}

\item $\sigma (\emptyset)=\{ \langle p,q \rangle :
h(E)=h(E|p|\emptyset|q|E\}=\emptyset$, since $h(E)\neq
h(\emptyset)$.

\item $E=E|p|E|q|E$ for all $p,q$, so $h(E)=h(E|p|E|q|E)$, and ii.
holds.

%\stepcounter{enumi}

%%%%%%%%%%%%%%%%%%%%%%%%%%%%%%%%%%%%%%%%%%%%%%%%%%%%%%%%%%%%%%%%%%%%%%%%%%%% iii
\item $R\subseteq S \Rightarrow \sigma(R)\subseteq \sigma(S)$:\\

Let $R\subseteq S$, so $S\cap R=R$.  Let $\langle p,q \rangle \in
\sigma(R)$.  Then $h(E)=h(E|p|R|q|E)$, and so

\begin{align*}
h(E|p|S|q|E) &= h(E|p|S|q|E \, \cap \, E) \\
             &= h(E|p|S|q|E) \cdot h(E)  &&\text{($h$ a hom.)} \\
             &= h(E|p|S|q|E) \cdot h(E|p|R|q|E) && p,q\in \sigma(R) \\
             &= h(E|p|S|q|E \, \cap \, E|p|R|q|E) &&\text{($h$ a hom.)} \\
             &= h(E|p|(R\cap S)|q|E) &&\text{(by 3.2.6)} \\
             &= h(E|p|R|q|E)  &&(S\cap R=R)  \\
             &=h(E) && p,q\in \sigma(R)
\end{align*}

\bigskip
\hrule
\bigskip
%%%%%%%%%%%%%%%%%%%%%%%%%%%%%%%%%%%%%%%%%%%%%%%%%%%%%%%%%%%%%%%%%%%%%%%%%%%%% iv
\item $\sigma (R\cup S)=\sigma(R)\cup \sigma(S)$:\\

From iii. we get $\sigma(R), \sigma(S) \subseteq \sigma(R\cup S)$,
so $\supseteq$ holds.  \\

For $\subseteq$, let $\langle p,q \rangle \in \sigma(R\cup S)$.
Then

\begin{align*}
h(E) &= h(E|p|(R\cup S)|q|E) \\
     &= h(E|p|R|q|E \, \cup \, E|p|S|q|E) && (|-\text{dist.}) \\
     &= h(E|p|R|q|E) + h(E|p|S|q|E) && \text{($h$ a hom.)}
\end{align*}

Since the kernel of $h$ is maximal, any image of $\Sub$ under $g$
(namely $\B$) is simple by general algebraic
considerations.\footnote{If $\A$ is an algebra and $\theta$ is a
congruence on $\A$, then there is a 1-1 correspondence between
congruences $\theta'\supseteq\theta$ and congruences on $\A/\theta$.
 Therefore if $\theta$ is maximal then $\A/\theta$ is simple.} For all $X\in\sub$, either $h(E|X|E)=h(E)$ or $h(E|X|E)=h(\emptyset)$.
Since $h(E)\neq h(\emptyset)$, it is not the case that $h(E|p|R|q|E)
= h(E|p|S|q|E) = h(\emptyset)$.  So at least one of $h(E|p|R|q|E), \
h(E|p|S|q|E)$ is equal to $h(E)$.  Therefore $\langle p,q \rangle$
is in either $\sigma(R)$ or $\sigma(S)$, hence in their union.

\bigskip
\hrule
\bigskip
%%%%%%%%%%%%%%%%%%%%%%%%%%%%%%%%%%%%%%%%%%%%%%%%%%%%%%%%%%%%%%%%%%%%%%%%%%%%%%%%% v
\item $\sigma (E\setminus R)\cap \sigma(R) = \emptyset$:\\

Suppose both $\sigma (E\setminus R)$, $\sigma(R)$ are nonempty.
(Otherwise there is nothing to show.)  Take $\langle p,q \rangle
\in \sigma(E\setminus R)$.  So $h(E) = h(E|p|(E\setminus r)|q|E)$.
Then

\begin{align*}
h(E|p|R|q|E) &= h(E|p|R|q|E) \cdot h(E) \\
     &= h(E|p|R|q|E) \cdot h(E|p|(E\setminus R)|q|E) &&\text{(hypothesis)} \\
     &= h(E|p|R|q|E \cap E|p|(E\setminus R)|q|E) && \text{($h$ a hom.)} \\
     &= h(E|p|(R \cap (E\setminus R))|q|E) && \text{(3.2.6)} \\
     &= h(E|p|\emptyset|q|E) \\
     &= h(\emptyset)\neq h(E)
\end{align*}

So $\langle p,q \rangle \notin \sigma(R)$.  Thus
$\sigma(E\setminus R) \subseteq \Pt^2 \setminus \sigma(R)$, and
the intersection is empty.

\bigskip
\hrule
\bigskip
%%%%%%%%%%%%%%%%%%%%%%%%%%%%%%%%%%%%%%%%%%%%%%%%%%%%%%%%%%%%%%%%%%%%%%%%%%%%%%%%%% vi
\item $\sigma(E\setminus R) \cup \sigma(R) = \Pt \times \Pt$, and
consequently $\sigma(E\setminus R) = \sigma(E)\setminus \sigma(R)$:\\

$\sigma(E\setminus R) \cup \sigma(R) = \Pt^2$ follows from iv.
From $\sigma (E\setminus R)\cap \sigma(R) = \emptyset$ and
$\sigma(E\setminus R) \cup \sigma(R) = \Pt^2$ it follows that
$\sigma(E\setminus R)$ is the boolean complement of $\sigma(R)$,
hence $\sigma(E\setminus R) = \sigma(E)\setminus \sigma(R)$.

\bigskip
\hrule
\bigskip
%%%%%%%%%%%%%%%%%%%%%%%%%%%%%%%%%%%%%%%%%%%%%%%%%%%%%%%%%%%%%%%%%%%%%%%%%%% vii
\item $\sigma (R|S) = \sigma(R)|\sigma(S)$:\\

Let $\langle p,q \rangle \in \sigma(R|S)$.  So
$h(E)=h(E|p|R|S|q|E)$.  By v. of lemma 3.2.6, $\exists \, r \in\Pt \
E|p|R|S|q|E=E|p|R|r|S|q|E$.  Therefore $E|p|R|S|q|E=E|p|R|r|S|q|E
\subseteq E|p|R|r|E \subseteq E$ (since $S|q|E\subseteq E$), and
$E|p|R|S|q|E=E|p|R|r|S|q|E \subseteq E|r|S|q|E \subseteq E$ (since
$E|p|R \subseteq E$).  So then

\begin{align*}
h(E) &= h(E|p|R|S|q|E) \\
     &= h(E|p|R|S|q|E \, \cap \, E|p|R|r|E) && 3.2.6\\
     &= h(E|p|R|S|q|E)\cdot h(E|p|R|r|E) && h\text{ a hom.}\\
     &= h(E) \cap h(E|p|R|r|E) &&\text{hyp.} \\
     &= h(E|p|R|r|E) && h\text{ a hom.}
\end{align*}

And so $\langle p,r \rangle \in \sigma(R)$. Similarly, $\langle
r,q \rangle \in \sigma(S)$.  So $\langle p,q
\rangle \in \sigma(R)|\sigma(S)$.\\

Conversely, let $\langle p,q \rangle \in \sigma(R)|\sigma(S)$.
Then $\exists \, r \in \Pt \ \langle p,r \rangle \in \sigma(R), \
\langle r,q \rangle \in \sigma(S)$.  So
$h(E|p|R|r|E)=h(E)=h(E|r|S|q|E)$.  Then

\begin{align*}
h(E) &= h(E|E) \\
     &= h(E);h(E) && h\text{ a hom.}\\
     &= h(E|p|R|r|E);h(E|r|S|q|E) && \text{hyp.}\\
     &= h(E|p|R|r|E|E|r|S|q|E) && h\text{ a hom.}\\
     &= h(E|p|R|r|S|q|E) &&(r|E|E|r = r|E|r = r, \ r\in\Pt) \\
     &= h(E|p|R|S|q|E) && \text{hyp.}
\end{align*}

%So $\langle p,q \rangle \in \sigma(R|r|S)$.  Now $R|r|S \subseteq
%R|\Id_E |S =R|S$, and so $\sigma(R|r|S)\subseteq \sigma(R|S)$ by
%monotonicity of $\sigma$ (iii. above).
Hence $\langle p,q \rangle
\in \sigma(R|S)$.

\bigskip
\hrule
\bigskip
%%%%%%%%%%%%%%%%%%%%%%%%%%%%%%%%%%%%%%%%%%%%%%%%%%%%%%%%%%%%%%%%%%%%%%%%%%%% viii
\item $\sigma(R^{-1})= \sigma(R)^{-1}$:
\begin{align*}
\langle q,p \rangle \in \sigma(R^{-1}) &\iff
h(E)=h(E|q|R^{-1}|p|E) = h(E|p|R|q|E) \\
&\iff \langle p,q \rangle \in \sigma(R) \\
&\iff \langle q,p \rangle \in \sigma(R)^{-1}
\end{align*}

\bigskip
\hrule
\bigskip
%%%%%%%%%%%%%%%%%%%%%%%%%%%%%%%%%%%%%%%%%%%%%%%%%%%%%%%%%%%%%%%%%%%%%%%%%%%%%% ix
\item $\sigma(\Id_E) \supseteq \textsf{Id}\cap (\Pt \times \Pt)$:

If $\langle p,p \rangle \in \textsf{Id}\cap \Pt^2$, then
$h(E)=h(E|p|\Id_E|p|E)$ since $E=E|p|E = E|p|\Id_E|E =
E|p|\Id_E|(E|p|E) = E|p|\Id_E|p|E$.  So then $\langle p,p \rangle
\in \sigma(\Id_E)$.  \qedhere

\end{enumerate}
\end{proof}

\begin{lemma}
Let $\A = \langle A, +, \, -, \, ;, \, \conv{}, \, \1 \rangle$ with
$\A\models \1\,\conv{}=\1 \meet x;\1=\1;x=x$.  Let $g:A
\longrightarrow \rr(U)$ be a near-homomorphism for some set $U$.
Then $g(\1)$ is an equivalence relation and $h:\A \longrightarrow
\RR\big(U / g(\1)\big)$  given by $h(a) = \{ \big\langle r / g(\1) ,
s / g(\1) \big\rangle : \langle r,s \rangle \in g(a) \}$ is a
homomorphism. Furthermore, $h$ is injective if $g$ is.
\end{lemma}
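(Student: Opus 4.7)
The plan is to verify four claims in sequence: (a) $g(\1)$ is an equivalence relation on $U$; (b) $h$ is well-defined as a function into $\RR(U/g(\1))$; (c) $h$ preserves each fundamental operation; (d) $h$ is injective whenever $g$ is. The engine driving the whole argument is the closure identity
\[
g(a) \;=\; g(\1)\,|\,g(a)\,|\,g(\1),
\]
which comes from property vii of near-homomorphisms applied to the hypothesis $\1;a;\1=a$.

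First I would establish that $g(\1)$ is an equivalence relation on $U$. Reflexivity follows from property ix, $g(\1)\supseteq\Id\cap(U\times U)$. Symmetry follows from viii together with the hypothesis $\conv{\1}=\1$, via $g(\1)^{-1}=g(\conv{\1})=g(\1)$. Transitivity follows from vii together with $\1;\1=\1$ (itself a consequence of $x;\1=x$), giving $g(\1)\,|\,g(\1)=g(\1;\1)=g(\1)$.

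Next, the closure identity tells us that membership in $g(a)$ depends only on the $g(\1)$-equivalence classes of its coordinates, so the formula defining $h$ is unambiguous. For the homomorphism properties, preservation of $+$, $;$, and $\conv{}$ follow directly from iv, vii, viii, with the closure identity invoked in the $;$ case to shift representatives through a common intermediate equivalence class. The constant $\1$ is preserved because $\langle r,s\rangle\in g(\1)$ iff $r$ and $s$ represent the same class, so $h(\1)$ is the diagonal of $U/g(\1)$. Preservation of complement is the one delicate step: from vi, $g(-a)$ and $g(a)$ partition $U\times U$, so their images fill out $(U/g(\1))\times(U/g(\1))$, and combining with $g(-a)\cap g(a)=\emptyset$ (which passes cleanly to the quotient via the closure identity applied to both $a$ and $-a$) forces $h(-a)$ to be the boolean complement of $h(a)$.

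Finally, for injectivity, assume $g$ is injective and $h(a)=h(b)$. Given $\langle r,s\rangle\in g(a)$, the equality $h(a)=h(b)$ produces some $\langle r',s'\rangle\in g(b)$ with $\langle r',r\rangle$ and $\langle s,s'\rangle$ in $g(\1)$; the closure identity for $g(b)$ then yields $\langle r,s\rangle\in g(b)$. By symmetry $g(a)=g(b)$, and injectivity of $g$ gives $a=b$. The real obstacle is isolating the closure identity and then using it twice, once to make $h$ well-defined and once to force $h(-a)$ to exhaust the rest of $(U/g(\1))\times(U/g(\1))$; everything else reduces to routine unpacking of the near-homomorphism axioms.
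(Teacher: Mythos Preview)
Your proposal is correct and follows essentially the same approach as the paper's proof: verify that $g(\1)$ is an equivalence relation via the near-homomorphism properties, then check preservation of each operation by unwinding the definition of $h$, and finally derive injectivity from the identity $g(\1)|g(b)|g(\1)=g(\1;b;\1)=g(b)$. Your explicit isolation of the closure identity $g(a)=g(\1)|g(a)|g(\1)$ up front---and its use to secure well-definedness and the complement case---is actually a bit more careful than the paper, which leaves this implicit in the biconditional chains and only writes it out in the injectivity step; but the substance is the same.
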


\begin{proof}
First we show that $g(\1)$ is an equivalence relation:
$g(\1)|g(\1)^{-1} = g(\1)|g(\1\,\conv{}) = g(\1)|g(\1) = g(\1;\1) =
g(\1)$, hence $g(\1)$ is an equivalence relation over its field.
From now on we will denote $g(\1)$ by $E$.  \\
Now we show that $h$ is a homomorphism.

$h(a+b) = h(a) \cup h(b)$:
\begin{align*}
\langle r/E, s/E \rangle \in h(a+b) &\iff \langle r,s \rangle \in
g(a+b) = g(a)\cup g(b) \\
         &\iff \langle r,s \rangle \in g(a) \text{ OR } \langle r,s \rangle \in g(b) \\
         &\iff \langle r/E, s/E \rangle \in h(a) \text{ OR } \langle r/E, s/E \rangle \in h(b) \\
         &\iff \langle r/E, s/E \rangle \in h(a) \cup h(b)
\end{align*}

$h(\bar{a}) = (U/E)^2\setminus h(a)$:
\begin{align*}
\langle r/E, s/E \rangle \in h(\bar{a}) &\iff \langle r,s \rangle
\in
g(\bar{a}) = U^2\setminus g(a) \\
         &\iff \langle r,s \rangle \notin g(a) \text{ AND } r,s \in U\\
         &\iff \langle r/E, s/E \rangle \notin h(a) \text{ AND } r/E,s/E \in U/E \\
         &\iff \langle r/E, s/E \rangle \in (U/E)^2\setminus h(a)
\end{align*}

$h(a;b)=h(a)|h(b)$:
\begin{align*}
\langle r/E, s/E \rangle \in h(a;b) &\iff \langle r,s \rangle \in
g(a;b) = g(a)|g(b) \\
         &\iff \exists t \ \langle r,t \rangle \in g(a) \text{ AND } \langle t,s \rangle \in g(b) \\
         &\iff \exists t \ \langle r/E, t/E \rangle \in h(a) \text{ AND } \langle t/E, s/E \rangle \in h(b) \\
         &\iff \langle r/E, s/E \rangle \in h(a)|h(b)
\end{align*}

$h(\conv{a}) = h(a)^{-1}$:
\begin{align*}
\langle r/E, s/E \rangle \in h(\conv{a}) &\iff \langle r,s \rangle
\in
g(\conv{a}) = g(a)^{-1} \\
         &\iff \langle s,r \rangle \in g(a)  \\
         &\iff \langle s/E, r/E \rangle \in h(a) \\
         &\iff \langle r/E, s/E \rangle \in h(a)^{-1}
\end{align*}

$h(\1)=\textsf{Id}\cap (U/E)^2$:
\begin{align*}
\langle r/E, s/E \rangle \in h(\1) &\iff \langle r,s \rangle \in
g(\1) = E \text{, \footnotesize{(an equivalence relation)}}\\
         &\iff r E s \\
         &\iff  r/E = s/E \\
         &\iff \langle r/E, s/E \rangle \in \textsf{Id}\cap (U/E)^2
\end{align*}

Now we suppose that $g$ is 1-1, and prove that $h$ is also.  Let
$a,b\in A$, $a\neq b$.  Then $g(a)\neq g(b)$.  Suppose without loss
of generality that $g(a)\setminus g(b)\neq\emptyset$.  We want to
show that $h(a)\neq h(b)$.  It will suffice to show that if $\langle
x,y\rangle \in g(a)\setminus g(b)$, then $\langle x/E,y/E \rangle
\in h(a)\setminus h(b)$.  So let $\langle x,y\rangle \in
g(a)\setminus g(b)$.  We want $\langle x/E,y/E \rangle$ to be
distinct from all $\langle r/E,s/E \rangle$, where $\langle r,s
\rangle\in g(b)$.  Suppose by way of contradiction that there is
some $\langle r,s \rangle \in g(b)$ so that $\langle x/E,y/E \rangle
=\langle r/E,s/E \rangle$.  Then $xEr$ and $yEs$.  So we have

\bigskip

\begin{graph}(6,4)(-6,-2)
\grapharrowlength{.2} \roundnode{x}(-2,0) \roundnode{r}(0,0)
\roundnode{y}(-2,-2) \roundnode{s}(0,-2) \edge{x}{r} \edge{y}{s}
\diredge{x}{y} \diredge{r}{s} \autonodetext{x}[nw]{$x$}
\autonodetext{r}[ne]{$r$} \autonodetext{y}[sw]{$y$}
\autonodetext{s}[se]{$s$} \bowtext{x}{r}{0.12}{$E$}
\bowtext{y}{s}{-0.12}{$E$} \bowtext{x}{y}{-0.22}{$g(a)$}
\bowtext{r}{s}{0.22}{$g(b)$}
\end{graph}

\bigskip

So then
\begin{align*}
\langle x,y \rangle \in E|g(b)|E &= g(\1)|g(b)|g(\1) && E=g(\1) \\
                                 &= g(\1;b;\1) && g \text{ a hom.}
                                 \\
                                 &= g(b)
\end{align*}

This stands in contradiction to the assumption that $\langle
x,y\rangle \in g(a)\setminus g(b)$.  Therefore $\langle x/E,y/E
\rangle \in h(a)\setminus h(b)$, and $h(a)\neq h(b)$, and so $h$ is
1-1 also. \qedhere
\end{proof}

\begin{lemma} Let $g:\Sub \twoheadrightarrow \B$ be a homomorphism
onto a non-degenerate algebra $\B$ such that $g$ has a maximal
kernel.  Then $\B\in\textsf{RRA}$.
\end{lemma}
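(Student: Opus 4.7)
The plan is to take the near-homomorphism $\sigma:\Sub\to\rr(\Pt)$ supplied by Lemma 3.2.7 (applied with our $g$ in the role of the $h$ used there), factor it through $g$ to obtain a near-homomorphism $\tau$ defined on $\B$, and then invoke Lemma 3.2.8 to upgrade $\tau$ into an embedding of $\B$ into a proper relation algebra.

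The factorization step works because $g$ is itself a relation-algebra homomorphism: for any $R,S\in\sub$ with $g(R)=g(S)$ and any $p,q\in\Pt$,
\[
g(E|p|R|q|E) \;=\; g(E);g(p);g(R);g(q);g(E) \;=\; g(E);g(p);g(S);g(q);g(E) \;=\; g(E|p|S|q|E),
\]
so the defining condition of $\sigma$ reads the same for $R$ and $S$, giving $\sigma(R)=\sigma(S)$. Hence $\tau:\B\to\rr(\Pt)$ with $\tau(g(R)):=\sigma(R)$ is well-defined, and because $g$ is surjective the conditions (i)--(ix) of Lemma 3.2.7 transfer verbatim from $\sigma$ to $\tau$, making $\tau$ itself a near-homomorphism.

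Since $\textsf{RA}$ is an equational class, $\B$ is a relation algebra as a homomorphic image of $\Sub$, and in particular satisfies $\1\,\conv{}=\1$ and $\1;x=x;\1=x$ by axiom (1.6) together with Theorem 2.2.5. Lemma 3.2.8 then produces a genuine homomorphism $h:\B\to\RR\bigl(\Pt/\tau(\1)\bigr)$. To see that $h$ is injective, note that maximality of $\ker g$ makes $\B$ simple, so $\ker h$ is either $\{0\}$ or all of $B$; the latter would force $h(1)=\emptyset$ and hence $\Pt=\emptyset$, but $\B$ being non-degenerate forces $E\neq\emptyset$, and selecting a single reflexive pair from each equivalence class of $E$ produces a point of $E$ by Lemma 3.2.6(i). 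Thus $h$ is an embedding of $\B$ into the proper relation algebra $\RR(\Pt/\tau(\1))$, so $\B\in\textsf{RRA}$. The only step requiring any care is the factorization of $\sigma$ through $g$; the rest is essentially bookkeeping between Lemmas 3.2.7 and 3.2.8.
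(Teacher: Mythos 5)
Your construction of the near-homomorphism on $\B$ is exactly the paper's: the paper forms the relational composition $g^{-1}|\sigma$, which is precisely your $\tau$, and your explicit check that $g(R)=g(S)$ forces $\sigma(R)=\sigma(S)$ is just the verification the paper leaves to the reader (``it is easy to check that $g^{-1}|\sigma$ is functional''). Where you genuinely diverge is the injectivity step. The paper proves directly that $g^{-1}|\sigma$ is one-to-one: given $b\neq c$ with preimages $R,S$, it uses Lemma 3.2.6(iv) to choose $p,q\in\Pt$ with $E=E|p|[R\cap(E\setminus S)]|q|E$, deduces $g(E|p|R|q|E)=g(E)\neq g(E|p|S|q|E)$ so that $\langle p,q\rangle\in\sigma(R)\setminus\sigma(S)$, and then relies on the clause of Lemma 3.2.8 saying the induced homomorphism is injective whenever the near-homomorphism is. You instead push injectivity to the very end: $\B$ is simple because $\ker g$ is maximal, so the homomorphism $h$ produced by Lemma 3.2.8 is either injective or collapses $\B$, and the collapse is impossible because $h(1)$ must be the unit $(\Pt/\tau(\1))^2$ of the target, which is nonempty since picking one reflexive pair per equivalence class of $E$ yields a point by Lemma 3.2.6(i). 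Both arguments are sound. The paper's route produces a concrete pair of points separating $\sigma(R)$ from $\sigma(S)$ (and is the reason clause (iv) of Lemma 3.2.6 was proved at all); yours is shorter and reuses the simplicity observation already recorded in the footnote to the proof of Lemma 3.2.7(iv), at the modest cost of the side remark that $\RR(U)$ admits no one-element subalgebra when $U\neq\emptyset$.
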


\begin{proof} Let $\sigma: \sub \longrightarrow \rr(\Pt)$, $\sigma(R)=\{\langle p,q\rangle \in \Pt:g(E)=g(E|p|R|q|E)\}$.   Consider $g^{-1}|\sigma \subseteq B \times \rr(\Pt)$.

\[
\begin{diagram}
\node{\Sub} \arrow{s,l,A}{g} \arrow{e,t,A}{\sigma} \node{\RR(\Pt)}  \\
  \node{\B} \arrow{ne,r,..}{g^{-1}|\sigma} \\
\end{diagram}
\]

Note that since $g$ is surjective, the domain of $g^{-1}|\sigma$
is all of $B$.  It is easy to check that $g^{-1}|\sigma$ is
functional and is a near-homomorphism (just use the definition of
$\sigma$).  Then by the previous lemma, there is an $f: \B
\longrightarrow \RR\big(\Pt / g(\Id_E)\big)$ that is a
homomorphism, and $f$ is 1-1 if $g^{-1}|\sigma$ is 1-1.

$g^{-1}|\sigma$ is 1-1:  Let $b,c \in B, \ b\neq c$.  Let $R
\stackrel{g}{\longmapsto} b$, $S \stackrel{g}{\longmapsto} c$.
$R\neq S$, so either $R\cap (E\setminus S) \neq\emptyset$ or $S\cap
(E\setminus R) \neq\emptyset$.  Suppose that $R\cap (E\setminus S)
\neq\emptyset$. Choose $p,q\in \Pt, \ E=E|p|[R\cap(E\setminus
S)]|q|E$.  Then

\begin{align*}
g(E) &= g(E|p|R|q|E \, \cap \, E|p|(E\setminus S)|q|E) \\
     &= g(E|p|R|q|E)\cdot g(E|p|(E\setminus S)|q|E)
\end{align*}

Hence $g(E|p|R|q|E)=g(E)=g(E|p|(E\setminus S)|q|E)$, and so
$\langle p,q \rangle \in \sigma(R)\setminus\sigma(S) =
g^{-1}|\sigma(b) \setminus g^{-1}|\sigma(c)$.  Therefore
$g^{-1}|\sigma(b) \setminus g^{-1}|\sigma(c) \neq \emptyset$, and
hence $g^{-1}|\sigma(b) \neq g^{-1}|\sigma(c)$.

We then conclude that $f$ is 1-1 also; so $f$ embeds $\B$ into a
square relation algebra:  $\B\iso|\subseteq \RR\big(\Pt /
g(\Id_E)\big)$. \qedhere

\end{proof}

\begin{thm}  $\hh\{\Sub : E=E|E^{-1} \}\subseteq\textsf{RRA}$. \end{thm}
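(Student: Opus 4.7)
The plan is to realize $\B$ as a subdirect product of simple quotients of $\Sub$, to which Lemma 3.2.9 applies, and then invoke closure of $\textsf{RRA}$ under $\ss$ and $\pp$ (Theorems 3.1.1 and 3.1.2). Fix a surjective homomorphism $g : \Sub \twoheadrightarrow \B$ and assume $\B$ is non-degenerate, since the degenerate case is trivial.

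For each nonzero $b \in \B$, pick a representative $R_b \in g^{-1}(b)$; thus $R_b$ is not in the relational ideal $\ker g$. The central step is to produce a maximal relational ideal $M_b$ of $\Sub$ with $\ker g \subseteq M_b$ and $R_b \notin M_b$. I would do this by translating to the index set $I$ of equivalence classes $U_\alpha$ of $E$: relational ideals of $\Sub$ correspond bijectively to set-theoretic ideals on $I$ via the map $J \longleftrightarrow \{F \subseteq I : \bigcup_{\alpha \in F} U_\alpha^2 \in J\}$, and under this correspondence maximal relational ideals of $\Sub$ match ultrafilters on $I$. Since $R_b \notin \ker g$, its support $\{\alpha \in I : R_b \cap U_\alpha^2 \neq \emptyset\}$ lies outside the set-theoretic ideal on $I$ that corresponds to $\ker g$, so the Boolean prime ideal theorem furnishes an ultrafilter containing this support and disjoint from that ideal; the matching $M_b$ is the desired maximal relational ideal.

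With $M_b$ in hand, the quotient map $\Sub \twoheadrightarrow \Sub/M_b$ has maximal kernel, so Lemma 3.2.9 places $\Sub/M_b$ in $\textsf{RRA}$. Since $\ker g \subseteq M_b$, this quotient factors through $\B$ to give a homomorphism $h_b : \B \twoheadrightarrow \Sub/M_b$ with $h_b(b) \neq 0$. The product map $h : \B \to \prod_{b \neq 0} \Sub/M_b$ is then injective coordinate-wise, and its codomain lies in $\pp\,\textsf{RRA} = \textsf{RRA}$ by Theorem 3.1.2, so by Theorem 3.1.1 we conclude $\B \in \ss\,\textsf{RRA} = \textsf{RRA}$.

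The main obstacle will be guaranteeing the existence of the maximal relational ideal $M_b$ with the required properties. A direct Zorn's lemma argument on ``proper relational ideals containing $\ker g$ and avoiding $R_b$'' produces only an ideal maximal within that collection, which need not be a maximal relational ideal of $\Sub$---the condition Lemma 3.2.9 requires. Passing to ultrafilters on the index set of equivalence classes circumvents this subtlety by appealing directly to the Boolean prime ideal theorem.
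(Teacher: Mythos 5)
Your proposal is correct, but the two key steps are handled by genuinely different means than in the paper. First, you separate each nonzero $b$ from $0$ (via a maximal relational ideal containing $\ker g$ and omitting a preimage $R_b$), whereas the paper separates each pair $b\neq c$ by arranging that $R\vartriangle S\notin J$ where $g(R)=b$, $g(S)=c$; for boolean algebras with operators these are interchangeable, since $h(b)\neq h(c)$ iff $h(b\vartriangle c)\neq 0$. Second, and more substantively, you obtain the maximal relational ideal by identifying the lattice of relational ideals of the full algebra $\Sub$ with the lattice of set-theoretic ideals on the index set $I$ of equivalence classes (using that $R\in J$ iff $E|R|E\in J$, and that closures are exactly unions of squares $U_\alpha^2$), so that maximal relational ideals are duals of ultrafilters on $I$ and the Boolean prime ideal theorem does the separation; this correspondence does check out, and the generated filter is proper precisely because the ideal corresponding to $\ker g$ is closed under finite unions. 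The paper instead sidesteps the very pitfall you flag (an ideal maximal among those avoiding $R\vartriangle S$ need not be a maximal relational ideal) by a different device: it adjoins $T=\overline{E|(R\vartriangle S)|E}$ to $\ker g$, verifies that the resulting relational ideal $I'$ is proper, and extends $I'$ by Zorn's lemma to a genuinely maximal relational ideal $J$; since $J$ is proper and contains $T$, it automatically excludes $E|(R\vartriangle S)|E$ and hence $R\vartriangle S$. Your route buys a clean structural picture of the relational ideals of a full $\Sub$ (and shows the maximal ones are exactly the ultrafilter quotients), at the cost of having to verify the bijection; the paper's $T$-trick is more self-contained and does not depend on $\Sub$ being full. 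Both then finish identically via Lemma 3.2.9 and $\ss\pp\,\textsf{RRA}=\textsf{RRA}$.
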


\begin{proof}
Let $\B \in \hh\{\Sub : E=E|E^{-1} \}$.  Then there is some
homomorphism $g:\Sub \twoheadrightarrow \B$ for some $E$.  Let $I=
\text{ker }g = g^{-1}[0]$.  I is a relational ideal.  Let $b,c \in
B, \ b\neq c$.  Then $\exists R,S \subseteq E, \
R\stackrel{g}{\longmapsto} b , \ S\stackrel{g}{\longmapsto} c$. Let
$T:=\overline{E|(R\vartriangle S)|E}$.

If $T\in I$, then extend $I$ to a maximal relational ideal $J$ (use
Zorn's Lemma).  If $T\notin I$, then define $I'=\{X\subseteq E :
\exists X_1 \in I, \ X\subseteq X_1  \cup  E|T|E \}$.  It is
straightforward to check that $I'$ is a relational ideal containing
$I$.  To see that $I'$ is proper, suppose the contrary, so that
$E\in I'$. Then there is some $X_1\in I$ so that $E=X_1 \cup E|T|E$.
Then $X_1 \supseteq \overline{E|T|E}$, and so

\begin{align*}
  X_1 &\supseteq \overline{E|T|E} \\
      &= \overline{E|\overline{E|(R\vartriangle S)|E}|E} &&\text{def. of $T$} \\
      &= E|(R\vartriangle S)|E
      &&\overline{E\Big|\overline{E|X|E}\Big|E}=E|X|E
\end{align*}

But $X_1\in I$, and since $E|(R\vartriangle S)|E \subseteq X_1$, we
have that $E|(R\vartriangle S)|E\in I$ also, and consequently that
$R\vartriangle S \in I$.  But that means that $g(R)=g(S)$, contrary
to assumption.  Therefore $I'$ is proper.  Since $I'$ is proper, it
is included in a maximal relational ideal $J$ (use Zorn's Lemma).
Thus whether or not $T\in I$, we get a maximal relational ideal
$J\supseteq I\cup\{T\}$.

 Then $\exists h_J:\Sub
\twoheadrightarrow \Sub / J$ with (maximal) kernel $J$.  By the
previous lemma, $\Sub / J$ is isomorphic to a square proper relation
algebra $\Q_{b,c} \subseteq \RR (U)$.  Then $g^{-1}|h_J:\B
\longrightarrow \Q_{b,c}$ is a homomorphism that separates $b,c$:

Suppose for $R,S \in \sub$, $g(R)=g(S)$.  Then $g(R\vartriangle
S)=0$. So $R\vartriangle S \in I\subseteq J$.  Now $R\vartriangle S
\in J$, so $h_J(R\vartriangle S) =0$, and $h_J(R) = h_J(S)$.  So
$g^{-1}|h_J$ is functional.  It is straightforward to check that
$g^{-1}|h_J$ is a homomorphism.  To show that $g^{-1}|h_J$ separates
$b,c$, recall that $g(R)=b, \ g(S)=c$, and $J\supseteq I\cup\{T\}$.
Thus

\begin{align*}
T=\overline{E|(R\vartriangle S)|E}\in J &\Longrightarrow
\overline{E|(R\vartriangle S)|E} \stackrel{h_J}{\longmapsto} 0 \\
       &\Longrightarrow E|(R\vartriangle S)|E \stackrel{h_J}{\longmapsto}
       1 \\
       &\Longrightarrow \text{ it is not the case that } (R\vartriangle S \stackrel{h_J}{\longmapsto}
       0) \\
       &\Longrightarrow R\vartriangle S \notin J \\
       &\Longrightarrow h_J(R)\neq h_J(S)
\end{align*}

So for each $b\neq c$ we get a separating homomorphism
$g^{-1}|h_J$ to a square proper relation algebra.  Thus we have a
homomorphism

\[
h:\B \longrightarrow \prod_{\substack{b,c\in\B
\\ b\neq c}} \Q_{b,c}
\]

given by $h(x) = \langle g^{-1}|h_J(x) : b,c \in \B, \ b\neq c
\rangle$. $h$ is an embedding into a product of proper relations
algebras. Hence $\B \in \ss\pp\textsf{RRA} = \textsf{RRA}$. \qedhere
\end{proof}

\begin{thm} $\hh\textsf{RRA}=\textsf{RRA}$. \end{thm}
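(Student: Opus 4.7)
The plan is to chain together the three main results already established in this chapter: closure of $\textsf{RRA}$ under $\ss$ (Theorem 3.1.1), the commutation identity $\hh\ss\{\Sub : E=E|E^{-1}\} = \ss\hh\{\Sub : E=E|E^{-1}\}$ (Theorem 3.2.4), and the inclusion $\hh\{\Sub : E=E|E^{-1}\} \subseteq \textsf{RRA}$ (Theorem 3.2.9). The reverse inclusion $\textsf{RRA} \subseteq \hh\textsf{RRA}$ is immediate, since any algebra is the image of itself under the identity homomorphism, so the work is all in the forward direction.

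First I would unfold the definition of $\textsf{RRA}$ as $\ss\{\Sub : E=E|E^{-1}\}$, giving
\[
\hh\textsf{RRA} = \hh\ss\{\Sub : E=E|E^{-1}\}.
\]
Next I would apply Theorem 3.2.4 to swap the operators, obtaining
\[
\hh\ss\{\Sub : E=E|E^{-1}\} = \ss\hh\{\Sub : E=E|E^{-1}\}.
\]
Then Theorem 3.2.9 lets me replace the inner class by $\textsf{RRA}$, yielding
\[
\ss\hh\{\Sub : E=E|E^{-1}\} \subseteq \ss\textsf{RRA},
\]
and finally Theorem 3.1.1 collapses this to $\textsf{RRA}$. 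Combined with the trivial reverse inclusion, this gives the desired equality.

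There is no real obstacle here; all the hard work has already been done. The homomorphism–subalgebra interchange via the congruence extension property was the genuinely technical step (Lemmas 3.2.2 and 3.2.3), and the representability of $\hh\{\Sub : E=E|E^{-1}\}$ required the entire points-of-$E$ machinery and the near-homomorphism construction (Lemmas 3.2.6–3.2.8 and Theorem 3.2.9). Once those are in hand, the present theorem is a one-line diagram chase. As a corollary (via Birkhoff's theorem cited in the chapter introduction, together with Theorems 3.1.1 and 3.1.2), $\textsf{RRA}$ is an equational class, which is the main point of the chapter.
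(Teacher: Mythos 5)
Your proof is correct and is essentially identical to the paper's own argument: both unfold $\textsf{RRA}$ as $\ss\{\Sub : E=E|E^{-1}\}$, apply the interchange $\hh\ss\{\Sub : E=E|E^{-1}\}=\ss\hh\{\Sub : E=E|E^{-1}\}$, then use $\hh\{\Sub : E=E|E^{-1}\}\subseteq\textsf{RRA}$ and $\ss\textsf{RRA}=\textsf{RRA}$. The only cosmetic difference is that the paper phrases the chase element-wise, starting from an arbitrary $\B\in\hh\textsf{RRA}$, while you work at the level of class operators.
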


\begin{proof}

$\B \in \hh\textsf{RRA}$.  $\exists \A \in \textsf{RRA}, \ \B\in
\hh\{\A\}$.  Now $\A \iso|\subseteq\Sub$ for some $E$, and so
$\B\in \hh\ss\{\Sub : E=E|E^{-1} \}$.  So then $\B \in
\hh\ss\{\Sub: E=E|E^{-1}\} = \ss\hh\{\Sub : E=E|E^{-1} \}
\subseteq \ss\textsf{RRA} = \textsf{RRA}$. \qedhere
\end{proof}

We now conclude that
$\textsf{RRA}=\hh\textsf{RRA}=\ss\textsf{RRA}=\pp\textsf{RRA}$, and
consequently that $\textsf{RRA}$ is definable by equations by
Birkhoff's theorem (see \cite{Birk}, the appendix).

%\renewcommand{\theenumi}{\arabic{enumi}}
%\renewcommand{\labelenumi}{[\theenumi]}

%auto-ignore
\chapter{Other axiomatizations of RRA}

In the previous chapter we saw that \textsf{RRA} has an equational
axiomatization.  Now we ask whether \textsf{RRA} is axiomatizable
by finitely many equations, or finitely many first order
sentences, or by infinitely many equations but using only finitely
many variables.  The answer to each will be ``no.''  First we
establish the existence of a countable set of finite algebras with
special properties.

\section{Relation algebras and projective geometries}

Both Lyndon and J\'{o}nsson developed connections between
projective geometry and relation algebra.  See \cite{Jon59} and
\cite{Lyn61}. The following definition is from \cite{Lyn61}.

\begin{defn}
A  \emph{(projective) geometry} is a set a points $P$ and a set a
lines $L$ such that $\ell \in L \Rightarrow \ell \subseteq P$ and
satisfying

\begin{enumerate}
\item $L\neq \emptyset$, and $\forall \ell \in L, \ |\ell| \geq
4$.

\item $\forall p,q\in P, \ p\neq q, \ \exists ! \ell \in L \
p,q\in\ell$.  We write $\ell = \overline{pq}$.

\item if $p,q,r\in P, \ p\neq q \neq r \neq p$, and $\exists \ell\in
L \ \ell \cap \overline{pq} \neq \emptyset, \ \ell\cap
\overline{pr}\neq \emptyset$, but $(\ell \cap
\overline{pq})\cap(\ell \cap \overline{pr})=\emptyset$, then $\ell
\cap \overline{qr} \neq\emptyset$.
\end{enumerate}
A \emph{projective line} is a geometry such that $|L|=1$.

\noindent A \emph{projective plane} is a geometry such that every
line contains $n+1$ points, and every point lies on $n+1$ lines.
$n$ is said to be the \emph{order} of the plane.
\end{defn}

We are interested in building relation algebras from projective
lines.  Given a finite projective line $G(P,L)$, Let $\imath\notin
P$, and let $\A(G) = \langle \text{Sb}(P\cup\{\imath\}), \cup, \, -,
\, ;,\, \conv{}, \, \{\imath\} \rangle$, where conversion is
identification ($\conv{x}=x$), and ; is given on the atoms ($\imath$
and all $p\in P$) by $\{p\};\{p\}=\{p\}\cup \{\imath\}$ and for
$p\neq q$, $\ds \{p\};\{q\} = \{r : p\neq r \neq q\} $.

Now we wish to give a more abstract (and more general) definition.
Let $\gamma\subseteq \{1,2,3\}$.  Then $\underline{E}_\alpha^\gamma$
is a complete atomic symmetric integral relation algebra on $\alpha$
atoms. For finite algebras, we write $\underline{E}_{n+1}^\gamma$,
$\alpha = n+1$.  In this case there are $n+1$ atoms and $n$
diversity atoms.  When $\alpha$ is infinite we leave off the ``+1".
Relative multiplication is given on the atoms by
\[
a;a = \sum\{ c\in \text{At} \underline{E}_\alpha^\gamma :
|\{a,c\}|\in \gamma \}\cup\{\1\}
\]
and for $a\neq b$,
\[
a;b = \sum\{ c\in \text{At} \underline{E}_\alpha^\gamma :
|\{a,b,c\}|\in \gamma \}
\]

If $\gamma=\{1,3\}$, then  $\underline{E}_{n+1}^\gamma$ is identical
to the algebra defined above, the \emph{Lyndon algebra of a
projective line of order $n-1$}.

\begin{thm}[Lyndon, '61]   $\E \in \textsf{RRA}$ iff there exists a projective plane of order $n-1$.  (See \cite{Lyn61}.) \end{thm}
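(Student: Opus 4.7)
The plan is to establish a direct correspondence between representations of $\E$ and affine planes of order $n-1$, then invoke the classical equivalence between affine and projective planes of the same order (affine planes arise by deleting a line from a projective plane, and projective planes arise by adjoining a line at infinity to an affine plane).

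For the $(\Leftarrow)$ direction, suppose a projective plane of order $n-1$ exists. Deleting one line yields an affine plane $\mathcal{A}$ with $|U|=(n-1)^2$ points and $n$ parallel classes $\mathcal{P}_1,\ldots,\mathcal{P}_n$, each containing $n-1$ lines of $n-1$ points. Set $a_i := \{\langle x,y\rangle \in U\times U : x\neq y \text{ and some line of } \mathcal{P}_i \text{ contains } x,y\}$. Then $\Id_U, a_1,\ldots,a_n$ partition $U\times U$, each $a_i$ is symmetric, and the affine-plane axioms translate directly into the multiplication table of $\E$: ``two collinear pairs from the same parallel class determine the same line'' gives $a_i\,|\,a_i = a_i \cup \Id_U$; ``two lines from different parallel classes meet in exactly one point'' gives $a_i\,|\,a_j = \bigcup_{k\neq i,j}a_k$ for $i\neq j$. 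Closing under boolean operations gives an embedding $\E \hookrightarrow \RR(U)$, hence $\E\in\textsf{RRA}$.

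For the $(\Rightarrow)$ direction, let $h:\E\hookrightarrow\RR(U)$ be a representation. Since $\E$ is simple (it is integral), we may assume $h(1)=U\times U$. Write $a_i := h(\mathrm{atom}_i)$ for each diversity atom. The equation $a_i;a_i = a_i+\1$ in $\E$ forces $\sim_i := a_i \cup \Id_U$ to be an equivalence relation on $U$; the equation $a_i;a_j = \sum_{k\neq i,j}a_k$ (for $i\neq j$) forces the ``rainbow triangle'' property; and the identity $a_i;1=1$ (derivable in $\E$ for $n\geq 3$) together with $a_i \leq a_i;a_i$ forces every $\sim_i$-class to have at least three elements. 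Declaring $U$ to be the points and each $\sim_i$-class a line (with the index $i$ labelling parallelism), I would verify that this is an affine plane of order $n-1$, and then adjoin a line at infinity to obtain the desired projective plane.

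The main obstacle---and the crux of the $(\Rightarrow)$ direction---is showing that for $i\neq j$, every $\sim_i$-class $A$ meets every $\sim_j$-class $B$ in exactly one point. The bound $|A\cap B|\leq 1$ is immediate: two distinct points of $A\cap B$ would share both a color-$i$ edge (via $A$) and a color-$j$ edge (via $B$), contradicting disjointness of the atoms $a_i, a_j$. The bound $|A\cap B|\geq 1$ requires the composition axiom: if $A\cap B=\emptyset$, choose $x\in A$, $y\in B$; the color $k$ of $\langle x,y\rangle$ is neither $i$ (else $y\in A$) nor $j$ (else $x\in B$), hence $k\neq i,j$, so $a_i;a_j \supseteq a_k$ produces $z\in U$ with $\langle x,z\rangle\in a_i$ and $\langle z,y\rangle\in a_j$, placing $z\in A\cap B$---a contradiction. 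Once this intersection property is in hand, a straightforward double count (each $\sim_j$-class meets a fixed $\sim_i$-class $A$ in exactly one point, so $|\{\sim_j\text{-classes}\}|=|A|$, and symmetrically) forces all $\sim_i$-classes across all colors to have a common size $q$ with $|U|=q^2$ and $n=q+1$, completing the affine-plane structure.
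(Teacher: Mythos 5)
The paper does not actually prove this theorem: it is stated with a bare citation to Lyndon's 1961 paper, so there is no internal argument to compare against. Your proposal supplies the standard proof (essentially Lyndon's own): a square representation of $\E$ is precisely an $n$-colouring of the complete graph on a base set $U$ in which each colour class together with the identity is an equivalence relation (from $a_i;a_i=\1+a_i$) and classes of distinct colours are ``orthogonal,'' meeting in exactly one point (the $\le 1$ bound from disjointness of the atoms, the $\ge 1$ bound from $a_k\le a_i;a_j$); your double count then upgrades this net to an affine plane of order $n-1$, and the affine--projective completion handles both directions. All the essential steps are present and correct, including the reduction to a square representation via integrality/simplicity. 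Two small points to make explicit in a full write-up: in the $(\Leftarrow)$ direction the reverse inclusion $a_i\cup\Id_U\subseteq a_i|a_i$ requires each affine line to carry at least three points, so the argument needs $n-1\ge 3$ (harmless, since the paper's geometry axioms force $|\ell|\ge 4$ and the theorem is only invoked for $n\ge 5$); and in the $(\Rightarrow)$ direction the double count yields a single common class size $q$ with $|U|=q^2$ and $n=q+1$ only when $n\ge 3$, since for $n=2$ it produces two a priori different parameters. Neither caveat affects the use the paper makes of the theorem.
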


\begin{thm}[Bruck-Ryser '49] There exist infinitely many integers
such that there is no projective plane of that order.  (See
\cite{BR}.)
\end{thm}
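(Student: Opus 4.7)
The plan is to invoke the classical Bruck--Ryser theorem --- if a projective plane of order $n$ exists and $n \equiv 1$ or $2 \pmod 4$, then $n$ must be a sum of two integer squares --- and then exhibit an infinite family of integers that this condition excludes.

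For Bruck--Ryser itself I would proceed via the point-line incidence matrix. Let $\Pi$ be a projective plane of order $n$ and let $A$ be its $v \times v$ incidence matrix with $v = n^2+n+1$. The incidence axioms yield $AA^T = nI + J$ and $A\mathbf{1} = (n+1)\mathbf{1}$, where $J$ is the all-ones matrix. Setting $\mathbf{y} = A^T \mathbf{x}$ produces the quadratic identity
\[
y_1^2 + \cdots + y_v^2 \;=\; n(x_1^2 + \cdots + x_v^2) + (x_1 + \cdots + x_v)^2.
\]
Since $n \equiv 1$ or $2 \pmod 4$, Lagrange's four-square theorem lets me write $n = a^2+b^2+c^2+d^2$, and Euler's four-square identity supplies, for each block of four $x$'s, a rational linear change of variables whose sum of squares equals $n$ times that block's sum of squares. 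A careful accounting --- using that $v \equiv 3 \pmod 4$ in both admissible residue classes, and clearing denominators on the residual variables --- collapses the displayed identity to a rational identity of the form $w^2 = nu^2 + L(\mathbf{x})^2$ for some rational $w$, $u$ and rational linear form $L$. Specializing suitably forces $n$ to be a sum of two rational squares, and by the classical descent this yields $n$ as a sum of two integer squares.

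For the second step I would display an explicit infinite sequence of $n$ that satisfies the hypothesis of Bruck--Ryser but fails its conclusion. Take $n = 8k+6$ for $k \geq 0$. Then $n \equiv 2 \pmod 4$, so the theorem applies. Writing $n = 2m$ with $m = 4k+3$, the odd factor $m$ is $\equiv 3 \pmod 4$, hence not every prime $p \equiv 3 \pmod 4$ dividing $m$ can occur to an even power (otherwise $m \equiv 1 \pmod 4$). By Fermat's two-square theorem, $n$ is then not expressible as $a^2 + b^2$. Bruck--Ryser therefore rules out any projective plane of order $n = 8k+6$, giving infinitely many forbidden orders.

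The main obstacle is the matrix-theoretic core of Bruck--Ryser, specifically organizing the Lagrange/Euler substitutions when $v$ is not a multiple of $4$ and isolating the resulting two-square relation without losing rationality. The number-theoretic and combinatorial ingredients on either side --- Fermat's characterization of sums of two squares, and the congruence bookkeeping for the family $8k+6$ --- are elementary and fit together directly.
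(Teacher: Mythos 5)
The paper offers no proof of this statement at all: it is quoted as a known theorem with a citation to Bruck and Ryser's 1949 paper, and is used downstream only as a black box to guarantee an infinite set of excluded orders. Your proposal therefore takes a genuinely different route, in that it actually supplies (in outline) the classical argument. Both halves of your outline are sound. The identity $\sum y_i^2 = n\sum x_i^2 + (\sum x_i)^2$ follows correctly from $AA^T = nI + J$ with $y = A^T x$, and the Lagrange/Euler four-square reduction, together with the observation that $v = n^2+n+1 \equiv 3 \pmod 4$ whenever $n \equiv 1$ or $2 \pmod 4$, is the standard path to the conclusion that such an $n$ must be a sum of two integer squares; you rightly flag the bookkeeping of the substitutions as the only delicate point, and that bookkeeping is exactly the content of the published proof rather than a gap in your plan. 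The second half is complete and correct as written: for $n = 8k+6$ we have $n \equiv 2 \pmod 4$, and writing $n = 2m$ with $m = 4k+3 \equiv 3 \pmod 4$, some prime $p \equiv 3 \pmod 4$ must divide $m$ (hence $n$, since $m$ is odd) to an odd power, so Fermat's two-square theorem blocks $n = a^2 + b^2$ and Bruck--Ryser excludes a plane of order $n$. What your approach buys is self-containment and an explicit infinite family of forbidden orders; what the paper's approach buys is brevity, which is defensible here since the theorem is peripheral to the thesis's main line (it is needed only to produce arbitrarily large non-representable Lyndon algebras $\E$).
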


By these two theorems we establish the existence of a countable
set of arbitrarily large finite non-representable relation
algebras. This collection will be central to the proofs of the
theorems of both Monk and J\'{o}nsson.

\section{Non-finite axiomatizability}

That \textsf{RRA} is not finitely axiomatizable is a theorem due to
Monk.  The key to the proof is the standard model-theoretic
ultraproduct construction.

\begin{lemma} Let $\{A_i\}_{i\in \omega}$ be a collection of
finite sets such that $|A_n|\geq 2^n$.  Let $\F$ be a
non-principal ultrafilter on $\omega$.  Then
\[
\left|\prod_{i\in\omega} A_i \bigg/ \F \right|  = 2^{\aleph_0}
\]

\end{lemma}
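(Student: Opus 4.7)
The plan is to prove the equality by establishing both inequalities: $\le 2^{\aleph_0}$ by a cheap cardinality count, and $\ge 2^{\aleph_0}$ by injecting $2^\omega$ into the ultraproduct.

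For the upper bound, since each $A_i$ is countable (being finite), $\prod_{i\in\omega} A_i$ injects into $\omega^\omega$, which has cardinality $2^{\aleph_0}$. The ultraproduct, being a quotient of $\prod_{i\in\omega} A_i$, therefore has cardinality at most $2^{\aleph_0}$.

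For the lower bound, I would build an injection $2^\omega \hookrightarrow \prod_{i\in\omega} A_i / \F$. The construction uses the growth assumption $|A_n| \ge 2^n$: for each $n$, fix an injection of the set $2^n$ of binary strings of length $n$ into $A_n$, giving distinct elements $\{a^n_\tau : \tau \in 2^n\} \subseteq A_n$. For each $s \in 2^\omega$, define $f_s \in \prod_{i\in\omega} A_i$ by
\[
f_s(n) = a^n_{s\restriction n},
\]
where $s\restriction n$ denotes the length-$n$ initial segment of $s$. I claim $s \ne t$ in $2^\omega$ implies $f_s/\F \ne f_t/\F$. Indeed, if $s(k) \ne t(k)$ for some $k$, then for every $n > k$ the strings $s\restriction n$ and $t\restriction n$ differ (at position $k$), so $f_s(n) \ne f_t(n)$ by the injectivity of $\tau \mapsto a^n_\tau$. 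Hence
\[
\{n \in \omega : f_s(n) = f_t(n)\} \subseteq \{0,1,\dots,k\},
\]
a finite set. Since $\F$ is non-principal, $\F$ contains every cofinite subset of $\omega$ and no finite subset; in particular the above agreement set is not in $\F$, so $f_s \not\sim_\F f_t$. Thus $s \mapsto f_s/\F$ is an injection from $2^\omega$ into the ultraproduct, giving the lower bound $2^{\aleph_0}$.

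There is no real obstacle here: the only substantive ingredient is that a non-principal ultrafilter on $\omega$ excludes all finite sets, which is standard and used at the very last step. The growth condition $|A_n| \ge 2^n$ is what makes the coding by binary sequences of increasing length possible; any unbounded growth would suffice after reindexing, but $2^n$ gives the cleanest bookkeeping.
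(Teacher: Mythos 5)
Your proof is correct and follows essentially the same route as the paper: both establish the lower bound by coding each $s\in 2^\omega$ as a function whose $n$-th value is an element of $A_n$ indexed by the first $n$ bits of $s$, and then observe that two distinct sequences yield functions agreeing only on a finite set, which a non-principal ultrafilter excludes. Your treatment of the upper bound (injecting $\prod A_i$ into $\omega^\omega$) is in fact slightly more careful than the paper's, which simply asserts $|\prod A_i|=2^{\aleph_0}$.
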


\begin{proof}
We embed $2^\omega$ in the ultraproduct.  This is sufficient since
$|\prod A_i|=2^{\aleph_0}$, so we only need to establish that the
ultraproduct is at least that big.

Let $\alpha \in 2^\omega$.  We will construct $f_\alpha \in \prod
A_i$ such that for $\alpha\neq \beta$, we will have $f_\alpha
\not\sim_\F f_\beta$.  Thus $f_\alpha$ and $f_\beta$ will be in
distinct equivalence classes in the ultraproduct.

So we want $f_\alpha : \omega \longrightarrow \cup A_i$, so that
$f_\alpha (n) \in A_n$.  We denote the elements of $A_i$ as
follows: $\{a_i^0, a_i^1, a_i^2, \ldots, a_i^{2^i - 1} \}
\subseteq A_i$, since $A_i$ has at least $2^i$ elements.  Then for
any $\alpha$, let $f_\alpha(0)=a_0^0$,  the one element guaranteed
to be in $A_0$.  Let
\[
f_\alpha(1) =
\begin{cases}
a_1^1, & \alpha(1)=1 \\
a_1^0, & \alpha(1)=0
\end{cases}
\]
So $f_\alpha(1)\in A_1$. Let
\[
f_\alpha(2) =
\begin{cases}
a_2^3, & \alpha(1)=1 \meet \alpha(2)= 1 \\
a_2^2, & \alpha(1)=0 \meet \alpha(2)= 1 \\
a_2^1, & \alpha(1)=1 \meet \alpha(2)= 0 \\
a_2^0, & \alpha(1)=0 \meet \alpha(2)= 0
\end{cases}
\]
So $f_\alpha(2)\in A_2$.  In general, let $f_\alpha(n)=a_n^k$,
where $0\leq k \leq 2^n - 1$, and where $k$ is the integer given
by the binary digits $\alpha(1) \underline{\qquad}\alpha(n)$ read
from left to right.  For example, if $\alpha = 10110$ (after
dropping the first digit $\alpha(0)$), then
\begin{align*}
f_\alpha (1) &= a_1^1 \in A_1 \\
f_\alpha (2) &= a_2^1 \in A_2 \\
f_\alpha (3) &= a_3^5 \in A_3 \\
f_\alpha (4) &= a_4^{13} \in A_4 \\
f_\alpha (5) &= a_5^{13} \in A_5 \\
             &\vdots
\end{align*}
The following binary tree depicts the ``path" of $f_{10110\ldots}$
through the algebras $A_0, A_1, \ldots$
\pagebreak[2]
\begin{center}
{\footnotesize (If $\alpha(i)=1$, ``go up."  If $\alpha(i) = 0$,
``go down.")}
\end{center}

\begin{graph}(13,15)(-4.5,-6.5)
\graphnodesize{.05} \roundnode{-4,0}(-4,0) \roundnode{-2,4}(-2,4)
\roundnode{-2,0}(-2,0) \roundnode{-2,-4}(-2,-4)
\roundnode{0,6}(0,6) \roundnode{0,4}(0,4) \roundnode{0,2}(0,2)
\roundnode{0,-2}(0,-2) \roundnode{0,-4}(0,-4)
\roundnode{0,-6}(0,-6) %%%%%%%%%%%%%%%%%%%%%%%%%%%%%%%%%%%%%%%%%%
\roundnode{2,7}(2,7) \roundnode{2,6}(2,6) \roundnode{2,5}(2,5)
\roundnode{2,3}(2,3) \roundnode{2,2}(2,2) \roundnode{2,1}(2,1)
\roundnode{2,-7}(2,-7) \roundnode{2,-6}(2,-6)
\roundnode{2,-5}(2,-5) \roundnode{2,-3}(2,-3)
\roundnode{2,-2}(2,-2) \roundnode{2,-1}(2,-1) %%%%%%%%%%%%%%%%%%%%%%%%%
\roundnode{4,7}(4,7) \roundnode{4,5}(4,5) \roundnode{4,3}(4,3)
\roundnode{4,1}(4,1) \roundnode{4,-7}(4,-7) \roundnode{4,-5}(4,-5)
\roundnode{4,-3}(4,-3) \roundnode{4,-1}(4,-1) \roundnode{4,4}(4,4)
\roundnode{4,2}(4,2)%%%%%%%
\roundnode{6,4.5}(6,4.5) \roundnode{6,4}(6,4) \roundnode{6,2}(6,2)
\roundnode{6,3.5}(6,3.5) \roundnode{8,4.5}(8,4.5)
\roundnode{8,3.5}(8,3.5) %%%%%%%%%%%%%%%%%%%%%%%%%%%%%%%%%%%%%%%%%%%%%%%%%%%%%%%%%%%%%%
%%%%%%%%%% Dotted Edges %%%%%%%%%%%%%%%%%%%%%%%%%%%%%%%%%%%%
 \edge{-2,0}{-2,-4}[\graphlinedash{1 1}] \edge{-2,-4}{0,-4}[\graphlinedash{1 1}]
\edge{0,6}{0,4}[\graphlinedash{1 1}]
\edge{0,-2}{0,-6}[\graphlinedash{1 1}]
\edge{0,6}{2,6}[\graphlinedash{1 1}]
\edge{0,-2}{2,-2}[\graphlinedash{1 1}]
\edge{0,-6}{2,-6}[\graphlinedash{1 1}]
\edge{2,7}{2,5}[\graphlinedash{1 1}]
\edge{2,2}{2,1}[\graphlinedash{1 1}]
\edge{2,-1}{2,-3}[\graphlinedash{1 1}]
\edge{2,-5}{2,-7}[\graphlinedash{1 1}]
\edge{2,7}{4,7}[\graphlinedash{1 1}]
\edge{2,5}{4,5}[\graphlinedash{1 1}]
\edge{2,1}{4,1}[\graphlinedash{1 1}]
\edge{2,-1}{4,-1}[\graphlinedash{1 1}]
\edge{2,-3}{4,-3}[\graphlinedash{1 1}]
\edge{2,-5}{4,-5}[\graphlinedash{1 1}]
\edge{2,-7}{4,-7}[\graphlinedash{1 1}]
\edge{4,3}{4,2}[\graphlinedash{1 1}]
\edge{4,2}{6,2}[\graphlinedash{1 1}]
\edge{6,4}{6,4.5}[\graphlinedash{1 1}]
\edge{6,4.5}{8,4.5}[\graphlinedash{1 1}] %%%%%%%%%%%%%%%%%%%%%%%%%%%
%%%%%%%%%%%%%%%%%  Thick Edges %%%%%%%%%%%%%%%%%%%%%%%%%%%%%%%%%%%%%%%%%%%%
\edge{-4,0}{-2,0}[\graphlinewidth{.06}]
\edge{-2,0}{-2,4}[\graphlinewidth{.06}]
\edge{-2,4}{0,4}[\graphlinewidth{.06}]
\edge{0,4}{0,2}[\graphlinewidth{.06}]
\edge{0,2}{2,2}[\graphlinewidth{.06}]
\edge{2,2}{2,3}[\graphlinewidth{.06}]
\edge{2,3}{4,3}[\graphlinewidth{.06}]
\edge{4,3}{4,4}[\graphlinewidth{.06}]
\edge{4,4}{6,4}[\graphlinewidth{.06}]
\edge{6,4}{6,3.5}[\graphlinewidth{.06}]
\edge{6,3.5}{8,3.5}[\graphlinewidth{.06}]%%%%%%%%%%%%%%%%%%%%%%%%%%%%%%%
%%%%%%%%%%%%%%%%%%%%%%%%%%%%%%% Labels %%%%%%%%%%%%%%%%%%%%%%%%%%%%%%%%%%%
\bowtext{-4,0}{-2,0}{-0.15}{$a_0^0$}
\bowtext{-2,4}{0,4}{-0.15}{$a_1^1$}
\bowtext{-2,-4}{0,-4}{-0.15}{$a_1^0$}
\bowtext{0,6}{2,6}{-0.15}{$a_2^3$}
\bowtext{0,2}{2,2}{-0.15}{$a_2^1$}
\bowtext{0,-2}{2,-2}{-0.15}{$a_2^2$}
\bowtext{0,-6}{2,-6}{-0.15}{$a_2^0$}
\bowtext{2,7}{4,7}{-0.15}{$a_3^7$}
\bowtext{2,5}{4,5}{-0.15}{$a_3^3$}
\bowtext{2,3}{4,3}{-0.15}{$a_3^5$}
\bowtext{2,1}{4,1}{-0.15}{$a_3^1$}
\bowtext{2,-1}{4,-1}{-0.15}{$a_3^6$}
\bowtext{2,-3}{4,-3}{-0.15}{$a_3^2$}
\bowtext{2,-5}{4,-5}{-0.15}{$a_3^4$}
\bowtext{2,-7}{4,-7}{-0.15}{$a_3^0$}
\bowtext{4,4}{6,4}{-0.15}{$a_4^{13}$}
\bowtext{4,2}{6,2}{-0.15}{$a_4^5$}
\bowtext{6,4.5}{8,4.5}{0.15}{$a_5^{45}$}
\bowtext{6,3.5}{8,3.5}{-0.15}{$a_5^{13}$}

\end{graph}

\vspace{1in}

Now if $\alpha\neq\beta$, then there is some $n$ for which
$\alpha(n)\neq\beta(n)$.  Notice that $f_\alpha(k)\neq f_\beta(k) \
\forall k \geq n$ by construction.  Since $\F$ is nonprincipal,
$X\in \F \Rightarrow |X|=\omega$.  Since $f_\alpha$ and $f_\beta$
agree on at most finitely many integers, the set on which they agree
is not in the ultrafilter, hence $f_\alpha \not\sim_\F f_\beta$.
Thus the map $ \alpha \longmapsto f_\alpha \big/ \F$ is injective.
\qedhere
\end{proof}

\begin{cor} Let $\{A_i\}_{i\in\omega}$ be a set of finite sets
such that the sizes of the $A_i \,$'s is unbounded.  Then a
non-principal ultraproduct of the $A_i \,$s has cardinality
$2^{\aleph_0}$.
\end{cor}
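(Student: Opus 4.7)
The plan is to prove the universal version of the claim: for every non-principal ultrafilter $\F$ on $\omega$, the ultraproduct $\prod_{i\in\omega} A_i/\F$ has cardinality $2^{\aleph_0}$. The strategy is to adapt the binary-tree embedding of Lemma 4.2.1 by replacing the fixed ``shelf size'' $2^n$ at coordinate $n$ with a coordinate-dependent depth $t(i):=\lfloor\log_2|A_i|\rfloor$, so that each $A_i$ still absorbs its initial binary code regardless of how irregularly the sizes grow.

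For the argument to cover an \emph{arbitrary} non-principal $\F$, the hypothesis must be read as $|A_i|\to\infty$; otherwise one could, for instance, take $\F$ concentrating on indices where $|A_i|=1$ (if infinitely many such $i$ exist) and collapse the ultraproduct to a single class. Under this reading each set $B_k:=\{i\in\omega:|A_i|\geq 2^k\}$ is cofinite in $\omega$, and since every non-principal ultrafilter contains every cofinite subset of $\omega$, $B_k\in\F$ for every $k$.

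Next, for each $i$ I would fix an injection $\phi_i:\{0,\ldots,2^{t(i)}-1\}\hookrightarrow A_i$, which exists since $|A_i|\geq 2^{t(i)}$. For each $\alpha\in 2^\omega$, define $f_\alpha\in\prod_i A_i$ by $f_\alpha(i):=\phi_i(n_\alpha(i))$, where $n_\alpha(i)$ is the integer with binary digits $\alpha(0)\alpha(1)\cdots\alpha(t(i)-1)$. If $\alpha\neq\beta$ first differ at coordinate $n$, then for every $i$ with $t(i)>n$ the binary expansions of $n_\alpha(i)$ and $n_\beta(i)$ disagree at position $n$, hence $n_\alpha(i)\neq n_\beta(i)$, and so $f_\alpha(i)\neq f_\beta(i)$ by injectivity of $\phi_i$. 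The agreement set is therefore disjoint from $B_{n+1}\in\F$, so it fails to lie in $\F$ and $f_\alpha\not\sim_\F f_\beta$. This yields an injection $\alpha\mapsto f_\alpha/\F$ of $2^\omega$ into the ultraproduct, establishing $\left|\prod_{i\in\omega} A_i/\F\right|\geq 2^{\aleph_0}$; the reverse inequality is immediate from $\left|\prod_i A_i\right|\leq \aleph_0^{\aleph_0}=2^{\aleph_0}$.

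The main obstacle is settling the right reading of ``unbounded'': the construction produces distinct ultraclasses only when $B_{n+1}$ belongs to $\F$, which is guaranteed for \emph{all} non-principal $\F$ precisely when $B_k$ is cofinite, i.e., $|A_i|\to\infty$. Once that point is cleared, the rest of the proof is the binary-tree encoding of Lemma 4.2.1 with a variable-depth grid $t(i)$ in place of the uniform grid $2^n$.
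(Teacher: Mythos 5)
Your construction itself is correct, and the variable-depth coding $t(i)=\lfloor\log_2|A_i|\rfloor$ is exactly the right generalization of the binary-tree argument of Lemma 4.2.1; your counterexample (an ultrafilter concentrating on infinitely many singleton $A_i$'s) also correctly shows that the universal reading of the corollary is false if ``unbounded'' is taken literally. For comparison: the paper gives no proof of the corollary at all, and the derivation it implicitly intends is existential --- extract a subsequence $i_0<i_1<\cdots$ with $|A_{i_n}|\geq 2^n$, take a non-principal ultrafilter containing $\{i_n : n\in\omega\}$, and invoke Lemma 4.2.1 after relabeling. What your reading buys is real: the version you prove (arbitrary non-principal $\F$, sizes tending to infinity) is the one the paper actually needs in the subsequent lemma embedding $\prod_{n\in I}\E\big/\F$ into $\EE$, where the ultrafilter on $I$ is arbitrary and the relevant sets are the atom sets $\text{At}\,\E$, of size $n+1$ for $n\in I$; these tend to infinity along $I$ but do not satisfy $|A_{i_n}|\geq 2^n$ after relabeling, so neither Lemma 4.2.1 nor the existential corollary covers that application, while your statement does.

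The one genuine defect is that, as written, you have not proved the corollary under its stated hypothesis: you strengthened ``unbounded'' to $|A_i|\to\infty$ instead of weakening the conclusion to the existential reading, so the statement as printed (which is true only in its existential reading) is left unproven. The repair costs one sentence and uses only machinery you already have. If the sizes are merely unbounded, each $B_k=\{i\in\omega : |A_i|\geq 2^k\}$ is still infinite (were $B_k$ finite, all sizes would be bounded by the maximum of $2^k$ and $\max_{i\in B_k}|A_i|$), and the $B_k$ form a decreasing chain; hence the family $\{B_k : k\in\omega\}$ together with the cofinite sets has the finite intersection property and extends to an ultrafilter $\F$, necessarily non-principal, containing every $B_k$. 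Your disagreement argument used only that $B_{n+1}\in\F$, not that $B_{n+1}$ is cofinite, so it runs verbatim for this $\F$ and yields the existential conclusion. With that paragraph added, your proof establishes both true readings at once: every non-principal ultraproduct has power $2^{\aleph_0}$ when $|A_i|\to\infty$, and some non-principal ultraproduct does when the sizes are only unbounded.
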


\begin{lemma} Consider $\{\E\}_{n\in I}$, $I\subseteq\omega$, $I$
infinite.  Let $\F$ be a non-principal ultrafilter on $I$. Then
$\ds \prod_{n\in I} \E \bigg/ \F$ embeds in $\EE$, where the set
of atoms of $\EE$ is $\ds \prod_{n\in I} \text{At} \E \bigg/ \F$.
\end{lemma}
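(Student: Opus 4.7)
The plan is to define an embedding $\varphi:\prod_{n\in I}\E\big/\F\longrightarrow\EE$ by identifying atoms on both sides. By the previous corollary, $\prod_{n\in I}\text{At}(\E)\big/\F$ has cardinality $2^{\aleph_0}$; since this is the atom set of $\EE$ by hypothesis, every equivalence class $[a]=\langle a_n\rangle/\F$ with $a_n\in\text{At}(\E)$ can be read either as an atom of the ultraproduct or as an atom $[a]^*$ of $\EE$, and the bijection $[a]\longleftrightarrow[a]^*$ will drive the construction.

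I would begin by checking that the ultraproduct is itself atomic, and that its atoms are exactly these classes $[a]$ with $a_n\in\text{At}(\E)$ for $\F$-almost every $n$. Atomicity is a first-order sentence true in every $\E$ (each being finite), so by Los' lemma it holds in the ultraproduct; the characterization of its atoms is another direct application of Los'. I then define
\[
\varphi(x)=\sum\{[a]^*:[a]\text{ atom of }\textstyle\prod_{n\in I}\E/\F,\ [a]\leq x\},
\]
using the completeness of $\EE$ to make sense of the join. Injectivity is immediate from atomicity: a non-zero symmetric difference in the ultraproduct contains an atom, whose image separates. Preservation of $+$ and $-$ is the standard atomic-boolean argument, preservation of $\conv{}$ is trivial since both algebras are symmetric, and $\varphi(\1)=\1$ follows from integrality together with the identification of atoms.

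The main obstacle is preservation of $;$. I would first handle pairs of atoms. For $\F$-almost every $n$ the product $a_n;b_n$ in $\E$ is given by the Lyndon rule, which depends only on which of the (in)equalities $a_n=b_n$, $a_n=\1$, $b_n=\1$, $a_n=c_n$, $b_n=c_n$, $c_n=\1$ hold. A case analysis (both identity, exactly one identity, equal diversity atoms, distinct diversity atoms) shows that the set of atoms $[c]$ below $[a];[b]$ in the ultraproduct corresponds, via Los' applied to each of these first-order equalities, to exactly the set of atoms $[c]^*$ below $[a]^*;[b]^*$ computed in $\EE$ by the same rule. Hence $\varphi([a];[b])=[a]^*;[b]^*$.

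For arbitrary $x,y$ it suffices to show that $\varphi(x;y)$ and $\varphi(x);\varphi(y)$ have the same atoms of $\EE$ beneath them. An atom $[c]^*$ lies below $\varphi(x;y)$ iff $[c]\leq [x];[y]$ in the ultraproduct. Because each $\E$ is finite we have the coordinate-wise decomposition $x_n;y_n=\sum_{a\leq x_n,\,b\leq y_n}a;b$; a coordinate-wise choice of witnesses on the $\F$-large set where $c_n\leq x_n;y_n$ produces atoms $[a]\leq [x]$ and $[b]\leq [y]$ with $[c]\leq [a];[b]$ (the reverse direction is just monotonicity). By the atom-level identity and the complete additivity of $;$ in $\EE$, this is equivalent to the existence of atoms $[a]^*\leq\varphi(x)$, $[b]^*\leq\varphi(y)$ with $[c]^*\leq [a]^*;[b]^*$, i.e.\ to $[c]^*\leq\varphi(x);\varphi(y)$, completing the embedding.
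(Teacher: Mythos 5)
Your proposal is correct and follows essentially the same route as the paper: atomicity of the ultraproduct via Los' lemma, identification of the atoms of the ultraproduct with those of $\EE$, and extension of the atom correspondence to an embedding using the completeness of $\EE$. The only difference is one of detail: where the paper compresses the preservation of $;$ into the remark that relative multiplication is determined by the boolean operations on atoms, you spell out the atom-level case analysis via Los' and the complete-additivity step for arbitrary elements, which is a welcome expansion of the same argument rather than a different one.
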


\begin{proof} Each $\E$ is atomic.  The property of being atomic
is expressible by the sentence $\varphi = (\forall
x)(\neg(x=0)\Rightarrow (\exists y)((\neg(y=0)\meet
x+y=x)\meet(\forall z)(z+y=y \Rightarrow z=0))$. Since each
$\E\models\varphi$, $\prod_{n\in I} \E \big/ \F \models\varphi$,
since satisfaction of sentences is preserved under the ultraproduct
construction (by Los' lemma--see \cite{CK}).  Hence the ultraproduct
is atomic.

Let $f\in \prod \E$; let $[f]$ be the equivalence class of $f$ in
the ultraproduct.  If $[f]$ is an atom of the ultraproduct, then
$\{n\in I : f(n) \text{ is an atom of } \E \} \in \F$ .  Thus
$\exists g \in \prod \text{At}\, \E \subseteq \prod \E$, where
$[f]=[g]$.  So there is a 1-1 correspondence between atoms of the
ultraproduct and the atoms of $\EE$.  If $[I]$ is the identity in
the ultraproduct, then $I(n)= \1_{A_n}$ ``almost everywhere."  So
$[I]$ corresponds to some $g\in \prod \text{At}\, \E$, namely the
``constant" function $I \longmapsto \1_{A_n}$, and $[I] = [I
\longmapsto \1_{A_n}]$.  So the correspondence preserves the
identity.

Since conversion is identification, it is preserved.

To see that relative multiplication is preserved, notice that it
is defined in terms of the boolean operations.  Now $\EE$ is
\emph{complete}, so the correspondence on atoms extends to an
injective homomorphism from the ultraproduct to $\EE$ that
preserves the boolean operations, hence ;.

Therefore $\EE$ contains an isomorphic copy of the ultraproduct
$\ds \prod_{n\in I} \E \bigg/ \F$. \qedhere

\end{proof}

\begin{lemma} $\EE$ is representable over $\mathbb{R}^2$.
\end{lemma}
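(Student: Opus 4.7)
The plan is to build an explicit isomorphism of $\EE$ onto a subalgebra of $\RR(\mathbb{R}^2)$ by sending each diversity atom to a ``parallelism'' relation along a fixed direction. The set $\Theta$ of one-dimensional $\mathbb{R}$-subspaces of $\mathbb{R}^2$ has cardinality $2^{\aleph_0}$, so it can be put in bijection with the diversity atoms of $\EE$. The geometric facts that will drive the representation are that any two distinct $H, K \in \Theta$ satisfy $H \cap K = \{0\}$ and $H \oplus K = \mathbb{R}^2$.

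Concretely, fix such a bijection and write $a_H$ for the diversity atom paired with $H \in \Theta$. For each $H$ define
\[
R_H = \{ \langle x, y \rangle \in \mathbb{R}^2 \times \mathbb{R}^2 : x \neq y \text{ and } x - y \in H \}.
\]
Let $f \colon \EE \longrightarrow \RR(\mathbb{R}^2)$ send $\1$ to $\Id_{\mathbb{R}^2}$, $a_H$ to $R_H$, and extend to arbitrary elements of $\EE$ by sending joins of atoms to unions of their images. This is well defined because $\EE$ is complete and atomic, with boolean reduct isomorphic to the power-set algebra of the atom set.

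I would then verify, in order: (i) every nonzero vector of $\mathbb{R}^2$ lies on a unique line through the origin, so $\{\Id_{\mathbb{R}^2}\} \cup \{R_H\}_{H \in \Theta}$ partitions $\mathbb{R}^2 \times \mathbb{R}^2$; (ii) each $R_H$ is symmetric, since $-H = H$. Together these make $f$ preserve $+$, $-$, $\1$, and $\conv{}$. Next (iii), $R_H | R_H = R_H \cup \Id_{\mathbb{R}^2}$, matching $a_H; a_H = a_H + \1$: the inclusion $\subseteq$ follows because $H$ is an additive subgroup, and $\supseteq$ by inserting $z = x + w$ for a nonzero $w \in H$ distinct from $x - y$. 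Finally (iv), for distinct $H, K \in \Theta$, the identity $R_H | R_K = \bigcup_{L \neq H, K} R_L$, matching $a_H; a_K = \overline{\1 + a_H + a_K}$.

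The main obstacle is the composition identity (iv). For $\subseteq$: if $\langle x, y \rangle \in R_H | R_K$ via a witness $z$, then $x - y = (x - z) + (z - y)$ is a sum of a nonzero element of $H$ and a nonzero element of $K$, and $H \cap K = \{0\}$ forces $x \neq y$ and $x - y \notin H \cup K$, so $x - y$ lies on a unique line $L \in \Theta \setminus \{H, K\}$ and $\langle x, y\rangle \in R_L$. For $\supseteq$: given $\langle x, y \rangle \in R_L$ with $L \neq H, K$, the decomposition $\mathbb{R}^2 = H \oplus K$ uniquely splits $x - y = u + v$ with $u \in H$, $v \in K$; both summands must be nonzero, since otherwise $x - y$ would lie in $K$ or in $H$, so $z := y + v$ witnesses $\langle x, y \rangle \in R_H | R_K$. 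Once (i)--(iv) are established, $f$ is injective on atoms and preserves all the operations of $\EE$, hence is an isomorphism of $\EE$ onto a proper subalgebra of $\RR(\mathbb{R}^2)$, yielding $\EE \in \textsf{RRA}$.
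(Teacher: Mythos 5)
Your representation is the same as the paper's: a line $H$ through the origin is precisely the set of difference vectors with a fixed slope in $\mathbb{R}\cup\{\infty\}$, so your $R_H$ is exactly the paper's relation ``$x$ and $y$ are connected by a line of slope $a$,'' and your verification of the composition identities (in particular the direct-sum argument for $\supseteq$ in (iv)) just fills in details the paper dismisses as clear. The only nit is in step (iii), where the witness $z=x+w$ requires $w\neq y-x$ rather than $w\neq x-y$; this is harmless since $H\setminus\{0\}$ is infinite.
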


\begin{proof}
The algebra $\EE$ has $2^{\aleph_0}$ atoms.  Associate each
diversity atom $a$ with a number in
$\mathbb{R}\cup\{\infty\}$.\footnote{The author may be forgiven,
he hopes, for referring to $\infty$ as a ``number," a clear abuse
of the word.  At least he has not claimed that rational functions
have discontinuities, as in done in so many calculus texts.}  Send
$\1 \in \EE$ to $\{\big\langle\langle x,y \rangle, \langle x,y
\rangle \big\rangle : \langle x,y \rangle \in \mathbb{R}^2 \}$.
For an atom $a\leq \0$, send $a$ to $\ds\big\{\big\langle\langle
x_0,y_0 \rangle, \langle x_1,y_1 \rangle \big\rangle : \langle
x_0,y_0 \rangle\neq \langle x_1,y_1 \rangle,
\frac{y_1-y_0}{x_1-x_0} = a \in \mathbb{R}\cup\{\infty\} \,
\big\}$.  (We take $\dfrac{y_1-y_0}{x_1-x_0}= \infty$ when
$y_1\neq y_0 \meet x_1=x_0$.)  So two points in the plane are
related via $a$ iff they are connected by a line of slope $a$.
Clearly, the (representations of) the atoms are disjoint and their
union is all of $\mathbb{R}^2 \times \mathbb{R}^2$.  It is easy to
see that conversion is identification.  To see that ; works
properly, suppose we have $\alpha, \beta \in \mathbb{R}^2$, and
$\langle \alpha, \beta \rangle \in a;a$.  Then either $\alpha =
\beta$ or $\alpha$ and $\beta$ are connected by a line of slope
$a$.  So $\langle \alpha, \beta \rangle \in a$, and $a;a = \1 +
a$.  If $a,b$ are diversity atoms, $a\neq b$, and $\langle \alpha,
\beta \rangle \in a;b$, then there is some point $\gamma$ in the
plane so that $\alpha$ is connected to $\gamma$ by a line of slope
$a$ and $\gamma$ is connected to $\beta$ by a line of slope $b$.
We cannot have $\alpha = \beta$; also it is clear that $\alpha$
and $\beta$ could be connected by some $\gamma$ given any $c;d$.
So $a;b = \0 \cdot \overline{a+b}$ as desired.

Thus we have represented the atoms.  By the completeness (in the
boolean sense) of $\RR(\mathbb{R}^2)$, we represent any $x\in\EE$
as the join of the (representations of) the atoms below it.
\qedhere
\end{proof}

\begin{thm} \textsf{RRA} is not finitely axiomatizable in
first-order logic.  \end{thm}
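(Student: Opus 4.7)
The plan is to argue by contradiction using the ultraproduct machinery that has been carefully assembled in this section. Suppose toward contradiction that $\textsf{RRA}$ is finitely axiomatizable in first-order logic. Then it is axiomatizable by a single sentence $\varphi$ (take the conjunction of a finite axiomatization), so for every similar algebra $\A$, we have $\A \in \textsf{RRA}$ iff $\A \models \varphi$.

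Next I would invoke the Bruck–Ryser theorem (Theorem 4.1.3) to fix an infinite set $I \subseteq \omega$ such that for every $n \in I$, no projective plane of order $n-1$ exists. By Lyndon's theorem (Theorem 4.1.2), this means $\E \notin \textsf{RRA}$ for every $n \in I$, and so $\E \not\models \varphi$ for every $n \in I$. Let $\F$ be any non-principal ultrafilter on $I$, and form the ultraproduct
\[
\prod_{n \in I} \E \bigg/ \F.
\]

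Now I would combine the lemmas just proved. By Lemma 4.2.3, this ultraproduct embeds into $\EE$, and by Lemma 4.2.4, $\EE$ is representable. Since $\textsf{RRA}$ is closed under $\ss$ (Theorem 3.1.1), the ultraproduct lies in $\textsf{RRA}$, and therefore satisfies $\varphi$. But by Los' lemma, $\prod_{n \in I} \E / \F \models \varphi$ implies $\{ n \in I : \E \models \varphi \} \in \F$. This set is empty by choice of $I$, and $\emptyset \notin \F$, a contradiction.

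The main obstacle has, in fact, already been handled by the preceding machinery: the real content lies in producing the ``gap'' algebras $\E$ that are non-representable but whose ultraproduct becomes representable, which rests on the deep Bruck–Ryser theorem together with Lyndon's geometric characterization and the explicit slope-representation of $\EE$ over $\mathbb{R}^2$. Given those three ingredients and Los' lemma, the theorem falls out essentially automatically from the classical ultraproduct criterion for non-finite-axiomatizability: a class closed under ultraproducts cannot be finitely axiomatized if it contains the ultraproduct of algebras each lying outside it.
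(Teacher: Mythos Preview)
Your proposal is correct and follows essentially the same route as the paper: use Bruck--Ryser and Lyndon to produce infinitely many non-representable $\E$, embed their non-principal ultraproduct into $\EE$, invoke the representability of $\EE$ and closure of \textsf{RRA} under $\ss$, and derive a contradiction via \L o\'s. The only cosmetic difference is that the paper phrases the final step as ``$\textsf{RRA}^c$ would be axiomatized by $\neg\varphi$ and hence closed under ultraproducts,'' whereas you apply \L o\'s directly to $\varphi$; these are equivalent formulations of the same argument.
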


\begin{proof}  Let $I= \{n\in\omega : n \geq 5 \text{ and there is
no projective plane of order $n-1$} \}$.  By \cite{BR}, $I$ is
infinite. By 4.1.2, $\{ \E \}_{n\in I}$ is a collection of
 finite non-representable relation algebras of unbounded sizes.
 By lemma 4.2.4, a non-principal ultraproduct $\prod_{n\in I} \E
 \big/ \F$ has an isomorphic copy inside $\EE$.  $\EE$ is
 representable, and then so is $\prod_{n\in I} \E
 \big/ \F$.  Let $\textsf{RRA}^c$ denote the complement of
 \textsf{RRA} relative to the class of all algebras of relational
 type.  If \textsf{RRA} were finitely axiomatizable, then it would
 be axiomatizable by some sentence $\varphi$.  (Just let $\varphi$
 be the conjunction of the finitely many axioms.)  In this case,
 $\textsf{RRA}^c$ would be axiomatized by $\neg\varphi$, hence
 closed under ultraproducts, since that construction preserves
 satisfaction of sentences.  Since $\textsf{RRA}^c$ is \emph{not}
 closed under ultraproducts, there is no such sentence $\varphi$
 and \text{RRA} is not finitely axiomatizable.
\end{proof}

\bigskip
%%%%%%%%%%%%%%%%%%%%%%%%%%%%%%%%%%%%%%%%%%%%%%%%%%%%%%%%%%%%%%%%%%%%%%%%%%%%%%%%%%%%%%%%%%%%%%%%%%%%%%%%%%%%
\section{Equational bases}

In this section we will show that any equational axiomatization of
\textsf{RRA} must be unbounded in the number of variables in the
equations. Consider $\E$ as defined previously.

\begin{lemma} $\A \subsetneq \E \Longrightarrow \A \in
\textsf{RRA}$.  \end{lemma}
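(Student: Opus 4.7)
The plan is to classify proper subalgebras up to isomorphism and then represent each by a careful choice of relations on $\mathbb{F}_q^2$. First I would observe that every subalgebra $\A\subseteq\E$ is determined by a partition $\{A_1,\ldots,A_k\}$ of the diversity atoms of $\E$: the atoms of $\A$ are $\1$ together with the joins $a_i=\sum A_i$. Since conversion is trivial on $\E$, only closure under $;$ needs to be checked, and this follows directly from the multiplication formula for $\gamma=\{1,3\}$. Hence $\A\subsetneq\E$ iff at least one block has size $\geq 2$.

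Next I would compute the multiplication of $\A$ and observe that it depends only on whether each block is a singleton. Writing $j\geq 1$ for the number of non-singleton blocks (re-labelling their atoms $a_1,\ldots,a_j$) and $l\geq 0$ for the number of singleton blocks ($b_1,\ldots,b_l$), the table becomes
\[
 a_i;a_i=1,\quad b_p;b_p=\1+b_p,\quad a_i;a_m=\0,\quad a_i;b_p=\0-b_p,\quad b_p;b_q=\0-b_p-b_q
\]
for $i\neq m$ and $p\neq q$. So the isomorphism type of $\A$ is determined by $(j,l)$, and it suffices to represent each abstract algebra $\A_{j,l}$ with $j\geq 1$.

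Then I would represent $\A_{j,l}$ inside $\RR(\mathbb{F}_q^2)$ for a prime power $q\geq\max(3,l+2j-1)$. There are $q+1$ lines through the origin in $\mathbb{F}_q^2$; I would select $l$ of them $L_1,\ldots,L_l$ to play the role of the $b_p$'s and partition the remaining $q+1-l$ lines into $j$ sets $C_1,\ldots,C_j$, each containing at least two lines. Defining
\[
 b_p := \{(x,y) : x-y\in L_p\setminus\{0\}\},\quad a_i := \{(x,y) : x-y\in (\bigcup C_i)\setminus\{0\}\}
\]
gives symmetric relations which, together with $\Id$, partition $(\mathbb{F}_q^2)^2$.

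The hard part will be verifying the multiplication table, which reduces to the two-dimensional linear-algebra facts $L\cap L'=\{0\}$ and $L+L'=\mathbb{F}_q^2$ for any two distinct lines through the origin. For example, $b_p;b_q=\0-b_p-b_q$ holds because a nonzero $v$ can be written as $c_p+c_q$ with $c_p\in L_p\setminus\{0\}$ and $c_q\in L_q\setminus\{0\}$ iff $v$ has nonzero coordinates in the basis $(L_p,L_q)$, i.e.\ iff $v\notin L_p\cup L_q$; the identity $a_i;a_i=1$ uses that each $C_i$ contains two lines whose sum spans $\mathbb{F}_q^2$; and the bound $q\geq 3$ avoids the characteristic-two collision $c+c=0$. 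The remaining identities $a_i;a_m=\0$ and $a_i;b_p=\0-b_p$ follow from analogous case analyses, the bookkeeping keyed to which line of $\mathbb{F}_q^2$ contains $x-y$.
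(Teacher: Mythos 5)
Your proof is correct, but it takes a genuinely different route from the paper's. The paper does not construct a representation at all: it notes that a proper subalgebra must fuse at least two diversity atoms into a single ``big'' atom $a$ satisfying $a;a=1$ and $a;a_j=\0\cdot\bar{a}_j$, and then transports $\A$ isomorphically onto a subalgebra of $\Ek$ for a larger $k$ chosen so that $\Ek$ is representable (available via Lyndon's Theorem 4.1.2 together with the existence of projective planes of prime-power order); representability of $\A$ then follows from closure of $\textsf{RRA}$ under $\ss$. You instead classify all proper subalgebras up to isomorphism by the pair $(j,l)$ and build the representation directly over the affine plane $\mathbb{F}_q^2$, in the same spirit as the paper's representation of $\EE$ over $\mathbb{R}^2$ in Lemma 4.2.5 (points related by the slope of the line joining them). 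What your route buys is self-containedness---no appeal to Lyndon's projective-geometry theorem, only linear algebra over $\mathbb{F}_q$---and completeness: the paper's written argument treats only the case of one fused atom that is a join of exactly two atoms of $\E$, leaving the general partition case implicit, whereas your partition analysis handles every proper subalgebra uniformly. What the paper's route buys is brevity, since the only thing it must check is that the big atom's one-line multiplication rule is reproduced by a big atom of a representable $\Ek$. Two small points to tighten: the parenthetical about the ``characteristic-two collision'' slightly misidentifies the constraint---what is really needed is $|L\setminus\{0,v\}|=q-2\geq 1$, i.e.\ $q\geq 3$, so that every nonzero $v\in L$ is a sum of two nonzero elements of $L$, giving $R_L|R_L=\Id\cup R_L$---and you should say explicitly that $\1\mapsto\Id$ and that the images of distinct atoms are nonempty and pairwise disjoint, so that the atom-level correspondence extends to an embedding of the whole (finite) boolean algebra into $\RR(\mathbb{F}_q^2)$.
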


\begin{proof} Since $\A$ is finite, $\A$ is atomic.  Let $a_1, a_2,
\ldots ,a_n$ be the diversity atoms of $\E$.  At least one atom of
$\A$ must be the join of at least two atoms of $\E$. (If not, then
$\A = \E$.)  Suppose this atom is $(a_1+a_2)\in \text{At}\, \A$.
This atom is ``big."  Give $a_1+a_2$ the name $a$.  Then $a$
satisfies $$a;a=1  \quad\text{and}\quad a;a_j=\0\cdot\overline{a}_j
\ \ \text{for } j>2$$  Now choose $k>n+1$ so that $\Ek$ is
representable.  Let $b_1, \ldots, b_k$ be the diversity atoms of
$\Ek$.  Define $b=b_1+\ldots+b_{k-n+1}$.  Then we have $b;b=1
\quad\text{and}\quad b;b_j=\0\cdot\overline{b}_j \ \ \text{for }
j>k-n+1$:

\begin{align*}
b;b &= (b_1+\ldots+b_{k-n+1});(b_1+\ldots+b_{k-n+1}) &&\text{by
def'n} \\
    &= \sum_{i,j\leq k-n+1} b_i;b_j &&\text{dist.}\\
    &\geq b_1;b_1 + b_1;b_2 + b_2;b_2 \\
    &= (b_1 + \1) + (\0\cdot\overline{b_1+b_2}) + (b_2 + \1) &&\text{def. of mult. in }\Ek \\
    &= \1 + b_1 + b_2 + \0\cdot\overline{b_1+b_2} \\
    &= 1 \\
\end{align*}
Then also we have for $j>k-n+1$

\begin{align*}
  b;b_j &= \sum_{i\leq k-n+1} b_i;b_j &&\text{dist.}\\
        &= \sum_{i,j\leq k-n+1} \0\cdot \overline{b_i+b_j} &&\text{def. of mult.}\\
        &= \0\cdot \bar{b}_j
\end{align*}

Thus the behavior of $a\in\E$ and $b\in\Ek$ in their respective
algebras is the same.  Thus the mapping
\begin{align*}
 a &\mapsto b \\
 a_j &\mapsto
b_{j+k-n} &&\text{for } j>k-n+1
\end{align*}
on the atoms of $\E$ establishes an isomorphism from $\A$ to the
subalgebra of $\Ek$ with atoms $b, b_{k-n+1}, \ldots, b_k$.  Since
$\Ek$ is representable, so also are its subalgebras, and hence $\A$
is representable also.

\end{proof}

Now if we take $I=\{n\in\omega : n\geq 5 \text{ and there is no
projective plane of order } n-1 \}$, then $\{\E\}_{n\in I}$ is a set
of arbitrarily large finite non-representable relation algebras all
of whose proper subalgebras are representable.  The existence of
such a set gives us the following.

\begin{thm}[J\'{o}nsson '91; Tarski '74] \textsf{RRA} has no
$n$-variable equational basis for $n < \omega$. \end{thm}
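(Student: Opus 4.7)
The plan is to argue by contradiction. Suppose $\textsf{RRA}$ were axiomatized by a set $\Sigma$ of equations in which every equation involves at most $n$ variables. By Theorem 4.1.2 combined with Bruck--Ryser (4.1.3), the set of integers $m$ for which $\underline{E}_{m+1}^{\{1,3\}} \notin \textsf{RRA}$ is infinite, so I can choose some $m$ with $m > 2^n$ and $\A := \underline{E}_{m+1}^{\{1,3\}} \notin \textsf{RRA}$. Since $\A$ fails to be representable, some equation $\varepsilon(x_1,\ldots,x_n) \in \Sigma$ fails under some assignment $x_i \mapsto a_i \in A$. Let $\B = \mathrm{Sg}^{\A}(\{a_1,\ldots,a_n\})$; the same assignment witnesses $\B \not\models \varepsilon$, so $\B \notin \textsf{RRA}$. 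By Lemma 4.3.1 every proper subalgebra of $\A$ is representable, so we must conclude $\B = \A$, i.e.\ that $n$ elements generate $\A$.

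The heart of the argument is a symmetry observation which contradicts this. Because $\A$ is finite and atomic, each generator $a_i$ is identified with a subset $S_i$ of the atom set $\mathrm{At}\,\A$. The family $S_1,\ldots,S_n$ induces a partition of $\mathrm{At}\,\A$ into at most $2^n$ blocks (two atoms in the same block iff they have identical membership patterns in the $S_i$). Restricting to the $m$ diversity atoms, we partition $m > 2^n$ elements into at most $2^n$ blocks, so by pigeonhole there are two distinct diversity atoms $b,b'$ in the same block. Now the transposition $\pi$ of $\mathrm{At}\,\A$ that swaps $b$ with $b'$ and fixes every other atom extends to a bijection of $\A$, and because the relative product in $\underline{E}_{m+1}^{\{1,3\}}$ is defined purely in terms of the set cardinalities $|\{a,b,c\}|$, every permutation of the atoms fixing $\1$ is an automorphism; hence $\pi$ is an automorphism of $\A$. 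Since $b,b'$ share a block, $\pi$ preserves each $S_i$ setwise and therefore fixes each $a_i$. Thus $\pi$ acts as the identity on $\mathrm{Sg}^{\A}(\{a_1,\ldots,a_n\}) = \A$, contradicting $\pi(b) = b' \neq b$.

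The main obstacle is this symmetry step: once one sees that $\underline{E}_{m+1}^{\{1,3\}}$ has an automorphism group acting richly on diversity atoms, the pigeonhole application and the reduction through Lemma 4.3.1 are straightforward. All the model-theoretic reasoning (extracting a failing equation, passing to the subalgebra generated by the witnessing tuple) is routine; what powers the result is the combination of the arbitrarily large non-representable algebras produced by Bruck--Ryser together with their full symmetry and the fact (Lemma 4.3.1) that all their proper subalgebras are representable.
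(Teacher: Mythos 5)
Your proof is correct, and it shares the paper's overall skeleton --- exploit the arbitrarily large non-representable algebras $\underline{E}_{m+1}^{\{1,3\}}$ furnished by Lyndon's theorem and Bruck--Ryser, together with Lemma 4.3.1 (every proper subalgebra of such an algebra is representable), to conclude that an $n$-variable equation cannot fail in the whole algebra without already failing in a proper, hence representable, subalgebra. Where you genuinely diverge is in the justification of the crucial properness step. The paper argues by cardinality: it chooses $k$ with $2^{2^n}<2^{k+1}$ and asserts ``from boolean-algebraic considerations'' that the subalgebra generated by $n$ elements has at most $2^{2^n}$ elements, hence is proper. That bound is immediate for the \emph{Boolean} subalgebra generated by the $x_i$, but the relation-algebraic subalgebra is also closed under $;$ and $\conv{}\,$, and in general closing under composition can enlarge a Boolean-generated subalgebra (the free relation algebra on one generator is infinite); one must actually check that in $\underline{E}_{k+1}^{\{1,3\}}$ composition of elements of the Boolean-generated subalgebra lands back inside it, or settle for some larger finite bound. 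Your automorphism argument sidesteps this entirely: you never bound $|\B|$ at all, but instead show directly that the whole algebra cannot be generated by $n$ elements, because a transposition of two diversity atoms lying in the same block of the membership-pattern partition (which exists by pigeonhole once $m>2^n$) is a nonidentity automorphism fixing all the generators, and the fixed-point set of an automorphism is a subalgebra. Each step of that argument checks out --- the multiplication table of $\underline{E}_{m+1}^{\{1,3\}}$ depends only on the cardinalities $|\{a,b,c\}|$, so every permutation of the diversity atoms is indeed an automorphism. The trade-off: the paper's route is shorter and gives an explicit size threshold for $k$ in terms of $n$, while yours is more robust (it would survive in any family of finite algebras with a sufficiently transitive automorphism group on atoms, without any computation of generated subalgebras) and fills in the one point the paper leaves to the reader's faith.
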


A proof of this theorem first appeared in print in \cite{Jon91}, but
was known to Tarski previously--he mentioned it in a taped lecture
in 1974.

\begin{proof}
Let $\Sigma$ be a set of equations with no more than $n$ variables.
Suppose $\textsf{RRA}\models\Sigma$.  Choose $k \in I$ so large that
$2^{2^n} < 2^{k+1}$.  Then $\Ek \notin\textsf{RRA}, \ |\Ek | =
2^{k+1}$.  Take $\varepsilon\in\Sigma$.  Choose $x_1 ,\ldots , x_n
\in \Ek$.  Let $\B=\text{Sg}^{\Ek}(x_1 ,\ldots , x_n)$. Since $\B$
is generated by $n$ elements, we know from boolean-algebraic
considerations that $|B|\leq 2^{2^n} < 2^{k+1} = |\Ek|$, and so $\B
\subsetneq \Ek$. Hence $\B\in \textsf{RRA}$. Thus $\B \models
\varepsilon \, [x_1 ,\ldots , x_n]$.\footnote{$\B \models
\varepsilon \, [x_1 ,\ldots , x_n]$ means that the equation is
satisfied when $x_1$ through $x_n$ are substituted into it.} But
since $\B\subseteq \Ek$, $\Ek \models \varepsilon \, [x_1 ,\ldots ,
x_n]$. $x_1 ,\ldots , x_n$ were arbitrary, so we get $\Ek \models
\varepsilon$, but $\Ek \notin \textsf{RRA}$.  Hence $\Sigma$ does
not axiomatize \textsf{RRA}. \qedhere
\end{proof}

\noindent\textbf{Open Question:}  Is there an $n$-variable
first-order axiomatization of \textsf{RRA}?

\noindent\textbf{Guess:}  This seems unlikely.

For more on this problem see \cite{HH}, chapter 21.

%auto-ignore
% Appendix1 file from standard thesis template
%\appendixtitle
%\appendix
\specialchapt{Appendix: A proof of Birkhoff's Theorem}

\renewcommand{\theenumii}{\arabic{enumii}}
\renewcommand{\labelenumii}{(\theenumii.)}

Birkhoff's Theorem says that varieties are defined by equations and
conversely. (See \cite{Birk}) First we must say what exactly we mean
by ``equations." An equation is a pair of terms: so
$f(x_1,x_2)=g(x_3,h(x_4))$ is associated with $\langle f(x_1,x_2) ,
g(x_3,h(x_4)) \rangle$.  A term will be an element of the absolutely
free algebra of type $\rho$ with generating set $\omega$, denoted by
$\underline{\mathrm{Fr}}_\omega^\rho$. The elements of $\omega$ are
the variables.  An assignment of variables is a function
$f:\omega\to \underline{A}$ for some algebra $\underline{A}$ with
type $\rho$. Any such $f$ extends to a homomorphism
$\hat{f}:\underline{\mathrm{Fr}}_\omega^\rho \to \underline{A}$
which is an assignment of terms to elements of the algebra
$\underline{A}$. To say that an equation $\varepsilon$ is valid in
an algebra is to say that given any assignment of the variables
$f:\omega\to \underline{A}$, $\hat{f}$ sends both sides of the
equation $\varepsilon$ to the same element of the algebra. More
precisely, for $\varepsilon = \langle \varepsilon_0 , \varepsilon_1
\rangle \in \underline{\mathrm{Fr}}_\omega^\rho \times
\underline{\mathrm{Fr}}_\omega^\rho$\,, \  $\underline{A} \models
\varepsilon$ if for all homomorphisms
$h:\underline{\mathrm{Fr}}_\omega^\rho \to \underline{A}$,
$h(\varepsilon_0) = h(\varepsilon_1)$.

\begin{thm}
Let $\Sigma$ be a set of equations ($\Sigma \subset
\mathrm{Fr}_\omega^\rho \times \mathrm{Fr}_\omega^\rho$). Let $\K
= \{ \underline{A} \, : \, \underline{A} \models \Sigma \}$.  Then
$\K= \hh\K = \ss\K = \pp\K$.
\end{thm}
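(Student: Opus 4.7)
The plan is to prove the theorem by establishing three non-trivial inclusions: $\ss\K\subseteq\K$, $\hh\K\subseteq\K$, and $\pp\K\subseteq\K$. The reverse inclusions $\K\subseteq\ss\K$, $\K\subseteq\hh\K$, and $\K\subseteq\pp\K$ are automatic, since every algebra is (isomorphic to) a subalgebra of itself, a homomorphic image of itself under the identity, and a product with a single factor. Throughout I will use the fact, stated in the definition preceding the theorem, that $\A\models\varepsilon$ iff every homomorphism $h:\Fr\to\A$ satisfies $h(\varepsilon_0)=h(\varepsilon_1)$. Fix an equation $\varepsilon=\langle\varepsilon_0,\varepsilon_1\rangle\in\Sigma$.

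For $\ss\K\subseteq\K$, suppose $\B\in\ss\K$, so (up to isomorphism) $\B\subseteq\A$ for some $\A\in\K$. Any homomorphism $h:\Fr\to\B$ composes with the inclusion to give a homomorphism $h':\Fr\to\A$ taking the same values on terms. Since $\A\models\varepsilon$, we have $h(\varepsilon_0)=h'(\varepsilon_0)=h'(\varepsilon_1)=h(\varepsilon_1)$, so $\B\models\varepsilon$. For $\pp\K\subseteq\K$, suppose $\B\iso\prod_{i\in I}\A_i$ with each $\A_i\in\K$. A homomorphism $h:\Fr\to\prod\A_i$ is determined coordinate-wise by homomorphisms $h_i=\pi_i\circ h:\Fr\to\A_i$; since each $\A_i\models\varepsilon$, we have $h_i(\varepsilon_0)=h_i(\varepsilon_1)$ for every $i$, and hence $h(\varepsilon_0)=h(\varepsilon_1)$. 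So $\B\models\varepsilon$.

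For $\hh\K\subseteq\K$, suppose $\B\in\hh\K$ with surjective $g:\A\twoheadrightarrow\B$ and $\A\in\K$. Let $h:\Fr\to\B$ be any homomorphism; I must show $h(\varepsilon_0)=h(\varepsilon_1)$. Using surjectivity of $g$, for each variable $n\in\omega$ pick some $a_n\in A$ with $g(a_n)=h(n)$, and let $f:\omega\to A$ be the assignment $n\mapsto a_n$. Its unique extension $\hat{f}:\Fr\to\A$ is a homomorphism, and by construction $g\circ\hat{f}$ agrees with $h$ on the generators $\omega$; since both are homomorphisms out of the absolutely free algebra, they must agree everywhere. Since $\A\models\varepsilon$, $\hat{f}(\varepsilon_0)=\hat{f}(\varepsilon_1)$, and applying $g$ gives $h(\varepsilon_0)=h(\varepsilon_1)$.

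Combining the three inclusions with the automatic reverse ones yields $\K=\hh\K=\ss\K=\pp\K$. The only step requiring real care is the $\hh$-closure argument: one must invoke surjectivity of $g$ to lift each variable assignment in $\B$ to an assignment in $\A$, and then use the universal property of $\Fr$ (that homomorphisms out of it are determined by their values on $\omega$) to conclude that $g\circ\hat{f}=h$. The other two inclusions follow almost directly from the coordinate-wise/restricted nature of operations in subalgebras and products.
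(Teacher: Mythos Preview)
Your proof is correct and follows essentially the same approach as the paper: for each of $\ss$, $\pp$, $\hh$ you show that an arbitrary homomorphism $h:\Fr\to\B$ can be related to a homomorphism into an algebra already in $\K$ (via inclusion, projection, or a lifted assignment through the surjection), and then use $\A\models\varepsilon$ to conclude $h(\varepsilon_0)=h(\varepsilon_1)$. The $\hh$ case in particular matches the paper's argument exactly, including the use of surjectivity to lift variables and the universal property of $\Fr$ to identify $g\circ\hat f$ with $h$.
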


\begin{proof}[Proof] Let $\varepsilon = \langle \varepsilon_0,
\varepsilon_1 \rangle \in \Sigma$. \\

\begin{enumerate}

\item First we show $\K=\ss\K$.  (It is only
necessary to show $\K\supseteq\ss\K$.)\\

Let $\A\in\K$, and $\B\subseteq\A$.  Let $\psi :\Fr \to \B$ be a
homomorphism. Let $\varphi$ be the inclusion
$\B\hookrightarrow\A$.

\[
\begin{diagram}
\node{\Fr} \arrow{s,l}{\psi} \arrow{se,t,..}{\varphi \, \circ \, \psi} \\
\node{\B} \arrow{e,b,J}{\varphi} \node{\A}\\
\end{diagram}
\]

Then $\varphi\circ\psi$ is a homomorphism from $\Fr$ to $\A$.
Since $\A\models\varepsilon$, $\varphi\circ\psi(\varepsilon_0) =
\varphi\circ\psi(\varepsilon_1)$.  But $\varphi$ is the inclusion
map, so $\psi(\varepsilon_0) = \psi(\varepsilon_1)$, and
$\B\models\varepsilon$.  So $\B\in\K$.

%%%%%%%%%%%%%%%%%%%%%%%%%%%%%%%%%%%%%%%%%%%%%%%%%%%%%%%%%%%%%%%%%%%%%%%%%%%%%%%%%%%%%%%%%%%%%%%%%%%%%%%%%%%%%%%%%%%%%%%%%%%

\item $\pp\K=\K$.\\

$\forall j \in J$, let $\A_j \in \K$.  Consider $\prod \A_j = \{
\varphi: J \to \cup A_j \, \big| \, \varphi(j) \in A_j \}$.  Let
$\psi :
\Fr \to \prod \A_j$.  \\

$\forall j\in J \ \exists \psi_j = \pi_j \circ \psi$, where
$\pi_j$ is the projection homomorphism onto $\A_j$.

\[
\begin{diagram}
\node{\Fr} \arrow{s,l}{\psi} \arrow{se,t,..}{\psi_j} \\
\node{\Pi \A_j} \arrow{e,b}{\pi_j} \node{\A_j}\\
\end{diagram}
\]

Now for $\varepsilon = \langle \varepsilon_0, \varepsilon_1
\rangle$, let $\psi(\varepsilon_0)= \varphi_0:J\to \cup A_j$.  By
the diagram, $\varphi_0$ is given by $j
\overset{\varphi_0}{\longmapsto} \psi_j(\varepsilon_0) $.
Similarly, $\psi_1(\varepsilon_1)
 = j \overset{\varphi_1}{\longmapsto} \psi_j(\varepsilon_1)$.\\

 Now we know that for all $j\in J$, $\psi_j(\varepsilon_0) =
 \psi_j(\varepsilon_1)$, since $\varepsilon$ holds in each $\A_j$.
  Thus $\varphi_0$ and $\varphi_1$ agree at each $j\in J$, and so
  $\varphi_0=\varphi_1$, which means that
  $\psi(\varepsilon_0)=\psi(\varepsilon_1)$, and so $\prod \A_j
  \models \varepsilon$.

%%%%%%%%%%%%%%%%%%%%%%%%%%%%%%%%%%%%%%%%%%%%%%%%%%%%%%%%%%%%%%%%%%%%%%%%%%%%%%%%%%%%%%%%%%%%%%%%%%%%%%%%%%%%%%%%

\item $\hh\K=\K$.\\

Let $\A \in \K$, and let $\varphi: \A \xrightarrow{\text{onto}}
\B$, so $\B \in \mathsf{HK}$.  Let $\varepsilon\in\Sigma$,
$\A\models\varepsilon$.  Thus for all hom's $\psi:\Fr \to \A$,
$\psi(\varepsilon_0)=\psi(\varepsilon_1)$,
$\varepsilon=\langle\varepsilon_0, \varepsilon_1 \rangle$.  \\

Let $\psi:\Fr \to \B$.  We show
$\psi(\varepsilon_0)=\psi(\varepsilon_1)$.

\[
\begin{diagram}
\node{\Fr} \arrow{s,l}{\psi}  \\
\node{\B}  \node{\A} \arrow{w,b,A}{\varphi} \\
\end{diagram}
\]

Now we define an assignment of variables $f:\omega\to\A$ by

\[
n \in \omega \overset{f}{\longmapsto} \text{some} \ a \in
\varphi^{-1}\big[ \psi(n) \big]
\]

(So $f$ is a choice function from $\omega$ to $\underset{n
\in\omega}{\bigcup} \varphi^{-1}\big[ \psi(n) \big] \subseteq A$.)
Then $f$ extends to a hom $\hat{f}:\Fr\to\A$, and $\varphi \circ
f(n) = \psi(n)$ for $n \in \omega$, so $\varphi \circ f = \psi
\big|_\omega$.

\[
\begin{diagram}
\node{\Fr}  \arrow{se,t,..}{\hat{f}} \\
\node{\omega} \arrow{n,l,L}{} \arrow{e,b}{f} \node{\A}  \\
\end{diagram}
\]

 Therefore $\varphi \circ \hat{f} = \psi$ since they
agree on the generating set of $\Fr$. Thus the following diagram
commutes:

\[
\begin{diagram}
\node{\Fr} \arrow{s,l}{\psi} \arrow{se,t,..}{\hat{f}} \\
\node{\B}  \node{\A} \arrow{w,b,A}{\varphi} \\
\end{diagram}
\]

Then $\psi(\varepsilon_0) = \varphi \circ \hat{f}(\varepsilon_0) =
\varphi \circ \hat{f}(\varepsilon_1) = \psi(\varepsilon_1)$.  Then
$\varepsilon$ is valid in $\B$ as well, and $\B \in \K$.

\end{enumerate}
\end{proof}

\begin{thm}[Birkhoff, '35]
Let $\mathsf{K}$ be a class of similar algebras (with similarity
type $\rho:I \to \omega$).  Suppose that $\K= \hh\K = \ss\K
=\pp\K$. Then $\K = \{ \underline{A} \, : \, \underline{A} \models
\mathrm{Eq}(\K)\}$, where $\mathrm{Eq}(\K)$ is the set of
equations true in $\K$.

\end{thm}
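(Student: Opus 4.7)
The inclusion $\K \subseteq \{\A : \A \models \mathrm{Eq}(\K)\}$ is immediate from the definition of $\mathrm{Eq}(\K)$. For the converse, suppose $\A \models \mathrm{Eq}(\K)$; the plan is to exhibit $\A$ as a homomorphic image of a member of $\ss\pp\K$, whence $\A \in \hh\ss\pp\K = \K$ by the hypothesis $\K = \hh\K = \ss\K = \pp\K$.

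First I would work with the absolutely free algebra $\Fra$ on a set of generators indexed by $A$, together with the surjection $\pi: \Fra \twoheadrightarrow \A$ that extends the identity on generators. Define
\[
\theta \;=\; \bigcap \bigl\{\ker h : \B \in \K,\ h:\Fra \longrightarrow \B\bigr\},
\]
which is a congruence on $\Fra$ (an intersection of congruences). By construction $\Fra/\theta$ embeds diagonally into the product $\prod_{\B,h} \Fra/\ker h$, and each factor is isomorphic to a subalgebra of some $\B \in \K$. Hence $\Fra/\theta \in \ss\pp\K = \K$, using the closure of $\K$ under $\ss$ and $\pp$ (together with the general inclusion $\pp\ss \subseteq \ss\pp$).

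The crux is then to show $\theta \subseteq \ker\pi$, for then $\pi$ factors as a surjection $\Fra/\theta \twoheadrightarrow \A$, placing $\A$ in $\hh\K = \K$. Suppose $(s,t) \in \theta$. Since each term involves only finitely many variables, both $s$ and $t$ lie in the subalgebra of $\Fra$ generated by some finite $Y \subseteq A$; after renaming we may view $(s,t)$ as an equation $\varepsilon$ in the ``official'' free algebra $\Fr$. The condition $(s,t)\in\theta$ says exactly that every homomorphism from $\Fr$ into any $\B\in\K$ equates $s$ and $t$, i.e.\ that $\varepsilon \in \mathrm{Eq}(\K)$. By hypothesis $\A \models \varepsilon$; specializing to the assignment of variables that reproduces the inclusion $Y \hookrightarrow A$ then yields $\pi(s) = \pi(t)$, as desired.

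The main obstacle I anticipate is bookkeeping: $\mathrm{Eq}(\K)$ is built from the countable $\Fr$, whereas the construction above naturally lives over $\Fra$, whose generating set may be uncountable. The remedy is the finiteness of terms---any single pair in $\theta$ involves only finitely many generators and can therefore be pulled back to $\Fr$ after a benign renaming---and one must check that this renaming is compatible with both membership in $\mathrm{Eq}(\K)$ and the evaluation under $\pi$. Once that is dispatched, everything else is a diagram chase using the same universal property of $\Fra$ already invoked in the $\hh\K = \K$ direction of the previous theorem.
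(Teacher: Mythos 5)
Your proposal is correct and takes essentially the same route as the paper: both build the absolutely free algebra $\Fra$ on the underlying set of $\A$, realize its quotient by the intersection of all congruences with quotient in $\K$ as a member of $\ss\pp\K$, and use the finiteness of terms to pull any pair in that congruence back to an equation in $\mathrm{Eq}(\K)$, which $\A$ satisfies, so that the evaluation map factors through the quotient. The only differences are cosmetic --- you package the paper's relation $h^{-1}|\,g$ as a factorization of $\pi$ through $\Fra/\theta$ --- plus one bookkeeping point: index your product by the \emph{set} of kernels (a subset of $\mathrm{Con}(\Fra)$) rather than by the proper class of all pairs $(\B,h)$, exactly as the paper indexes by congruences.
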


%%%%%%%%%%%%%%%%%%%%%%%%%%%%%%%%%%%%%%%%%%%%%%%%%%%%%%%%%%%%%%%%%%%%%%%%%%%%

\begin{proof}[Proof]

Let $\Sigma = $Eq$(\K)$.  Let $\A \models \Sigma$.  Show
$\A\in\K$. Thus $\K \supseteq \{\A \, : \, \A\models\Sigma\}$, and
$\K = \{\A \, : \, \A\models\Sigma\}$, since clearly $\K \subseteq
\{\A \, : \, \A\models\Sigma\}$.

\begin{enumerate}

\item Let $\A \models \Sigma$.

\item Construct $\Fra$, the absolutely free algebra generated by
$\A$.

\item Let $I= \{ C \in
\text{Con}(\Fra)\footnote{For any algebra $\A$, Con($\A$) is the set
of congruences on $\A$.} \, : \, \Fra / C \in \K \}$. $I\neq
\emptyset$, since $\Fra \times \Fra$ is a congruence, and so $\Fra
/(\Fra \times \Fra) \iso \underline{1} \in \K$, since $\K$ is closed
under $\hh$.

\item Let $\ds \B = \prod_{C \in I} \Fra / C \in \pp\K=\K$.

\item Let $f:\A\to\B$, $a \overset{f}{\longmapsto} \langle a/C :
C\in I \rangle$.
 Let $\ess = \text{Sg}^{\B} \{ \langle a/C : C\in I \rangle \in \B
\, : \, a \in \A \}$.    So $\ess$ is the subalgebra of $\B$ that
is generated by the range of $f$. We can also write $f:\A \to
\ess$.

\item  $f$ extends to a homomorphism $h:\Fra \to \ess$, $f
\subseteq h$.

\item Note:  $\ess \in \ss\pp\K=\K$.  We want a homomorphism from
$\ess$ onto $\A$, so that $\A \in \hh\ss\pp\K=\K$.

\item Let $g:\Fra \to \A$ be the homomorphism that extends
$\text{Id}_A$.  Then consider

\[
\begin{diagram}
\node{\Fra} \arrow{s,l,A}{g} \arrow{e,t,A}{h} \node{\ess} \arrow{sw,b,..,-}{h^{-1}\big|g} \\
  \node{\A}  \\
\end{diagram}
\]

Note that $h$ is in fact onto $\ess$:  $f$ maps onto the
generating set for $\ess$, so $h$ maps onto the generated
subalgebra.

\item $\hg$ is a function.\\

First we prove a little lemma:  $\forall x \in \Fra$, $h(x) =
\langle x/C : C\in I \rangle$.\\

Proof:  show set of elements with this property is a subalgebra.
$\supseteq
\A$.\\

Base Case: $x\in\A \Rightarrow h(x) = f(x) = \langle x/C : C\in I
\rangle$. \\

Inductive Case:  Assume for $x,y \in \Fra$, $h(x) = \langle x/C :
C\in I \rangle$ and $h(y) = \langle y/C : C\in I \rangle$. Without
loss of generality we consider a \emph{binary} function symbol
$\beta$.

\begin{align*}
 h(\beta(x,y)) &= \beta(h(x),h(y))  &&\text{\footnotesize(h is a hom.)} \\
     &= \beta(\langle x/C : C\in I \rangle, \langle y/C : C\in I
     \rangle) &&\text{\footnotesize(by inductive hyp.)}\\
     &= \langle \beta(x/C, y/C) : C\in I \rangle  &&\text{\footnotesize(by def. of op's in direct prod.)}\\
     &= \langle \beta(x, y)/C : C\in I \rangle &&\text{\footnotesize(by def. of op's in quotient
     alg.)}
\end{align*}

Thus $h(\beta(x,y)) = \langle \beta(x, y)/C : C\in I \rangle$.\\

\bigskip

Now we proceed to prove that $\hg$ is a function.  We want $x=y
\Rightarrow \hg(x) = \hg(y)$, i.e. $h(x)=h(y) \Rightarrow
g(x)=g(y)$.  \\

So suppose $h(x)=h(y)$.  $x$ and $y$ can be regarded as terms in
an equational language by associating $x=t^{\Fra} (a_1, \ldots,
a_k)$, $a_1, \ldots, a_n \in \A$ with $t^{\Fr} (n_1, \ldots,
n_k)$, $n_1, \ldots, n_k \in \omega$.  So we can say that $\langle
x,y \rangle $  is an equation.

By the previous lemma, we have $\langle x/C : C\in I\rangle = h(x)
= h(y) = \langle y/C : C\in I\rangle$, and so $\forall C\in I$,
$x/C = y/C$.  Thus for any congruence $C\in I$ on $\Fra$,
$x\sim_{_C} y$.  So then suppose we have a homomorphism $\varphi:
\Fra \twoheadrightarrow \underline{Z}$ for $\underline{Z} \in\K$.
$\varphi$ induces a congruence $C = \varphi \big| \varphi^{-1}$ on
$\Fra$ such that $\Fra/C \iso \underline{Z} \in \K$. Thus $C\in
I$, and hence $\varphi(x)=\varphi(y)$, since $x\sim_{_C} y$.  Thus
$\K\models \langle x,y \rangle$, and so $\langle x,y \rangle
\in\Sigma$. Since $\langle x,y \rangle \in\Sigma$, $\A \models
\langle x,y \rangle$.  Thus for any homomorphism from $\Fra$ to
$\A$, $x$ and $y$ map to the same element of $\A$.  Now $g:\Fra
\to \A$, so $g(x)=g(y)$. Hence $\hg$ is functional.

\item  $\hg$ is a homomorphism.  \\

Again, we work only with a \emph{binary} function symbol $\beta$.
We show $\hg(\beta(x,y)) = \beta(\hg(x),\hg(y))$, $x,y \in \ess$.

Consider the following diagram:

\[
\begin{diagram}
\node{\Fra} \arrow{s,l,A}{g} \arrow{e,t,A}{h} \node{\ess} \arrow{sw,b,A}{h^{-1}\big|g} \\
  \node{\A}  \\
\end{diagram}
\]

Note that h is surjective.  Then

\[
\exists \, t \in \Fra \quad h(t) = \beta(x,y)
\]

\[
\begin{diagram}
\node{t} \arrow{s,l,T}{g} \arrow{e,t,T}{h} \node{\beta(x,y)}  \\
  \node{\hg(\beta(x,y))}  \\
\end{diagram}
\]
\pagebreak[3]
\[
\exists\, t_x \in \Fra \quad h(t_x) = x \qquad\qquad \exists\, t_y
\in \Fra \quad h(t_y) = y
\]
\[
\begin{diagram}
\node{t_x} \arrow{s,l,T}{g} \arrow{e,t,T}{h} \node{x} \node{t_y} \arrow{s,l,T}{g} \arrow{e,t,T}{h} \node{y} \\
  \node{\hg(x)}  \node[2]{\hg(y)}\\
\end{diagram}
\]

Now note that $h(\beta(t_x,t_y)) = \beta(h(t_x),h(t_y)) =
\beta(x,y)$. \\

So then

\begin{align*}
\hg(\beta(x,y)) &= g(\beta(x,y)) &\text{\footnotesize(holds since $h(\beta(t_x,t_y)) = \beta(x,y)$)}\\
    &= \beta(g(t_x),g(t_y)) &\text{\footnotesize($g$ is a hom.)}\\
    &= \beta(\hg(x),\hg(y)) &\text{\footnotesize($h(t_x)=x$, $h(t_y) = y$)}
\end{align*}

Therefore $\hg$ is a homomorphism.

\end{enumerate}

Thus $\A = \hg[\ess] \in \ss\pp\K$, so $\A \in \hh\ss\pp\K = \K$.
So $\A$ is in $\K$ and $\K$ is defined by equations. \qedhere

\end{proof}

%auto-ignore
% An example bibliography from the standard thesis template
%\renewcommand{\bibname}{\centerline{BIBLIOGRAPHY}}
\unappendixtitle
%\interlinepenalty=300
% For no page break use thebibnopage environment

%\include{acknowl}

\begin{thebibliography}{99}
\addcontentsline{toc}{chapter}{Bibliography}

\bibitem[Bir35]{Birk}
Birkhoff, Garrett. (1935) On the structure of abstract algebras.
\emph{Proc. Cambridge Philos. Soc.} 31. 433-454. \filbreak

\bibitem[BruRys49]{BR}
Bruck, R. H. and H. J. Ryser. (1949)  The nonexistence of certain
finite projective planes. \emph{Canadian J. Math.} 1. 88-93.
\filbreak

\bibitem[BurSan]{BS}
Burris, Stanley and H. P. Sankappanavar. (2000) \emph{A Course in
Universal Algebra.} Millenium Edition, available online at
http://www.thoralf.uwaterloo.ca/htdocs/UALG/univ-algebra.pdf.
Accessed November 24, 2004. \filbreak

\bibitem[ChaKei]{CK}
Chang, C. and H. Jerome Keisler.  (1973) \emph{Model Theory.}
North-Holland, Amsterdam. \filbreak

\bibitem[HenMonTar]{CA}
Henkin, Leon, J. Donald Monk and Alfred Tarski.  (1971)
\emph{Cylindric Algebras Part I.} North-Holland, Amsterdam.
\filbreak

\bibitem[HirHod]{HH}
Hirsch, Robin and Ian Hodkinson. (2002) \emph{Relation Algebras by
Games.} North-Holland, Amsterdam. \filbreak

\bibitem[J\'{o}n59]{Jon59}
J\'{o}nsson, Bjarni. (1959) Representation of modular lattices and
of relation algebras. \emph{Trans. Amer. Math. Soc.}, 92, 449-464.
\filbreak

\bibitem[J\'{o}n91]{Jon91}
J\'{o}nsson, Bjarni. (1991) The Theory of Binary Relations.
 \emph{Algebraic Logic}. \emph{Colloq. Math. Soc. J\'{a}nos Bolyai},
54, North Holland, Amsterdam. 245-292. \filbreak

\bibitem[Kopp]{BA}
Koppelberg, Sabine.  (1989) \emph{Handbook of Boolean Algebras Vol.
I.} J. Donald Monk, ed. North-Holland, Amsterdam.  \filbreak

\bibitem[Lyn50]{Lyn50}
Lyndon, Roger C. (1950) The representation of relational algebras.
\emph{Ann. of Math.}, \emph{(2)} 51, 707-729. \filbreak

\bibitem[Lyn61]{Lyn61}
Lyndon, Roger C. (1961) Relation algebras and projective geometries.
\emph{Michigan Math J.} 8, 21-28.

\bibitem[Madd]{Madd}
Maddux, Roger.  To appear.  \emph{Relation Algebras.} \filbreak

\bibitem[Mor]{DM}
De Morgan, Augustus.  (1860) On the Syllogism IV.   Reprinted in
\emph{On the Syllogism.} (1966)  Yale University Press, New Haven.
\filbreak

\bibitem[Tar55]{Tar55}
Tarski, Alfred. (1955) Contributions to the theory of models III.
\emph{Nederl. Akad. Wetensch Proc.}, 58, 56-64. \filbreak



%%%%%%%%%%%%%%%%%%%%%%%%%%%%%%%%%%%%%%%%%%%%%%%%%%%%%%%%%%%%%%%%%%%%%%%%%%%%%%%

















\end{thebibliography}
\end{document}